\newtheorem{theorem}{Theorem}[section]
\newtheorem{corollary}[theorem]{Corollary}
\newtheorem{proposition}[theorem]{Proposition}
\theoremstyle{definition}
\newtheorem{definition}[theorem]{Definition}
\theoremstyle{remark}
\newtheorem*{remark}{Remark}
\numberwithin{equation}{section}
\title[Spectral concentration for damped waves]{The spectral concentration for damped waves on compact Anosov manifolds}
\author{Yulin Gong}
\address{Department of Mathematical Sciences, Tsinghua University, Beijing, China.}
\email{gongyl22@mails.tsinghua.edu.cn}
\date{}
\begin{document}
\maketitle
\begin{abstract}
  We study the spectral distribution of damped waves on compact Anosov manifolds. Sjöstrand \cite{SJ1} proved that the imaginary parts of the majority of the eigenvalues concentrate near the average of the damping function, see also Anantharaman \cite{AN2}. In this paper, we prove that the most of eigenvalues actually lie in certain regions with imaginary parts that approach the average logarithmically as the real parts tend to infinity. The proof relies on the moderate deviation principles for Anosov geodesic flows. As an application, we show the concentration of non-trivial zeros of twisted Selberg zeta functions in a logarithmic region asymptotically close to $\Re s=\frac{1}{2}$.
\end{abstract}
\section{Introduction}
Let $(M, g)$ be a smooth, connected, compact Riemannian manifold without boundary, and $a \in C^{\infty}(M,\mathbb{R})$\footnote{Here, we refer to $a$ as the damping function. However, we do not assume the sign of $a$ in this paper.}. We study the ``damped wave equation" given by
\begin{equation}\label{DWEeq}
\left(\partial_t^2 - \Delta + 2a(x) \partial_t\right) v = 0, \quad t \in \mathbb{R}, \quad x \in M.
\end{equation}
We are interested in the stationary solutions $v(t, x) = e^{-i t \tau} u(x)$ for some $\tau \in \mathbb{C}$. This implies that $u$ must satisfy
\begin{equation}\label{DWEspec}
\left(-\Delta - \tau^2 - 2i a \tau\right) u = 0.
\end{equation}
\noindent Equivalently, $\tau$ is an eigenvalue of the operator
\begin{equation}\label{DWEop}
\left(\begin{array}{cc}
0 & I \\
-\Delta & -2i a
\end{array}\right)\colon H^2(M)\times H^{1}(M) \subset H^1(M) \times L^2(M) \mapsto H^1(M) \times L^2(M).
\end{equation}
It is known that spectrum consists of a discrete sequence of eigenvaluesc$\{\tau_n\}$ in $\mathbb{C}$ with $\Im \tau_n$ bounded and $|\Re \tau_n| \rightarrow \infty$. The spectrum is symmetric with respect to the imaginary axis, i.e., $-\bar{\tau}$ is an eigenvalue if $\tau$ is; see, e.g., Lebeau \cite{LB96}.

Our main theorem shows the spectral concentration region for damped waves on compact Anosov manifolds (see Definition \ref{anosov}) as follows:
\begin{theorem}\label{concentrationforDWE}
  Assume that $M$ is a $d$-dimensional Anosov compact manifold, $a \in C^{\infty}(M, \mathbb{R})$, and $\{\tau_n\}$ is the spectrum of the operator \eqref{DWEop}. Let $\overline{a} = \frac{1}{\mathrm{Vol}(M)}\int_{M}a(x)\,d\mathrm{Vol}(x)$ be the average of the damping function $a(x)$ on $M$, there exists a constant $c(a,M) \in (0,\infty]$ which is defined in \eqref{eq82} such that for any $0<c<c(a, M)$, and any $0<\alpha<1$, we have
\begin{equation}\label{eq15}
\#\left\{n \ \middle|\ 0 \leq \Re \tau_n \leq \lambda, \ \left|\Im \tau_n + \overline{a}\right|\geq (\log \lambda)^{-\frac{1-\alpha}{2}} \right\}=\mathcal{O}\left(\frac{\lambda^{d}}{e^{c|\log \lambda|^{\alpha}}(\log \lambda)^{\alpha-1}}\right), \quad \lambda \to \infty.
\end{equation}
\end{theorem}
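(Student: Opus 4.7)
The plan is to combine the semiclassical reduction of \eqref{DWEspec} used by Sj\"{o}strand \cite{SJ1} and Anantharaman \cite{AN2} with the moderate deviation principle for time-averages along the Anosov geodesic flow, aligning a sub-Ehrenfest Egorov time $T_h \asymp \log\lambda$ with the concentration scale $\delta_\lambda = (\log\lambda)^{-(1-\alpha)/2}$.

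Set $h = 1/\lambda$ and $\tau = h^{-1}z$. Equation \eqref{DWEspec} becomes $(-h^2\Delta - z^2 - 2ihaz)u = 0$; for eigenvalues with $\Re\tau$ in a unit window $[\lambda',\lambda'+1] \subset [0,\lambda]$ the rescaling $z = 1 + h\zeta$ (so that $\Im\zeta = \Im\tau$) reduces the problem to counting eigenvalues $\zeta$ of the semiclassical non-self-adjoint operator $\tfrac{1}{2h}(-h^2\Delta - 1) - ia$, whose real principal symbol vanishes on the cosphere bundle $\Sigma = S^{\ast}M$ and whose subprincipal imaginary part is $-a\circ\pi$. I then apply the standard averaging procedure, based on Egorov's theorem, up to a sub-Ehrenfest time $T_h = c_0\log(1/h)$ with $c_0 < 1/(2\Lambda_{\max})$ ($\Lambda_{\max}$ the maximal Lyapunov exponent), conjugating the subprincipal part; this replaces $a\circ\pi$ by its Birkhoff time-average
\[
\langle a\rangle_{T_h}(\rho) \;=\; \frac{1}{T_h}\int_0^{T_h} a\bigl(\pi(\varphi^t\rho)\bigr)\,dt, \qquad \rho \in \Sigma,
\]
modulo an $O(h^{1-2c_0\Lambda_{\max}})$ operator remainder, while keeping the averaged symbol in the pseudodifferential class $S_{1/2,1/2}$.

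For each real-part window I then invoke a non-self-adjoint Weyl upper bound of Sj\"{o}strand type: smoothing the indicator of the set $\{\rho\in\Sigma : |\langle a\rangle_{T_h}(\rho) - \overline{a}| \geq \delta/2\}$ into a test symbol and feeding it into a trace/Grushin estimate yields
\[
\#\bigl\{n : \Re\tau_n \in [\lambda',\lambda'+1],\ |\Im\tau_n + \overline{a}| \geq \delta\bigr\} \;\leq\; C\lambda^{d-1}\,\mu_L\bigl\{\rho : |\langle a\rangle_{T_h}(\rho) - \overline{a}| \geq \delta/2\bigr\} + O(\lambda^{-\infty}).
\]
The moderate deviation principle for smooth observables on the Anosov geodesic flow, applied at the scales $T = T_h$ and $\delta = \delta_\lambda$ (which satisfy $T\delta^2 = c_0(\log\lambda)^\alpha \to \infty$ and $\delta\sqrt{T} \to 0$, placing us in the MDP regime), then gives
\[
\mu_L\bigl\{\rho : |\langle a\rangle_{T_h}(\rho) - \overline{a}| \geq \delta_\lambda/2\bigr\} \;\leq\; \exp\!\Bigl(-\tfrac{c_0(\log\lambda)^\alpha}{8\sigma^2(a)}\bigl(1+o(1)\bigr)\Bigr),
\]
where $\sigma^2(a) \in [0,\infty)$ is the asymptotic variance of $a\circ\pi$ along the flow; $\sigma^2(a) = 0$ exactly when $a-\overline{a}$ is a smooth flow-coboundary, in which case $c(a,M) = \infty$ and the estimate holds trivially. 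Summing over the $\sim\lambda$ unit real-part windows and absorbing the polynomial logarithmic loss from the smoothing of the indicator produces the bound \eqref{eq15}, with the threshold $c(a,M)$ in \eqref{eq82} identified as $c_0^\star/(8\sigma^2(a))$ for the sharp sub-Ehrenfest constant $c_0^\star$.

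The principal obstacle is to implement the non-self-adjoint Weyl bound uniformly up to the sub-Ehrenfest time $T_h = c_0\log\lambda$ while preserving the exponential MDP saving. Although $c_0 < 1/(2\Lambda_{\max})$ keeps $\langle a\rangle_{T_h}$ in the class $S_{1/2,1/2}$, pushing a precise counting bound through this rough symbol class requires a Sj\"{o}strand--Helffer holomorphic functional calculus (or an equivalent Grushin reduction), with error terms uniformly $O(h^N)$ across the $\sim\lambda$ spectral windows so as not to overwhelm the $e^{-c(\log\lambda)^\alpha}$ tail. Pinning down the precise constant $c(a,M)$ by matching the MDP rate to the Egorov-admissible $c_0^\star$ is the final delicate step.
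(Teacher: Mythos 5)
Your overall strategy is the right one---semiclassical reduction, conjugation/averaging up to a logarithmic Egorov time $T\asymp\log\lambda$, the moderate deviation principle applied at the matched scale $w\asymp(\log\lambda)^{-(1-\alpha)/2}$, and a sum over $\sim\lambda$ real-part windows---and in that sense you have correctly identified the skeleton of the paper's argument. However, there is a genuine gap at the step you label a ``non-self-adjoint Weyl upper bound of Sj\"ostrand type.'' The inequality
\[
\#\bigl\{\Re\tau_n \in [\lambda',\lambda'+1],\ |\Im\tau_n + \overline{a}| \geq \delta\bigr\}
\;\leq\;
C\lambda^{d-1}\,\mu_L\bigl\{\rho : |\langle a\rangle_{T_h}(\rho)-\overline{a}|\geq\delta/2\bigr\}
\;+\;O(\lambda^{-\infty})
\]
is not an available off-the-shelf estimate, and no ``trace/Grushin'' argument produces it in this clean form when $\delta=w(h)\to 0$. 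This conversion of a phase-space measure bound into an eigenvalue count at a \emph{shrinking} scale is precisely the content of Sections~3--5 of the paper: one builds a trace-class perturbation $\widetilde{\mathcal P}_T$ by truncating the averaged damping $q^T$ above the level $\overline{q}+\kappa w(h)$, proves a resolvent bound $\|(\widetilde{\mathcal P}_T - z)^{-1}\|=\mathcal{O}\bigl((w(h)h)^{-1}\bigr)$ on an $h$-dependent domain $\widetilde\Omega_h$ whose boundary is within $\mathcal{O}(w(h))$ of the concentration line, and then applies Jensen's formula to $\det(1+K(z))$ on a conformally straightened domain whose separation from $\Omega_h$ is itself $\mathcal{O}(w(h))$. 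A symptom that your black-boxed step is too optimistic: the paper's Jensen argument necessarily costs two extra factors of $1/w(h)$ (one from the resolvent, one from the shrinking distance to the boundary of the Jensen disk), which is what produces the $(\log\lambda)^{1-\alpha}$ factor in \eqref{eq15}; your intermediate bound has no such loss, so it is stronger than what the method actually yields, and you would have to insert these losses by hand anyway.

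Two quantitative remarks, which are not fatal but show your constant $c(a,M)$ is suboptimal relative to \eqref{eq82}. First, you use the one-sided Birkhoff average $\tfrac{1}{T}\int_0^T$, which forces $T<|\log h|/(2\Lambda_{\max})$ to stay in an $S_\delta$ class with $\delta<1/2$; the paper instead uses the \emph{symmetric} average $\langle q\rangle_T=\tfrac1T\int_{-T/2}^{T/2}$, which only grows like $e^{(T/2)\Lambda}$ and therefore admits $T$ up to $(1-4\epsilon)|\log h|/\Lambda$, a factor of $2$ better in the exponent. Second, thresholding the deviation set at $\delta/2$ rather than at $\kappa\delta$ with $\kappa\to 1$ costs a further factor of $4$ in the MDP rate. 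Together these give $c(a,M)=c_0^\star/(8\sigma^2(a))$ versus the paper's $c(q,M)=1/(2\Lambda_0\sigma_q^2)$; the symmetric average and the sharp truncation recover the missing factor of $8$. The remaining content---MDP at scale $a(T)=1/w(h)$, the role of the asymptotic variance $\sigma_q^2$, the coboundary case $\sigma_q^2=0\Rightarrow c(a,M)=\infty$, and the summation over windows---is correctly set up.
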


\begin{figure}
  \centering
 \begin{tikzpicture}
\begin{axis}[xmin=5,xmax=12,ymin=-0.4,ymax=0.05,
  axis lines = middle,
  ytick={},
  yticklabels={},
  xtick={},
  xticklabels={},
  xlabel = \(\Re \tau\),
  ylabel = {\(\Im \tau\)},
]
\addplot[
  domain=5.5:12,
  samples=1000,
  color=red]{-0.2};

\addplot[
  domain=5.5:12,
  samples=1000,
  color=blue]{ 0.7*(ln x)^(-5)-0.2};
\addplot[
    domain=5.5:12,
    samples=1000,
    color=blue]{ -0.7*(ln x)^(-5)-0.2};
    \addplot[
      domain=5.5:12,
      samples=1000,
      color=green]{ -0.14};
      \addplot[
        domain=5.5:12,
        samples=1000,
        color=green]{ -0.26};
    
\end{axis}
\end{tikzpicture}
  \caption{Spectral concentration region: The red line marks the average of the damping term, the blue line marks the boundary of a logarithmic region asymptotically approaching the average, and the green line marks the boundary of a horizontal strip around the average.}
  \label{CRFS}
\end{figure}
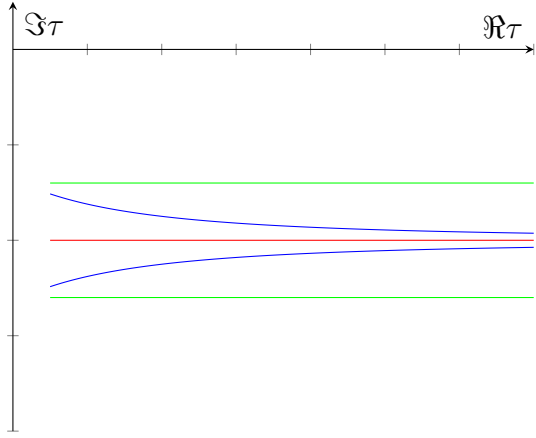
\noindent In our proof, we need the moderate deviation principles (MDP) for compact Anosov manifolds, see Theorem \ref{MDPA} in Section \ref{sec2}.

Recall that Markus and Matsaev \cite{MAMV} proved an analogue of Weyl law, which was also proved independently by Sjöstrand \cite{SJ1}:
\begin{theorem}[Markus and Matsaev \cite{MAMV}, Sjöstrand \cite{SJ1}]\label{renumber}
Let $(M, g)$ be a $d$-dimensional smooth, compact Riemannian manifold without boundary, and $a \in C^{\infty}(M, \mathbb{R})$. Then we have:
\begin{equation}\label{eq24}
\#\left\{n \ \middle|\ 0 \leq \Re \tau_n \leq \lambda\right\}=\left(\frac{\lambda}{2 \pi}\right)^d\left(\int_{|\xi|_{x}\leq 1} d x d \xi+\mathcal{O}\left(\lambda^{-1}\right)\right), \quad \lambda \to \infty.
\end{equation}
\end{theorem}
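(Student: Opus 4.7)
The plan is to reduce the eigenvalue count of the quadratic pencil $P(\tau) = -\Delta - \tau^2 - 2ia\tau$ to the classical Weyl law for $-\Delta$, exploiting that the damping term $-2ia\tau$ is of lower order in $\tau$ than the principal part $|\xi|^2 - \tau^2$. A preliminary step is to confine the count to a horizontal strip: pairing $P(\tau_n)u_n = 0$ with $u_n$ in $L^2$ and taking imaginary parts produces the relation $2\,\Re\tau_n\bigl(\Im\tau_n + (a u_n, u_n)\bigr) = 0$, which gives the uniform bound $|\Im\tau_n| \leq \|a\|_{L^\infty}$ for every $\tau_n$ with $\Re\tau_n \neq 0$. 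Hence every eigenvalue counted in \eqref{eq24} lies in the rectangle $R_\lambda := \{0 \leq \Re\tau \leq \lambda,\ |\Im\tau| \leq C\}$ for a constant $C$ independent of $\lambda$.

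In the undamped case $a \equiv 0$, the pencil factors and the spectrum of \eqref{DWEop} consists of the pairs $\pm\sqrt{\mu_k}$, where $\{\mu_k\}$ is the spectrum of $-\Delta$; Hörmander's Weyl law for $-\Delta$ then gives
\[
\#\{n : 0 \leq \Re\tau_n^{(0)} \leq \lambda\} \;=\; N_{-\Delta}(\lambda^2) \;=\; \left(\frac{\lambda}{2\pi}\right)^d \int_{|\xi|_x \leq 1} dx\,d\xi + \mathcal{O}(\lambda^{d-1}),
\]
which is exactly the right-hand side of \eqref{eq24}. It remains to show that turning on the damping alters this count by at most $\mathcal{O}(\lambda^{d-1})$.

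For the comparison, I would introduce the homotopy $a \mapsto t a$, $t \in [0,1]$, and track the eigenvalues of the family $P_t(\tau) = -\Delta - \tau^2 - 2ita\tau$. The strip estimate applied uniformly gives $|\Im\tau_n(t)| \leq C$ for all $t$, so as $t$ varies, the count inside $R_\lambda$ can only change when an eigenvalue crosses one of the short vertical sides $\Re\tau \in \{0,\lambda\}$. The variation of the count is therefore controlled by the number of eigenvalues of each $P_t$ that live in a vertical window $\{\Re\tau \in [\lambda - C_0, \lambda + C_0],\ |\Im\tau| \leq C\}$. The main technical obstacle is precisely to bound this local count by $\mathcal{O}(\lambda^{d-1})$, uniformly in $t \in [0,1]$; this is a non-self-adjoint Weyl remainder estimate for the pencil $P_t(\tau)$, established either by a semiclassical parametrix construction with the real principal symbol $|\xi|^2 - (\Re\tau)^2$ (the route taken by Sjöstrand \cite{SJ1}) or, equivalently, by Markus–Matsaev's operator-pencil machinery comparing the characteristic determinant of $P_t$ with that of $P_0$. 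Combining this boundary-flux bound with the homotopy argument and the exact Weyl count for $a = 0$ yields \eqref{eq24}.
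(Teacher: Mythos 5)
The paper offers no proof of Theorem \ref{renumber}: it is quoted from Markus--Matsaev \cite{MAMV} and Sj\"ostrand \cite{SJ1} and used as a black box, so your proposal can only be judged on its own terms. Your preliminary reductions are sound: the identity $2\,\Re\tau_n\bigl(\Im\tau_n+(au_n,u_n)\bigr)=0$ does confine all eigenvalues with $\Re\tau_n\neq 0$ to the strip $|\Im\tau_n|\leq\|a\|_{L^\infty}$, and for $a\equiv 0$ the count is exactly $N_{-\Delta}(\lambda^2)$, which H\"ormander's sharp Weyl law puts in the required form.

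The gap is in the gluing step. You bound the change of $N_t(R_\lambda)$ along the homotopy $t\mapsto ta$ by ``the number of eigenvalues of each $P_t$ in a vertical window near $\Re\tau=\lambda$,'' but the net change in the count is controlled by the total number of boundary crossings over $t\in[0,1]$, not by the instantaneous population of a neighborhood of the boundary. An eigenvalue branch $\tau_n(t)$ can oscillate across the line $\Re\tau=\lambda$ many times, so $\sup_t\#\{\text{eigenvalues of }P_t\text{ near }\Re\tau=\lambda\}=\mathcal{O}(\lambda^{d-1})$ does not imply $|N_1(R_\lambda)-N_0(R_\lambda)|=\mathcal{O}(\lambda^{d-1})$. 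The correct mechanism --- and the one actually used in \cite{MAMV} and \cite{SJ1} --- is not a crossing count but an argument-principle comparison at the endpoints only: one writes $N_1(R_\lambda)-N_0(R_\lambda)$ as a contour integral of $\partial_\tau\log(D_1/D_0)$ over $\partial R_\lambda$ for a suitable relative determinant (Markus--Matsaev), or sets up a Grushin problem relative to the spectral projection of $-\Delta$ onto $[(\lambda-C_0)^2,(\lambda+C_0)^2]$ to show that each cluster of Laplace eigenvalues produces the same number of damped eigenvalues with multiplicity (Sj\"ostrand); either route makes the homotopy unnecessary. Finally, note that the ``local count $=\mathcal{O}(\lambda^{d-1})$ uniformly in $t$'' which you defer to the references is not a side lemma but essentially the whole content of the theorem (it is the Weyl law in $\mathcal{O}(1)$-wide boxes), so as written the proposal assumes the hard part and supplies a flawed reduction around it.
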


Combining Theorems \ref{concentrationforDWE} and \ref{renumber}, we show that on the compact Anosov manifold $M$, as the real part of eigenvalues $\Re \tau \to \infty$, most of the eigenvalues are distributed within a logarithmic region (see Figure \ref{CRFS}) around $-\overline{a}$. The region is asymptotically close to $\Im \tau = -\overline{a}$ as the frequency $\Re \tau \to \infty$.

When the geodesic flow is ergodic with respect to the normalized Liouville measure $\mu_{L}$ on the sphere bundle $S^*M$, Sjöstrand \cite{SJ1} first proved the spectral concentration of damped waves around the average $-\overline{a}$: for every $\epsilon > 0$, we have 
\begin{equation}\label{eq14} 
  \# \left\{n \ \middle| \ 0 \leq \Re \tau_n \leq \lambda, \ \left|\Im \tau_n + \overline{a}\right|\geq \epsilon \right\} = o\left(\lambda^{d}\right), \quad \lambda \to \infty. 
\end{equation} 
Furthermore, if $M$ is a compact Anosov manifold, Anantharaman \cite{AN2} improved the upper bound in \eqref{eq14}: for every $\epsilon > 0$, there exists a positive constant $c = c(a, \epsilon) > 0$ such that 
\begin{equation}\label{eq30} 
  \# \left\{n \ \middle| \ 0 \leq \Re \tau_n \leq \lambda, \ \left|\Im \tau_n + \overline{a}\right|\geq \epsilon \right\} = \mathcal{O}\left(\lambda^{d - c}\right), \quad \lambda \to \infty. 
\end{equation} 
Here, the exponent $c$ is determined by the rate function for large deviation principles (LDP); see \cite{AN2, LSY, KY}. Both \eqref{eq14} and \eqref{eq30} show that most of the eigenvalues are distributed within a horizontal strip around $-\overline{a}$.

\subsection{Semiclassical reduction} 
A semiclassical formulation of the problem was used in \cite{SJ1}; see also \cite{AN2}. We introduce the semiclassical calculus in Section \ref{sec2}. Take any $0 < h \ll 1$. To study the frequencies $\Re \tau = h^{-1} + \mathcal{O}(1)$, let $z = \frac{h^2 \tau^2}{2}$. The equation \eqref{DWEspec} then becomes $(P(z, h) - z) u = 0$, where $P(z, h) = -\frac{h^2 \Delta}{2} - ih\sqrt{2z} a$. More generally, we consider the following semiclassical pseudodifferential operator:
\begin{equation}\label{eq7}
P(z, h) = P + ih Q(z), \quad P \in \Psi^{2}(M), \ Q(z) \in \Psi^{1}(M),
\end{equation}
where $P$ is formally self-adjoint with principal symbol $\sigma_{h}(P) = \frac{1}{2}|\xi|_{x}^2$, and $Q(z)$ depends holomorphically on $z \in D\left(\frac{1}{2}, \frac{1}{2}\right)$\footnote{Here, $D(a, r) \subset \mathbb{C}$ denotes the disk centered at $a$ with radius $r$}, and $Q(z)$ is formally self-adjoint if $z \in \mathbb{R}$; see \cite{SJ1}. We call $P(z,h)$ semiclassical damped wave operator. We denote the set of the poles of $(z-P(z,h))^{-1}$
$$\Sigma = \left\{z \in D\left(\frac{1}{2}, \frac{1}{2}\right) \ | \ \exists u \in L^2(M), \ (P(z, h) - z) u = 0\right\}.$$
We fix some $C > 0$ and denote
\begin{equation}\label{eq9}
\Sigma_{\frac{1}{2}} := \left\{z \in \Sigma \ \Bigg| \ \frac{1}{2} - Ch \leq \Re z \leq \frac{1}{2} + Ch \right\}.
\end{equation}
We denote $q_{z}(x, \xi) := \sigma_{h}(Q(z))$ as the principal symbol of $Q(z)$, and $q(x, \xi) := q_{\frac{1}{2}}(x, \xi)$ is called the damping term, and recall that $\mu_{L}$ is the normalized Liouville measure on $S^*M$. Our main result is the following theorems:

\begin{theorem}\label{mainthm1}
  Let $M$ be a compact Anosov manifold of dimension $d$, and let $\overline{q} = \int_{S^*M} q(\rho)\, d\mu_{L}(\rho)$. Then there exists a constant 
  \begin{equation}\label{eq82}
  c(q,M) = \frac{1}{2\Lambda_{0}\sigma_{q}^2} \in (0,\infty],
  \end{equation}
  such that for any $0 < c < c(q,M)$ and any width function $w(h) > 0$ satisfying $\lim\limits_{h\to 0} w(h) = 0$ and $\lim\limits_{h\to 0} w^2(h) |\log h| = \infty$, we have
  \begin{equation}\label{eq10}
  \# \left\{ z \in \Sigma_{\frac{1}{2}} \, \Big| \, \left| \frac{\Im z}{h} - \overline{q}\right| \geq w(h) \right\} = \mathcal{O}\left(\frac{h^{1-d}}{e^{cw(h)^2|\log h|}w(h)^2}\right).
  \end{equation}
  Here, $\sigma_{q}^2$ is defined in \eqref{limitofvariance} as the limit of the variance of $\frac{1}{\sqrt{t}} \int_{0}^{t} q \circ \varphi^{s}ds$ on the probability space $(S^*M, \mu_{L})$ as $t \to \infty$.\footnote{This result can be found in Ratner \cite[Theorem 3.1]{RM73}, and if $\sigma_{q}=0$, we define $c(q,M)=\infty$.} The constant $\Lambda_{0}>0$, given in \eqref{maximalexpansion}, is the maximal expansion rate of the geodesic flow $\varphi^{t}$ on $S^*M$.
  \end{theorem}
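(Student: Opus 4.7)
The plan follows the framework of Sjöstrand \cite{SJ1} and Anantharaman \cite{AN2}, with the decisive new ingredient being that the moderate deviation principle (Theorem \ref{MDPA}) replaces the large deviation principle of \cite{AN2}, permitting us to work at the much finer scale $w(h)\to 0$ instead of fixed $\epsilon>0$.

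\textbf{Step 1: Reduction to an ergodic average of $q$.} For an eigenpair $(z,u)$ of $P(z,h)$ with $\|u\|=1$, the identity $\Im z/h = \Re\langle Q(z)u,u\rangle$ follows from taking the imaginary part of $\langle P(z,h)u,u\rangle = z$. Since $Q(z)$ depends holomorphically on $z$ with $|\Re z - \tfrac{1}{2}|\leq Ch$ on $\Sigma_{\frac{1}{2}}$, I may replace $Q(z)$ by $Q:=Q(1/2)$ at the cost of $\mathcal{O}(h)$. Next, at the sub-Ehrenfest time $T(h):=\kappa\Lambda_0^{-1}|\log h|$ with $\kappa<1$, the long-time Egorov theorem on compact Anosov manifolds yields
$$\frac{1}{T}\int_0^T e^{isP/h}Qe^{-isP/h}\,ds \;=\; \mathrm{Op}_h(q_T) + o_{L^2\to L^2}(1),\qquad q_T(\rho):=\frac{1}{T}\int_0^T q(\varphi^s\rho)\,ds.$$
The Anosov hypothesis (through $\Lambda_0$) is essential here to keep $q\circ\varphi^s$ in a usable semiclassical symbol class up to the time $T$.

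\textbf{Step 2: Counting via a Jensen/trace inequality and the MDP.} As in Anantharaman \cite{AN2}, I work with the analytic Fredholm determinant $D(z)=\det\bigl(I + (z-P)^{-1}(-ihQ(z))\bigr)$, whose zeros are the elements of $\Sigma$, and apply Jensen's formula on a small rectangle around $\Re z = 1/2$ to produce a trace-type bound
$$w(h)^2\,\#\bigl\{z\in\Sigma_{\frac{1}{2}} : |\Im z/h - \bar q|\geq w(h)\bigr\}\;\lesssim\; \mathrm{tr}\bigl(\chi(\Re P(z,h))\,f_w(\mathrm{Op}_h(q_T))\bigr) + o(1),$$
where $\chi\in C^\infty_c(\mathbb{R})$ equals $1$ near $\tfrac{1}{2}$ and $f_w\in C^\infty(\mathbb{R};[0,1])$ is supported in $\{|r-\bar q|\geq w(h)(1-o(1))\}$. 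The semiclassical Weyl law applied in the strip of width $Ch$ about $\Re z = 1/2$ gives
$$\mathrm{tr}\bigl(\chi(\Re P(z,h))\,f_w(\mathrm{Op}_h(q_T))\bigr) \;\lesssim\; h^{1-d}\,\mu_L\bigl\{\rho\in S^*M : |q_T(\rho) - \bar q|\geq w(h)(1-o(1))\bigr\}.$$
Because $w(h)\to 0$ while $w(h)^2|\log h|\to\infty$, we are precisely in the moderate-deviation regime, and Theorem \ref{MDPA} applied with deviation $w(h)(1-o(1))$ and time $T=\kappa\Lambda_0^{-1}|\log h|$ gives
$$\mu_L\bigl\{|q_T - \bar q|\geq w(h)(1-o(1))\bigr\}\;\leq\;\exp\Bigl(-\frac{\kappa}{2\Lambda_0\sigma_q^2}\,w(h)^2|\log h|(1+o(1))\Bigr).$$
Letting $\kappa\uparrow 1$ allows any $c<1/(2\Lambda_0\sigma_q^2)=c(q,M)$; combining the three displays yields \eqref{eq10}.

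\textbf{Principal obstacles.} The most delicate point is the Jensen/trace step: because $P(z,h)$ is non-self-adjoint, orthogonal spectral projectors are unavailable, and the implication from a deviation on an eigenfunction to an actual count of eigenvalues must be mediated by the Fredholm determinant and resolvent estimates. A second subtlety is control of the Egorov remainder up to Ehrenfest time, which forces the constraint $\kappa<1$ and ultimately produces the factor $\Lambda_0$ in $c(q,M)$. Finally, careful bookkeeping of $o(1)$ factors is required in both the Egorov and MDP steps so that the constant $c$ can be brought arbitrarily close to $c(q,M)$ from below.
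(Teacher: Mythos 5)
Your high-level roadmap---conjugate by an averaging operator up to a sub-Ehrenfest time, control the trace class perturbation by the moderate deviation principle, and count zeros via Jensen's formula---is indeed the architecture of the paper's proof. However, the crucial Step~2 is stated at a level of vagueness that conceals a genuine gap, and the specific objects you propose there do not work as written.

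First, the determinant $D(z)=\det\bigl(I+(z-P)^{-1}(-ihQ(z))\bigr)$ is not well-defined: $(z-P)^{-1}$ is a pseudodifferential operator of order $-2$ and $Q(z)$ is of order $1$, so their product is of order $-1$, which fails to be trace class on a $d$-dimensional manifold for any $d\geq 1$. Moreover, $P-z$ itself has eigenvalues accumulating in the window $|\Re z-\tfrac12|\leq Ch$, so the factorization $P+ihQ(z)-z=(P-z)(I+(z-P)^{-1}(-ihQ(z)))$ is not a legitimate way to identify $\Sigma$ with the zero set of a holomorphic function on $\widetilde\Omega_h$. The paper circumvents this by first conjugating to $\mathcal P_T=A_T^{-1}P(z,h)A_T=P+ihQ_T+h\widetilde R_T$ (so that $Q_T$ quantizes the time-averaged symbol $\langle q\rangle_T$) and then introducing the explicit perturbation $\widetilde{\mathcal P}_T=\mathcal P_T-ih\,f\bigl(\tfrac{2P-1}{h}\bigr)\mathcal Q_Tf\bigl(\tfrac{2P-1}{h}\bigr)$, where $\mathcal Q_T$ quantizes $\widehat q^{\,T}=\bigl(q^T-\chi_h(q^T)\bigr)\chi_0$, a nonnegative symbol supported exactly on the moderate-deviation set $\{\langle q\rangle_T\geq \bar q+\kappa w(h)\}$. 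The spectral cutoff $f\bigl(\tfrac{2P-1}{h}\bigr)$ is what makes the difference $\mathcal P_T-\widetilde{\mathcal P}_T$ trace class, with $\|\mathcal P_T-\widetilde{\mathcal P}_T\|_{\mathrm{tr}}\lesssim h^{2-d}\int_{S^*M}\bigl(q^T-\chi_h(q^T)\bigr)d\mu_L$; the MDP then kills this integral.

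Second, the factor $w(h)^2$ in the denominator of \eqref{eq10} does not arise in the way your display suggests. One factor of $w(h)^{-1}$ comes from the uniform resolvent estimate $\|(\widetilde{\mathcal P}_T-z)^{-1}\|=\mathcal O\bigl((w(h)h)^{-1}\bigr)$ on $\widetilde\Omega_h$ (Proposition~\ref{invertthm}, whose proof needs the local Gårding inequality and the hypothesis $w(h)|\log h|^{1/2}\to\infty$ to absorb the remainder $h/|\log h|^{1/2}$). The other factor of $w(h)^{-1}$ comes from Jensen's formula: after mapping the $h$-dependent domain $\Omega_h$ biholomorphically into the unit disc, the boundary of the image lies at distance $\asymp w(h)$ from $\partial\mathbb D$, and Jensen yields $N(g,\Omega_h)=\mathcal O\bigl(w(h)^{-1}\bigr)\bigl(\log\|g\|_\infty-\log|g(z_0)|\bigr)$. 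Controlling the Riemann map uniformly in $h$ (Painlevé, uniform derivative bounds) is a real issue that your sketch skips. You flag these difficulties as ``principal obstacles,'' but naming them is not the same as resolving them; as presented, the proposal is missing the construction that actually produces a well-defined holomorphic determinant and the two separate sources of $w(h)^{-1}$.
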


Theorem \ref{mainthm1} shows that the width of concentration for the spectrum of damped waves is given by $w(h) \sim |\log h|^{-\frac{1-\alpha}{2}}$ for any \(\alpha > 0\). A similar result was previously established by Schenck in \cite[Theorem 1.3]{SE09} for the spectrum of damped quantum maps, where he proved that the width of concentration is also $w(h) \sim |\log h|^{-\frac{1-\alpha}{2}}$ for any \(\alpha > 0\). We will introduce damped quantum maps in Subsection \ref{relatedwork}.

\subsection{Application to Zeros of the Twisted Selberg Zeta Function}
As an application, we consider another important type of non-self-adjoint perturbation of the Laplacian, the twisted Laplacian \( \Delta_{\omega} \), associated with a harmonic 1-form \( \omega \in \mathcal{H}^{1}(M,\mathbb{C}) \) on a compact hyperbolic surface \( M \). The twisted Laplacian \( \Delta_{\omega} \) is defined as follows:
\begin{equation}
    \Delta_{\omega}f := \Delta f - 2\langle \omega, df \rangle_{x} + |\omega|_{x}^{2}f, \quad \text{for} \quad f \in C^{\infty}(M).
\end{equation}
Here, \( |\omega|_{x}^{2} = \langle \omega, \omega \rangle_{x} \)\footnote{Here, $\langle -,- \rangle$ is $\mathbb{C}$-bilinear, i.e., $\langle \lambda \omega, \mu \eta \rangle_{x} = \lambda \mu \langle \omega, \eta \rangle_{x}$ for any $\lambda, \mu \in \mathbb{C}$ and $\omega, \eta \in T^{*}_{x}X$.}. In general, \( \Delta_{\omega} \) is a non-self-adjoint perturbation of the Laplacian and has a discrete spectrum on \( L^{2}(M) \):
\begin{equation}\label{equation}
    \Delta_{\omega}\phi_{j} + \lambda_{j}\phi_{j} = 0, \quad ||\phi_{j}||_{L^2}=1, \quad \text{with} \quad \lambda_{0}<\Re \lambda_{1} \leq \cdots.
\end{equation}
Let the spectral parameter \( r_{j} \) be defined by \( \lambda_{j} = \frac{1}{4} + r_{j}^{2} \), where \( \Re r_{j} \geq 0 \). As a corollary of our main Theorem \ref{mainthm1}, we derive that there exists a constant $c(\omega,M) \in (0,\infty]$\footnote{Here, we identify \(\omega\) as a function \(\omega(x, \xi) := \langle \omega, \xi \rangle_{x}\) on \(T^*M\).} which is defined in \eqref{eq82} such that for any $0<c<c(\omega, M)$ and any $0<\alpha<1$, we have:
\begin{equation}\label{eq59}
\#\left\{n \ \middle| \ 0 \leq \Re r_n \leq \lambda, \ \left|\Im r_n \right|\geq (\log \lambda)^{-\frac{1-\alpha}{2}} \right\}=\mathcal{O}\left(\frac{\lambda^2}{e^{c|\log \lambda|^{\alpha}}(\log \lambda)^{\alpha-1}}\right), \quad \lambda \to \infty.
\end{equation}

Selberg \cite{SA} introduced the famous Selberg zeta function on compact hyperbolic surfaces. M{\"u}ller \cite{MW}, Spilioti \cite{SP}, and Frahm and Spilioti \cite{FS} studied twisted Selberg zeta functions for non-unitary representations. In our setting, we consider the following twisted Selberg zeta function:
$$Z_{\omega}(s):=\prod_{k \geq 0} \prod_{\gamma \in \mathcal{P}(M)}\left(1-e^{\int_{\gamma}\omega}e^{-(s+k) \ell_{\gamma}}\right), \quad \Re s \ \text{large enough}.$$
Here \( \mathcal{P}(M) \) denotes the set of oriented prime geodesics on \( M \). Then \( Z_{\omega}(s) \) has a holomorphic extension to \( \mathbb{C} \). Its set of zeros is the divisor (with multiplicities):
$$\bigcup_j\left\{\frac{1}{2} \pm i r_j\right\} \cup \bigcup_{k=0}^{\infty}(2g-2)(2k+1)\{-k\},$$
where \( \left\{\frac{1}{4}+r_j^2\right\} \) is the spectrum of \( \Delta_{\omega} \), with eigenvalues repeated according to their multiplicity, see Naud and Spilioti \cite[Theorem 2.1]{NS}. We call the $\{-k\}$ trivial zeros and $\left\{\frac{1}{2} \pm i r_j \right\}$ non-trivial zeros of $Z_{\omega}(s)$. Thus, we immediately obtain that most of the non-trivial zeros of \( Z_{\omega}(s) \) are distributed within a logarithmic region, asymptotically approaching \( \Re s = \frac{1}{2} \) as \( \Im s \to \infty \).

\begin{theorem}\label{zerosoftsz}
  Let \( M \) be a compact hyperbolic surface, \( \omega \in \mathcal{H}^{1}(M,\mathbb{C}) \), and \( \{s_{n}\} \) be the non-trivial zeros of the twisted Selberg zeta function \( Z_{\omega}(s) \). We derive that for any $0<c<c(\omega,M)$ and any $0<\alpha<1$, we have:
  $$\#\left\{n \ \middle| \ 0 \leq \Im s_n \leq \lambda, \ \left|\Re s_n - \frac{1}{2} \right|\geq (\log \lambda)^{-\frac{1-\alpha}{2}} \right\}=\mathcal{O}\left(\frac{\lambda^2}{e^{c|\log \lambda|^{\alpha}}(\log \lambda)^{\alpha-1}}\right), \quad \lambda \to \infty.$$
\end{theorem}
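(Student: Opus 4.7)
The plan is to reduce Theorem \ref{zerosoftsz} to the spectral concentration estimate \eqref{eq59} for the twisted Laplacian $\Delta_\omega$, via the explicit description of the non-trivial zeros of $Z_\omega(s)$ given by Naud--Spilioti \cite[Theorem 2.1]{NS}. Since that divisor formula has already been recorded in the excerpt, and \eqref{eq59} is stated there as a direct output of the main Theorem \ref{mainthm1}, what remains is essentially a change of variables between the spectral parameters $r_j$ and the non-trivial zeros $s_n$.

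First, I would invoke the identification $\{s_n\} = \{\tfrac{1}{2} \pm i r_j\}$, counted with multiplicity equal to that of $\lambda_j = \tfrac{1}{4} + r_j^2$ in the spectrum of $\Delta_\omega$, together with the convention $\Re r_j \geq 0$. Writing $s = \tfrac{1}{2} + ir$ gives $\Im s = \Re r$ and $\Re s - \tfrac{1}{2} = -\Im r$, whereas $s = \tfrac{1}{2} - ir$ gives $\Im s = -\Re r$. Thus, under $\Re r_j \geq 0$, only $s_n = \tfrac{1}{2} + ir_j$ falls in the window $0 \leq \Im s_n \leq \lambda$ when $\Re r_j > 0$, while the finitely many parameters with $\Re r_j = 0$ (corresponding to the small eigenvalues $\lambda_j \leq \tfrac{1}{4}$) contribute at most an $\mathcal{O}(1)$ correction, independent of $\lambda$. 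Consequently the constraints $0 \leq \Im s_n \leq \lambda$ and $|\Re s_n - \tfrac{1}{2}| \geq (\log\lambda)^{-(1-\alpha)/2}$ are equivalent, up to that $\mathcal{O}(1)$ error and a harmless factor accounting for the two sheets $\pm ir_j$, to $0 \leq \Re r_j \leq \lambda$ together with $|\Im r_j| \geq (\log\lambda)^{-(1-\alpha)/2}$.

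Substituting these translated constraints into \eqref{eq59} then yields exactly the bound claimed by Theorem \ref{zerosoftsz}. I do not anticipate any genuine obstacle: all of the analytic work is packaged into Theorem \ref{mainthm1} and its corollary \eqref{eq59}, and the step from eigenvalues to zeros is a bijection dictated by the Naud--Spilioti divisor formula. The only care required is bookkeeping of multiplicities and the treatment of the finitely many spectral parameters with $\Re r_j = 0$, both of which are absorbed into the $\mathcal{O}$-notation on the right-hand side of the estimate.
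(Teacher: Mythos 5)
Your reduction of Theorem \ref{zerosoftsz} to \eqref{eq59} via the Naud--Spilioti divisor formula matches the paper's treatment, and your bookkeeping of the change of variables is correct: writing $s = \tfrac{1}{2} + ir$ gives $\Im s = \Re r$ and $|\Re s - \tfrac{1}{2}| = |\Im r|$, the $-ir$ sheet contributes only when $\Re r_j = 0$ (finitely many, absorbed into $\mathcal{O}(1)$), and multiplicities are transferred directly by the bijection. That matches the paper's one-line assertion that Theorem \ref{zerosoftsz} is equivalent to \eqref{eq59}.

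However, there is a gap of scope. You treat \eqref{eq59} as already established, but in the paper \eqref{eq59} is only \emph{stated} as a corollary in the introduction; its proof is deferred to Section 5, where it is carried out precisely as part of the proof of Theorem \ref{zerosoftsz} (``Since Theorem \ref{zerosoftsz} is equivalent to \eqref{eq59}, we only need to prove \eqref{eq59}''). The content you are skipping is nontrivial: one must realize $-\tfrac{1}{2}h^2\Delta_\omega$ as a semiclassical damped wave operator $P + ihQ$ in the sense of \eqref{eq7}, with
$$P=\tfrac{1}{2}\bigl(-h^2\Delta+2h\langle ih\Im \omega,d\cdot \rangle -h^2|\omega|_{x}^2\bigr), \qquad Q=\langle -ih\Re \omega, d\cdot\rangle,$$
verify $\sigma_{h}(P)=\tfrac{1}{2}|\xi|_{x}^2$, identify the damping symbol $q(x,\xi)=\Re\langle \omega,\xi\rangle_{x}$, and compute $\overline{q}=\int_{S^*M} q\,d\mu_{L}=0$ (this last point is exactly why the concentration line is $\Re s = \tfrac{1}{2}$ and not some shifted line), and then sum the $h$-window estimate from Theorem \ref{mainthm1} over a partition of $[0,\lambda]$ exactly as in the proof of Theorem \ref{concentrationforDWE}. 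Without this derivation of \eqref{eq59}, the argument is incomplete, even though the reduction step you do spell out is correct.
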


\subsection{Related work}\label{relatedwork}
The real and imaginary parts of the complex eigenvalues for \eqref{DWEop} correspond to the frequencies of oscillation and the exponential decay rates of energy, respectively, for the stationary damped wave. Hitrik \cite{HM03} shows that in the case of geometric control, the solution of \eqref{DWEeq} can be expressed as a sum of finite stationary solutions corresponding to underdamped modes and a decaying remainder. Therefore, the spectral distribution is closely related to the structure of the damped wave.

The spectral distribution results in \eqref{eq14}, \eqref{eq30}, and \eqref{eq10} describe upper bounds on the number of eigenvalues with imaginary parts deviating from the average and real parts in the interval \(\left[\frac{1}{2} - Ch, \frac{1}{2} + Ch\right]\). The upper limit of the imaginary parts of the eigenvalues for damped waves in the high-frequency limit, which we call the essential spectral gap, is closely related to the energy decay of solutions to the damped wave equation. For related work, see Lebeau \cite{LB96}, Schenck \cite{SE}, Jin \cite{JL}, and Dyatlov, Jin, and Nonnenmacher \cite{DJN}. On the other hand, Anantharaman \cite{AN2} applied the twisted Selberg trace formula to establish a lower bound on the number of eigenvalues of the twisted Laplacian \(\Delta_{\omega}\) whose imaginary parts deviate from the average on compact arithmetic hyperbolic surfaces. The study of the spectral distribution of both the Laplacian and its non-self-adjoint perturbations in the high-frequency limit on compact Anosov manifolds falls in the area of quantum chaos; see Nonnenmacher \cite{Non11}, Anantharaman \cite{AN3}, and Zelditch \cite{ZS19} for recent surveys.

The damped quantum map is a toy model that was primarily introduced in \cite{KNS08} to mimick the resonance spectra of dielectric microcavities. It consists of an area-preserving smooth diffeomorphism \(\kappa\) acting on the \(2\)-dimensional torus \(\mathbb{T}^2\), along with a damping function \(a(x,\xi) \in C^{\infty}\left(\mathbb{T}^2\right)\), where \(|a| \leq 1\). If we denote by \(U_h(\kappa)\) the unitary propagator obtained from the quantization of \(\kappa\) and by \(\mathrm{Op}_h(a)\) the quantization of the damping function \(a(x,\xi)\), the damped quantum map takes the form \(M_h(a, \kappa):=\mathrm{Op}_h(a) U_h(\kappa)\). The damped quantum map is also a toy model for the damped wave equation on a compact Riemannian manifold. The semiclassical damped propagator \(U_{q}(t):=e^{-itP(z,h)/h}\), where \(P(z,h)\) is the semiclassical damped wave operator \eqref{eq7}, has a form similar to that of the damped quantum map \(M_h(a, \kappa)\); see, for example, \cite[Proposition 6]{Non11}.

Schenck \cite{SE09} studied the spectral distribution of the damped quantum map \(M_h(a, \kappa)\) with an Anosov map \(\kappa\). He established the Weyl law, the width of the spectral distribution, and a spectral deviation theorem for the damped quantum map \(M_{h}(a,\kappa)\) in \cite[Theorem 1.1-1.4]{SE09}. These results all have corresponding analogues in the context of damped wave equations on a compact Anosov manifold; see \eqref{eq24}, \eqref{eq14}, \eqref{eq30}, and Theorem \ref{mainthm1}. Numerical results for damped quantum maps are discussed in Nonnenmacher and Schenck \cite{NS08}.

In hyperbolic geometry and dynamical systems, the twisted Laplacian is used to study the number of closed geodesics in a homology class on a compact hyperbolic surface; see Katsuda and Sunada \cite{KS}, Phillips and Sarnak \cite{PS}, Lalley \cite{LS1,LS2}, Babillot and Ledrappier \cite{BL}, and Anantharaman \cite{AN1}. It is also considered as the Bochner Laplacian on a flat line bundle, which is constructed from a 1-dimensional representation of the fundamental group (see \cite{Kob}). Naud and Spilioti \cite{NS} proved the Weyl law for some higher-dimensional non-unitary twisted Bochner Laplacians using the method of thermodynamic formalism.

A monograph by Sjöstrand \cite{SJ2} systematically studies the spectral distribution of non-self-adjoint perturbations of second-order elliptic operators; see also Gohberg and Krein \cite{GK69}. 

\subsection*{Organization of the paper}
The paper is organized as follows: In Section \ref{sec2}, we introduce the MDP Theorem \ref{MDPA} for a compact Anosov manifold and the semiclassical pseudodifferential calculus. In Section \ref{sec3}, we construct the average of the operator $\mathcal{P}_{T}(z)$ and its perturbation $\widetilde{\mathcal{P}}_{T}(z)$. In Section \ref{sec4}, we control the trace norm of the perturbation $\widetilde{\mathcal{P}}_{T}(z)-\mathcal{P}_{T}(z)$ and prove the invertibility of $\widetilde{\mathcal{P}}_{T}(z)-z$. In Section \ref{sec5}, we prove Theorems \ref{mainthm1}, \ref{concentrationforDWE} and \ref{zerosoftsz}. In Appendix \ref{appendix}, we give a proof of the MDP Theorem \ref{MDPA}.

\subsection*{Notation}
In the following, when the dependence of a constant on parameters is crucial, we indicate the parameters on which they depend. Big $\mathcal{O}$ notation means that given two functions $f(x)$ and $g(x)$, we say $f(x)=\mathcal{O}(g(x))$ as $x \to A$, ($A \in [-\infty,\infty]$) if there exist positive constants $C$ such that $|f(x)| \leq C|g(x)|$ for all $x$ which are sufficiently closed to $A$.  If the constant depends on some parameter $\alpha$, we write $f=\mathcal{O}_{\alpha}(g)$.

\subsection*{Acknowledgements}
The author would like to thank Long Jin for his encouragement and numerous discussions, and St\'ephane Nonnenmacher for suggesting the problem of the width of concentration for the spectrum of damped waves as well as for providing many useful suggestions, including the improvement of width \(w(h)\) to \(|\log h|^{-\frac{1-\alpha}{2}}\) for any \(\alpha > 0\). I also thank Jinxin Xue and Weili Zhang for several discussions on moderate deviations and analytic perturbations of operators. Yulin Gong is supported by the National Key R \& D Program of China 2022YFA100740.

\section{Preliminary}\label{sec2}
\subsection{Anosov Geodesic Flow on Compact Manifolds}
In this subsection, we review the basis of Anosov geodesic flow, for more details, see \cite{KH, BS02}. Furthermore, we introduce the MDP Theorem \ref{MDPA}, which provides the subexponential decay of the measure of moderate deviation sets.
\begin{definition}\label{anosov}
The geodesic flow $\varphi^{t}$ on the sphere bundle $S^*M$ of a $d$-dimensional compact smooth manifold $M$ is called an Anosov flow if, at each point $\rho \in S^*M$, there exists a splitting of the tangent space:
$$
T_\rho S^* M=\mathbb{R} X(\rho) \oplus E^{+}(\rho) \oplus E^{-}(\rho),
$$
where $X(\rho)$ is the Hamiltonian vector field generated by $\varphi^{t}$, and $E^{ \pm}(\rho)$ are the unstable and stable subspaces at the point $\rho$. Both subspaces have dimension $d-1$. The families $\left\{E^{ \pm}(\rho), \rho \in S^*M\right\}$ form the unstable/stable distributions, which are invariant with respect to the flow, Hölder continuous, and characterized by the following property: there exist constants $C, \lambda>0$ such that
\begin{equation}\label{eq26}
\forall \rho \in S^*M, \ \forall v \in E^{\mp}(\rho), \ \forall t>0, \quad \left|d \varphi_\rho^{ \pm t} (v)\right| \leq C e^{-\lambda t}|v| .
\end{equation}
If the geodesic flow $\varphi^{t}$ of $M$ is Anosov, then $M$ is called an Anosov manifold. One broad class of Anosov manifolds is that of negatively curved manifolds. 
\end{definition}

The chaotic properties of Anosov flow are widely studied. In particular, Hopf proved that the geodesic flow of a compact hyperbolic surface is ergodic with respect to the Liouville measure on the sphere bundle. Anosov and Sinai applied Hopf's argument to prove that the Anosov geodesic flow on a compact manifold is ergodic with respect to the Liouville measure on the sphere bundle. These results can be found in \cite{KH, BS02}.

Furthermore, Dolgopyat \cite{DD98} proved that the geodesic flow on a compact Anosov surface is exponentially mixing. Liverani \cite{LC04} extended Dolgopyat's theorem to general contact Anosov flows. We note that the geodesic flow is a special case of a contact flow on the cosphere bundle. These proofs are based on the study of the transfer operator (see \eqref{transfer}), and research on the transfer operator can be found in, e.g., \cite{DD98, LC04, TM10, FS11, DZ16}.  

In Appendix \ref{appendix}, we introduce Tsujii's theorem \cite{TM10}, which establishes the quasi-compactness of the transfer operator (see Theorem \ref{transfer}). The quasi-compactness of the transfer operator plays a crucial role in the proof of the MDP (Theorem \ref{MDPA}).  

In probability theory, the large deviation principle (LDP) and central limit theorems (CLT) have been widely studied (see, e.g., \cite{DR19}). The MDP is an intermediate regime between the LDP and the CLT (see, e.g., \cite{WL95, DC98, DA09, DS23}).  

In hyperbolic dynamical systems, the LDP has been established by Kifer \cite{KY} and Young \cite{LSY}, while the CLT for Anosov geodesic flows was proved by Ratner \cite{RM73}. The MDP has also been studied in some non-uniformly hyperbolic systems, including Anosov diffeomorphisms (see, e.g., Rey-Bellet and Young \cite{RY08}).  

Now, we define an equivalence relation on \( C^{\infty}(S^*M, \mathbb{R}) \).

\begin{definition}\label{cohomologous}
For any $f,g \in C^{\infty}(S^*M,\mathbb{R})$, we call $f$ is cohomologous to $g$ if there exists $h \in C^{\infty}(S^*M,\mathbb{R})$ such that 
\begin{equation}\label{cohomoeq}
f(\rho)=g(\rho)+Xh(\rho), \quad \rho \in S^*M.
\end{equation}
\end{definition}
Ratner's theorem \cite[Theorem 3.1]{RM73} implies that if $M$ is a compact Anosov manifold, for any $q\in C^{\infty}(S^*M)$, there exists a constant $\sigma_{q}\geq 0$ such that
\begin{equation}\label{limitofvariance}
\lim\limits_{t \to \infty}\frac{1}{t}\int_{S^*M}\left|q \circ \varphi^{t}(\rho)-\overline{q}\right|^2d\mu_{L}(\rho)=\sigma_{q}^2.
\end{equation}
Furthermore, $\sigma_{q}>0$ if and only if $q$ is not cohomologous to a constant, i.e. there does not exist a smooth function such that $q=\overline{q}+Xh$. Now we introduce the MDP for Anosov geodesic flows on a compact manifold:
  \begin{theorem}[Moderate deviations]\label{MDPA}
  Let $M$ be a compact Anosov manifold, $a(t)$ be a family of positive real numbers such that $\lim\limits_{t \rightarrow \infty}a(t)t^{-\frac{1}{2}}=0$ and $\lim\limits_{t \rightarrow \infty} a(t)=\infty$. For any real-valued smooth function $q \in C^{\infty}(S^*M, \mathbb{R})$ which is not cohomologous to a constant, and any closed interval $I \subset \mathbb{R}$, we have
  \begin{equation}\label{moderatedeviationprinciples}
  \lim _{t \rightarrow \infty} \frac{a(t)^2}{ t} \log \mu_{L}\left\{\rho \in S^*M \ \Bigg| \ a(t)\left(\frac{1}{t}\int_{0}^{t}q \circ \varphi^{s}ds-\overline{q}\right) \in I \right\}=-\inf _{s \in I} \frac{s^2}{2 \sigma^{2}_{q}}
  \end{equation}
  \end{theorem}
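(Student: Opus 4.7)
The plan is to invoke the G\"artner--Ellis theorem, reducing the problem to computing the log-moment generating function at the moderate deviation scale. Without loss of generality assume $\overline q = 0$, set $Y_t(\rho) := \int_0^t q\circ\varphi^s(\rho)\,ds$, and define
\begin{equation*}
\Lambda_t(\xi) := \frac{a(t)^2}{t} \log \int_{S^*M} \exp\!\left(\tfrac{\xi}{a(t)}\, Y_t(\rho)\right) d\mu_L(\rho).
\end{equation*}
The goal is to prove $\Lambda_t(\xi) \to \sigma_q^2\xi^2/2$ for every $\xi\in\mathbb R$. Note that for fixed $\xi$ the parameter $z_t := \xi/a(t) \to 0$, so only a small neighbourhood of $z=0$ in a suitable spectral perturbation problem enters the argument; this is precisely what makes the MDP analytically simpler than the LDP.

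The central object is the Feynman--Kac semigroup $\mathcal P_z(t) f(\rho) := e^{z Y_t(\rho)} f(\varphi^t\rho)$, whose generator is $\mathcal G_z = X + z M_q$, where $X$ generates the geodesic flow and $M_q$ denotes multiplication by $q$. By Tsujii's Theorem \ref{transfer}, the unperturbed generator $\mathcal G_0 = X$ acts quasi-compactly on a suitable anisotropic Banach space $\mathcal B \hookrightarrow \mathcal D'(S^*M)$: in some half-plane $\{\Re\lambda>-\beta\}$ its spectrum on $\mathcal B$ is discrete, with $\lambda=0$ a simple eigenvalue (eigenfunction $\mathbf 1$, by ergodicity of $\mu_L$) separated from the rest of the spectrum by a gap. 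Since $M_q$ is bounded on $\mathcal B$, Kato's analytic perturbation theory applies: for $|z|$ small enough, $\mathcal G_z$ has a unique simple eigenvalue $\lambda(z)$ near $0$ depending analytically on $z$, together with a rank-one spectral projector $P_z$, and the complementary part of the semigroup satisfies $\|R_z(t)\|_{\mathcal B\to\mathcal B} \le C e^{-\delta t}$ uniformly in small $z$.

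Second-order Rayleigh--Schr\"odinger then yields $\lambda(0)=0$, $\lambda'(0) = \int q\,d\mu_L = 0$, and
\begin{equation*}
\lambda''(0) \;=\; 2\int_{S^*M} q(\rho)\!\int_0^\infty\! q(\varphi^s\rho)\,ds\,d\mu_L(\rho) \;=\; 2\int_0^\infty\!\int_{S^*M} q\cdot(q\circ\varphi^s)\,d\mu_L\,ds \;=\; \sigma_q^2,
\end{equation*}
the last equality being the Green--Kubo form of the variance in \eqref{limitofvariance}; the time integrals converge by the exponential mixing provided by the spectral gap. Pairing the spectral decomposition $\mathcal P_z(t)\mathbf 1 = e^{\lambda(z)t} P_z \mathbf 1 + R_z(t)\mathbf 1$ against $\mu_L$ gives, for small $z$,
\begin{equation*}
\int_{S^*M} e^{z Y_t} d\mu_L \;=\; e^{\lambda(z)t}\bigl(c(z)+O(e^{-\delta t})\bigr), \qquad c(0)=1.
\end{equation*}
Substituting $z=\xi/a(t)$, using $\lambda(z)=\tfrac12\sigma_q^2 z^2 + O(z^3)$ and $\log c(z)=O(z)$, one finds
\begin{equation*}
\Lambda_t(\xi) \;=\; \frac{\sigma_q^2\xi^2}{2} + O\!\left(\tfrac{1}{a(t)}\right) + O\!\left(\tfrac{a(t)^2}{t}\right) \;\longrightarrow\; \frac{\sigma_q^2\xi^2}{2},
\end{equation*}
the error going to $0$ because $a(t)\to\infty$ and $a(t)/\sqrt t\to 0$.

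Finally, $\Lambda(\xi)=\sigma_q^2\xi^2/2$ is finite and everywhere differentiable; when $\sigma_q>0$ (equivalent to $q$ not being cohomologous to a constant) it is also strictly convex, with Legendre transform $\Lambda^*(s) = s^2/(2\sigma_q^2)$. The G\"artner--Ellis theorem therefore yields the MDP \eqref{moderatedeviationprinciples} with speed $t/a(t)^2$ and rate function $\Lambda^*$: the upper bound is the exponential Chebyshev inequality, the lower bound the standard exponential tilt, and for a closed interval $I$ with nonempty interior continuity of $\Lambda^*$ gives $\inf_I\Lambda^* = \inf_{I^\circ}\Lambda^*$, so that the upper and lower bounds match. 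The principal technical obstacle is rigorously implementing Kato's perturbation framework for the unbounded, non-self-adjoint generator $X$ on the anisotropic space $\mathcal B$ --- in particular obtaining the rank-one projector $P_z$ and the uniform exponential decay of $R_z(t)$ --- which is precisely where the quasi-compactness furnished by Tsujii's Theorem \ref{transfer} is essential.
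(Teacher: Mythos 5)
Your proposal follows the same overall strategy as the paper: apply the G\"artner--Ellis theorem, and establish the required limit of the logarithmic moment generating functional at the moderate deviation scale $\xi/a(t)$ via analytic perturbation theory (Kato) of the leading eigenvalue of the transfer operator on Tsujii's anisotropic space. The difference is in how the perturbation theory is implemented. You work directly with the generator $\mathcal{G}_z = X + zM_q$ of the Feynman--Kac semigroup and try to locate a perturbed leading eigenvalue $\lambda(z)$ of the \emph{unbounded} operator $X$. The paper instead fixes a sufficiently large time $t_1$, works with the \emph{bounded} operator $\mathcal{L}^{t_1}_\xi f = e^{\xi q_{t_1}}\,f\circ\varphi^{t_1}$ on $B^\beta$, and shows that its spectrally isolated simple eigenvalue $\lambda(\xi)$ near $1$ depends analytically on $\xi$; the full semigroup at time $t$ is then handled by writing $t = nt_1 + r_1$ and iterating $\mathcal{L}^{t_1}_\xi$. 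This distinction is not cosmetic. Tsujii's Theorem gives quasi-compactness of the time-$t$ transfer operators $\mathcal{L}^t$; it does not by itself give a spectral gap for the generator $X$ on $B^\beta$. Passing from discrete-time quasi-compactness to a half-plane of discrete spectrum for $X$ requires a spectral mapping theorem for the semigroup, which is nontrivial for $C_0$-semigroups on Banach spaces (it is precisely the content of Butterley--Liverani / Faure--Sj\"ostrand-type results for the generator, not cited here). You do flag this as ``the principal technical obstacle,'' but flagging it does not close it: as written, the claims that $\mathcal{G}_z$ has a simple isolated eigenvalue $\lambda(z)$ with a rank-one projector $P_z$ and that $\|R_z(t)\|\le Ce^{-\delta t}$ uniformly for small $z$ are not justified by Tsujii's Theorem alone. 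The paper's discrete-time formulation avoids this issue entirely and stays within the scope of its cited references.

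A second, smaller divergence: you compute $\lambda''(0)$ via the Green--Kubo identity $\sigma_q^2 = 2\int_0^\infty \int_{S^*M} q\,(q\circ\varphi^s)\,d\mu_L\,ds$, which does agree with Ratner's definition \eqref{limitofvariance} and requires decay of correlations (again furnished by the spectral gap). The paper instead takes the more elementary route of showing the family $F_t(\xi)=\frac{1}{t}\log Z_t(\xi)$ is a normal family near $\xi=0$ (via Montel), so that $F_t^{(k)}\to F^{(k)}$ uniformly for $|\xi|\le\delta$; it then reads off $F_t''(0)=\frac{1}{t}\int q_t^2\,d\mu_L\to\sigma_q^2$ directly from the definition and uses a uniformly bounded third derivative to control the Taylor remainder. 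Your Rayleigh--Schr\"odinger computation and the Green--Kubo identity are correct, but they presuppose the very generator spectral theory that your argument has not established. If you replace the generator by the fixed-time operator $\mathcal{L}^{t_1}_\xi$ (where everything is bounded and Kato applies directly), your argument becomes essentially identical to the paper's, and the identification of $\lambda''(0)$ with $\sigma_q^2$ can then be done either by your Green--Kubo route or by the paper's normal-family route; both are valid.
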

  If $q$ is cohomologous to a constant, i.e. there is a smooth function $h$ such that $q=\overline{q}+Xh$, we have:
  \begin{equation}\label{cohomo}
    \left|\frac{1}{t}\int_{0}^{t}q \circ \varphi^{s}(\rho)ds-\overline{q}\right|=\left|\frac{1}{t}\int_{0}^{t}Xh \circ \varphi^{s}(\rho)ds\right|\leq \frac{2\|h\|_{\infty}}{t}.
    \end{equation}

  In this paper, we consider the following symmetric average of $q$:
  \begin{equation}\label{symmetricaverage}
    \langle q \rangle_{T}(\rho)=\frac{1}{T}\int_{-T/2}^{T/2}q \circ \varphi^{t}(\rho)dt.
  \end{equation}
  Combine \eqref{moderatedeviationprinciples} and \eqref{cohomo}, we derive that:
  \begin{corollary}\label{coroMDPA}
  For any smooth function $q \in C^{\infty}(S^*M, \mathbb{R})$, $\varepsilon>0$, $0<c<\frac{\varepsilon^2}{2\sigma_{q}^2}$, and $a(T)$ satisfying the condition in Theorem \ref{MDPA}, and sufficiently large $T$, we have
  \begin{equation}\label{moderatedeviatearea}
    \mu_{L}\left\{\rho \in S^*M \ \Bigg| \ \langle q\rangle_{T}(\rho)-\overline{q}\geq \frac{\varepsilon}{a(T)} \right\}\leq e^{-cTa(T)^{-2}}.
    \end{equation}
  \end{corollary}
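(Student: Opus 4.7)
The plan is to reduce the tail bound for the symmetric average $\langle q\rangle_T$ to the forward-time average appearing in Theorem \ref{MDPA}, and then read off the estimate from the moderate deviation principle, handling separately the degenerate case $\sigma_q=0$.

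By the change of variables $s=t+T/2$,
\begin{equation*}
\langle q\rangle_T(\rho)=\frac{1}{T}\int_{0}^{T}q\circ\varphi^{s}(\varphi^{-T/2}\rho)\,ds,
\end{equation*}
and since $\varphi^{-T/2}$ preserves $\mu_L$, the $\mu_L$-distribution of $\langle q\rangle_T$ coincides with that of $\frac{1}{T}\int_0^T q\circ\varphi^s\,ds$. It therefore suffices to bound
\begin{equation*}
\mu_L\left\{\rho\in S^*M\ \Big|\ \frac{1}{T}\int_0^T q\circ\varphi^s(\rho)\,ds-\overline{q}\geq \frac{\varepsilon}{a(T)}\right\}.
\end{equation*}

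If $q$ is cohomologous to a constant, then $\sigma_q=0$, the condition on $c$ imposes no restriction, and by \eqref{cohomo} the deviation is bounded pointwise by $2\|h\|_\infty/T$. Because the hypothesis $a(T)=o(\sqrt{T})$ gives $T/a(T)\to\infty$, for all sufficiently large $T$ the threshold $\varepsilon/a(T)$ exceeds $2\|h\|_\infty/T$, so the set in question is empty and the bound $e^{-cT/a(T)^2}$ holds trivially.

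Suppose instead that $q$ is not cohomologous to a constant, so $\sigma_q>0$. Applying Theorem \ref{MDPA} to the closed half-line $I=[\varepsilon,\infty)$, for which $\inf_{s\in I} s^2/(2\sigma_q^2)=\varepsilon^2/(2\sigma_q^2)$, yields
\begin{equation*}
\lim_{T\to\infty}\frac{a(T)^2}{T}\log\mu_L\left\{\rho\ \Big|\ a(T)\left(\frac{1}{T}\int_0^T q\circ\varphi^s\,ds-\overline{q}\right)\geq\varepsilon\right\}=-\frac{\varepsilon^2}{2\sigma_q^2}.
\end{equation*}
For any $c<\varepsilon^2/(2\sigma_q^2)$, this forces $\mu_L\{\cdots\}\leq e^{-cT/a(T)^2}$ once $T$ is sufficiently large, which is exactly the claim. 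The only mild subtlety is the applicability of Theorem \ref{MDPA} to an unbounded closed interval; if needed, the pointwise bound $|T^{-1}\int_0^T q\circ\varphi^s-\overline{q}|\leq 2\|q\|_\infty$ reduces the half-line to the bounded interval $[\varepsilon,2\|q\|_\infty a(T)]$, which one covers by $O(a(T))$ unit subintervals $[k,k+1]$ and estimates by the MDP; the resulting geometric sum has leading term $\exp(-\varepsilon^2 T/(2\sigma_q^2 a(T)^2)(1+o(1)))$ and absorbs the polynomial prefactor through the strict inequality $c<\varepsilon^2/(2\sigma_q^2)$.
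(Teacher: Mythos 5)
Your proof is correct and takes essentially the same route the paper intends: the change of variables reduces the symmetric average to the forward one by $\varphi^{T/2}$-invariance of $\mu_L$, the degenerate case $\sigma_q=0$ is handled via \eqref{cohomo} and $T/a(T)\to\infty$, and the non-degenerate case follows by reading off the $\limsup$ from Theorem~\ref{MDPA} applied to $I=[\varepsilon,\infty)$. One small remark: your covering workaround for the unbounded interval is more involved than needed and not quite airtight as stated (one would have to justify uniformity of the MDP rate over the $O(a(T))$ subintervals $[k,k+1]$); the clean resolution is simply that the upper bound in the Gärtner--Ellis Theorem~\ref{GETLDP}, through which Theorem~\ref{MDPA} is proved, already holds for \emph{all} closed sets, in particular for the half-line $[\varepsilon,\infty)$, so the MDP upper bound applies directly without any truncation.
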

  We notice that $a(t)=1$ corresponds to the LDP and $a(t)=t^{\frac{1}{2}}$ corresponds to the CLT. We will give a complete proof of Theorem \ref{MDPA} in Appendix \ref{appendix}, our method comes from Rey-Bellet and Young \cite{RY08}, Wu \cite{WL95}, and Dolgopyat and Sarig \cite{DS23}.
  \begin{remark}
    Waddington \cite{WS96} proved precise asymptotics for large deviations in \(C^2\) transitive Anosov flows. We believe that the same method can be applied to moderate deviations for \(C^2\) transitive Anosov flows to derive precise asymptotics.
  \end{remark}
\subsection{Semiclassical Microlocal Analysis}
In this subsection, we briefly review semiclassical pseudodifferential operators and refer the reader to \cite{ZM, DS, DZ19} for an introduction. In our proof, we need to consider the evolution of the symbol up to time \(t = \mu|\log h|\) for some constant \(\mu > 0\). Therefore, we consider the following exotic symbol classes: for any \(k \in \mathbb{R}\), the exotic symbol classes \(S_\delta^k\left(T^* M\right)\), with \(\delta \in \left[0, \frac{1}{2}\right)\), on a Riemannian manifold \(M\) consist of symbols satisfying the following derivative bounds:
\begin{equation}\label{eq53}
  \sup _{h \in\left(0, 1\right]} h^{\delta(|\alpha|+|\beta|)} \sup _{\substack{x \in K\Subset M \\ \xi \in T_x^* M}}\langle\xi\rangle^{|\beta|-k}\left|\partial_x^\alpha \partial_{\xi}^\beta a(x, \xi ; h)\right|<\infty.
\end{equation}
That is, the derivative $\partial_x^\alpha \partial_{\xi}^\beta a$ is allowed to grow like $h^{-\delta(|\alpha|+|\beta|)}\langle\xi\rangle^{k-|\beta|}$ uniformly on each compact subset $K \Subset M$. We also define $$S^{-\infty}_{\delta}(T^*M)=\bigcap_{k\in \mathbb{R}}S^{k}_{\delta}(T^*M).$$
Now we define a (non-canonical) quantization procedure on a compact manifold $M$:
\begin{definition}\label{defofquanandprin}
    Let $\left(\varphi_j \colon U_j \mapsto V_j \subset \mathbb{R}^d, \chi_j\right)$ be cutoff charts of $M$, i.e., $\chi_j \in C_{0}^{\infty}(U_j)$, and $\chi_j^{\prime}$ functions satisfying $\chi_j^{\prime}=1$ near $\mathrm{supp}(\chi_j)$. Then, for any $a \in S_\delta^k\left(T^* M\right), \delta \in\left[0, \frac{1}{2}\right)$, the operator $\mathrm{Op}_{h}(a)\colon C^{\infty}(M)\mapsto C^{\infty}(M)$ is defined by: 
$$
\mathrm{Op}_h(a):=\sum_{j} \chi_j^{\prime} \varphi_j^* \operatorname{Op}_h^{\mathrm{KN}}\left(\left(\chi_j a\right) \circ \widetilde{\varphi}_j^{-1}\right)\left(\varphi_j^{-1}\right)^* \chi_j^{\prime},
$$
where for $b \in S^{k}_{\delta}(T^*\mathbb{R}^d)$, the Kohn-Nirenberg quantization of the symbol $b$ is the operator $\mathrm{Op}_{h}^{\mathrm{KN}}(b)$ which is defined as 
$$\mathrm{Op}_{h}^{\mathrm{KN}}(b)u(x):=\left(\frac{1}{2\pi h}\right)^{d}\int_{\mathbb{R}^{2d}}e^{\frac{i(x-y)\cdot \xi}{h}}b(x,\xi)u(y)d\xi dy, \quad u\in C^{\infty}(\mathbb{R}^{d}).$$
Here, $\widetilde{\varphi}_j \colon T^*U_j \mapsto T^*V_j\subset T^*\mathbb{R}^d$ is defined by $\widetilde{\varphi}_{j}(x,\xi)=(\varphi_{j}(x), ((d\varphi_{j})_{x}^{t})^{-1}(\xi))$, which is the symplectic lifting of $\varphi_{j}$. For $\delta \in \left[0, \frac{1}{2}\right)$, the pseudodifferential operator classes $\Psi_{\delta}^{k}(M)$ consist of $\mathrm{Op}_{h}(a)+R$, where $a \in S^{k}_{\delta}(T^*M)$, $R=h^{\infty}\Psi^{-\infty}(M)$, and $\Psi^{-\infty}(M)$ denotes operators with smooth integral kernels. When $\delta=0$, we omit the subscript $S^{k}(T^*M):=S^{k}_{0}(T^*M)$ and $\Psi^{k}(M):=\Psi_{0}^{k}(M)$. Conversely, for any $A \in \Psi_{\delta}^{k}(M)$, $\sigma_{h}(A) \in S_{\delta}^{k}(T^*M)$ is called the principal symbol of $A$, if $$A-\mathrm{Op}_{h}\left(\sigma_{h}(A)\right) \in h^{1-2\delta}\Psi_{\delta}^{k-1}(M).$$ This implies that for any $a \in S_{\delta}^{k}(T^*M)$, $a-\sigma_{h}\left(\mathrm{Op}_{h}(a)\right)\in h^{1-2\delta}S_{\delta}^{k-1}(T^*M)$.
\end{definition}
We list some properties of the quantization $\mathrm{Op}_{h}$ and principal symbol map:
\begin{proposition}\label{prop1}
    For any $m,n \in \mathbb{R}$, let $a \in S_\delta^m(T^*M)$ and $b \in S_{\delta^{\prime}}^n(T^*M)$ with $0 \leq \delta^{\prime} \leq \delta<1 / 2$. Then,
    \begin{itemize}
        \item $\mathrm{Op}_{h}(a)\mathrm{Op}_{h}(b)-\mathrm{Op}_{h}(ab) \in h^{1-\delta-\delta^{\prime}} \Psi_{\delta}^{m+n-1}(M)$.
        \item $\left[\mathrm{Op}_{h}(a), \mathrm{Op}_{h}(b)\right]-\frac{h}{i} \mathrm{Op}_{h}(\{a, b\}) \in h^{2\left(1-\delta-\delta^{\prime}\right)} \Psi_{\delta}^{m+n-2}(M)$.
        \item $\mathrm{Op}_{h}(a)^*-\mathrm{Op}_{h}(\overline{a}) \in h^{1-2\delta}\Psi_{\delta}^{m-1}(M)$.
    \end{itemize}
\end{proposition}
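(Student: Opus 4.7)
The three statements are the standard composition, commutator, and adjoint formulas for the Kohn–Nirenberg calculus, extended from the classical class $S^k$ to the exotic class $S^k_\delta$ with $\delta<1/2$. The unifying plan is: (i) reduce everything to the Euclidean model via the chart-by-chart definition of $\mathrm{Op}_h$; (ii) on $\mathbb{R}^d$, develop the $\#$-product (respectively the adjoint symbol) in an asymptotic expansion; (iii) read off the remainders by carefully tracking the derivative losses inherent to $S^k_\delta$.

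First I would treat the Euclidean pieces. For $a\in S^m_\delta(T^*\mathbb{R}^d)$ and $b\in S^n_{\delta'}(T^*\mathbb{R}^d)$, one writes $\mathrm{Op}^{\mathrm{KN}}_h(a)\mathrm{Op}^{\mathrm{KN}}_h(b)=\mathrm{Op}^{\mathrm{KN}}_h(a\#b)$, where
\begin{equation*}
a\#b(x,\xi)=\frac{1}{(2\pi h)^d}\int e^{\tfrac{i}{h}(x-y)\cdot(\xi-\eta)} a(x,\eta)b(y,\xi)\,dy\,d\eta.
\end{equation*}
Taylor expanding $b(y,\xi)$ about $y=x$ and performing (non)stationary phase (equivalently, integrating by parts in $\eta$) yields the asymptotic series
\begin{equation*}
a\#b(x,\xi)\sim \sum_{|\alpha|\geq 0}\frac{(-ih)^{|\alpha|}}{\alpha!}\,\partial_\xi^\alpha a(x,\xi)\,\partial_x^\alpha b(x,\xi).
\end{equation*}
Because each derivative on a symbol in $S^m_\delta$ (resp.\ $S^n_{\delta'}$) costs a factor $h^{-\delta}$ (resp.\ $h^{-\delta'}$), the term of order $|\alpha|=k$ lies in $h^{k(1-\delta-\delta')}S^{m+n-k}_\delta$. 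Truncating after $|\alpha|=0$ and using a Borel summation argument for the remainder gives the composition formula with remainder in $h^{1-\delta-\delta'}\Psi^{m+n-1}_\delta$. Subtracting the two orderings and noting that the $|\alpha|=1$ contribution is exactly $-ih\,\{a,b\}=\tfrac{h}{i}\{a,b\}$ delivers the commutator statement with remainder in $h^{2(1-\delta-\delta')}\Psi^{m+n-2}_\delta$. The adjoint formula comes from the analogous expansion
\begin{equation*}
a^*(x,\xi)\sim \sum_{|\alpha|\geq 0}\frac{h^{|\alpha|}}{i^{|\alpha|}\alpha!}\partial_x^\alpha\partial_\xi^\alpha\overline{a}(x,\xi),
\end{equation*}
whose leading term is $\overline{a}$ and whose first subleading term lies in $h^{1-2\delta}S^{m-1}_\delta$.

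Next I would lift these Euclidean facts to $M$ via the chart decomposition in Definition \ref{defofquanandprin}. Writing $\mathrm{Op}_h(a)=\sum_j A_j$ with $A_j=\chi_j'\varphi_j^*\mathrm{Op}^{\mathrm{KN}}_h((\chi_j a)\circ\widetilde\varphi_j^{-1})(\varphi_j^{-1})^*\chi_j'$, the product $\mathrm{Op}_h(a)\mathrm{Op}_h(b)$ splits into diagonal terms $A_jB_j$ and off-diagonal terms $A_jB_k$ with $j\neq k$. On each diagonal piece, invariance of the principal symbol under symplectic changes of coordinates (via the standard Egorov-type argument in $S^k_\delta$, noting $\delta<1/2$ so that errors in the change-of-variable formula remain in a good class) reduces the question to the Euclidean identities above, giving the principal part $\sigma_h(A_jB_j)=\chi_j'^{\,2}(\chi_j a)(\chi_j b)$ up to an acceptable remainder. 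The off-diagonal terms, where $\chi_j'$ and $\chi_k'$ have supports possibly overlapping but the quantization localizes on different charts, are handled by pseudolocality: the Schwartz kernel of each $A_j$ is $\mathcal{O}(h^\infty)$ off the diagonal, so cross-chart products contribute only $h^\infty\Psi^{-\infty}(M)$ once a partition-of-unity identity like $\sum_j\chi_j=1$ (on the support of $a$) is invoked to collect the diagonal contribution to $ab$. The same chart-by-chart recipe handles the commutator and adjoint assertions, with the arithmetic of the $h$-powers coming from the Euclidean expansions.

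The main obstacle is bookkeeping rather than ideas: one must check that every manipulation (Taylor remainder, integration by parts, coordinate change, Borel resummation) preserves the exotic class $S^k_\delta$ with the sharp $h^{1-\delta-\delta'}$, $h^{2(1-\delta-\delta')}$, $h^{1-2\delta}$ gains. The condition $\delta,\delta'<1/2$ is essential here because it guarantees $1-\delta-\delta'>0$ and $1-2\delta>0$, so the remainders are genuinely lower order and the Borel summation of the asymptotic series converges in the appropriate symbol topology. Once this bookkeeping is in place, the three statements follow directly; these computations are standard and can be found in \cite{ZM, DZ19}, to which I would refer for the technical details rather than reproducing them in full.
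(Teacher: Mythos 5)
The paper does not supply a proof of this proposition; it is presented as a list of standard properties of the exotic calculus $\Psi_\delta^k$ with $\delta<1/2$, and the reader is referred at the start of the subsection to \cite{ZM, DS, DZ19} for the underlying theory. Your sketch reproduces the usual argument one would find there: pass to $\mathbb{R}^d$ via the chart-by-chart quantization, expand the $\#$-product (and the adjoint symbol) asymptotically, observe that the order-$k$ term carries an explicit $h^k$ while the $k$ derivatives landing on $a$ and on $b$ cost $h^{-k\delta}$ and $h^{-k\delta'}$ respectively, so the net gain is $h^{k(1-\delta-\delta')}$ (and $h^{k(1-2\delta)}$ for the adjoint, where all derivatives fall on $a$); subtract the two orderings so the $|\alpha|=1$ contribution becomes $\tfrac{h}{i}\{a,b\}$; finally, eliminate chart-crossing products by pseudolocality, since the Schwartz kernel of each $\mathrm{Op}^{\mathrm{KN}}_h$ piece is $\mathcal{O}(h^\infty)$ off the diagonal. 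The coefficients and signs you quote are correct, in particular $(h/i)^{|\alpha|}=(-ih)^{|\alpha|}$. Two cosmetic points if you were to write this out in full: Borel resummation is not actually needed, since the statement only asks for a one-step (respectively two-step) Taylor expansion whose integral remainder can be estimated directly in $S_\delta^{m+n-1}$; and coordinate invariance of the calculus is the Kuranishi change-of-variables argument rather than an Egorov-type theorem, with the hypothesis $\delta<1/2$ used once more to keep the transformed symbol in the exotic class. Your plan is correct and aligns with the standard justification the paper implicitly relies on.
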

The sharp Gårding inequality shows that:
\begin{theorem}\label{Garding}
    If $a \in S^{0}_{\delta}(T^*M)$ with $a \geq 0$, then there exists $C>0$ such that for any $u \in L^2(M)$:
    $$\Re \langle \mathrm{Op}_{h}(a)u,u\rangle \geq -Ch^{1-2\delta}\|u\|_{L^2}^2.$$
    As a corollary, we have for any $a \in S^{0}_{\delta}(T^*M)$:
    $$\left\|\mathrm{Op}_{h}(a)\right\| \leq \sup_{T^*M} |a(x,\xi)|+\mathcal{O}(h^{1-2\delta}).$$
\end{theorem}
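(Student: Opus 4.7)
The plan is to follow the classical route to the sharp Gårding inequality via the Wick (anti-Wick) quantization, which adapts cleanly to the exotic class $S^{0}_{\delta}(T^*M)$ with $\delta \in [0,1/2)$. The Kohn--Nirenberg quantization of a nonnegative symbol need not be a positive operator, so one introduces an intermediate quantization that is manifestly positive and then compares the two at the symbol level. Using the chart data $(\varphi_j, \chi_j, \chi_j')$ from Definition \ref{defofquanandprin}, the problem reduces to a local statement on $\mathbb{R}^d$: nonnegativity of $a$ passes to the local symbols $(\chi_j^2 a) \circ \widetilde{\varphi}_j^{-1}$, and cross terms between different charts produce only smoothing $h^{\infty}\Psi^{-\infty}(M)$ contributions whose operator norms are absorbed into the final $\mathcal{O}(h^{1-2\delta})$ error.

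Next, on $\mathbb{R}^d$ I would introduce the Wick quantization
\begin{equation*}
\mathrm{Op}^{\mathrm{Wick}}_h(a) := \int_{T^*\mathbb{R}^d} a(x,\xi)\, \Pi_{(x,\xi)}\, \frac{dx\,d\xi}{(2\pi h)^d},
\end{equation*}
where $\Pi_{(x,\xi)}$ is the orthogonal projector onto the $L^2$-normalized coherent state centered at $(x,\xi)$. If $a \geq 0$, then $\mathrm{Op}^{\mathrm{Wick}}_h(a) \geq 0$ manifestly. The crucial comparison is
\begin{equation*}
\mathrm{Op}^{\mathrm{Wick}}_h(a) = \mathrm{Op}^{\mathrm{KN}}_h(a) + R_h, \qquad \|R_h\|_{L^2 \to L^2} = \mathcal{O}(h^{1-2\delta}),
\end{equation*}
obtained by writing $\mathrm{Op}^{\mathrm{Wick}}_h(a) = \mathrm{Op}^{\mathrm{KN}}_h(a \ast G_h)$ with $G_h$ a Gaussian on the phase-space scale $\sqrt{h}$, and Taylor expanding $a - a \ast G_h$. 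The first-order terms vanish by Gaussian symmetry, and the second-order remainder has the form $h \cdot \partial^2 a$; combined with the exotic bound $\|\partial^\alpha a\|_\infty \lesssim h^{-\delta|\alpha|}$ and Calderón--Vaillancourt applied to the error symbol, this yields the stated $h^{1-2\delta}$ bound. Taking real parts then gives $\Re\langle\mathrm{Op}_h(a)u,u\rangle \geq -Ch^{1-2\delta}\|u\|_{L^2}^2$.

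Finally, the operator norm corollary follows from the inequality itself: set $M := \sup_{T^*M}|a|$ and apply sharp Gårding to the nonnegative symbol $b := M^2 - |a|^2 = M^2 - a\overline{a}$. By Proposition \ref{prop1},
\begin{equation*}
\mathrm{Op}_h(b) = M^2 I - \mathrm{Op}_h(a)\mathrm{Op}_h(a)^* + \mathcal{O}_{L^2 \to L^2}(h^{1-2\delta}),
\end{equation*}
so sharp Gårding gives $\|\mathrm{Op}_h(a)^* u\|^2 \leq M^2\|u\|^2 + Ch^{1-2\delta}\|u\|^2$, whence $\|\mathrm{Op}_h(a)\| = \|\mathrm{Op}_h(a)^*\| \leq M + \mathcal{O}(h^{1-2\delta})$ after taking square roots. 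The main technical obstacle is the quantitative Wick-versus-Kohn--Nirenberg comparison in the exotic class: one must verify that the $\sqrt{h}$ Gaussian scale combined with the $h^{-\delta}$ cost per derivative really yields the error $h^{1-2\delta}$ uniformly on compacts $K \Subset M$, which is precisely the reason the threshold $\delta < 1/2$ is sharp.
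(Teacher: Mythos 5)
The paper does not actually prove Theorem \ref{Garding}: it is stated in Section \ref{sec2} as a standard background fact of semiclassical microlocal analysis, with the reader referred to \cite{ZM, DS, DZ19}. So there is no internal proof to compare against. Your Wick/anti-Wick quantization argument is one of the two standard routes (the other being FBI-transform or almost-analytic methods), and it is substantively correct for the exotic class $S^0_\delta$ with $\delta \in [0,1/2)$: the Gaussian smoothing at scale $\sqrt{h}$ combined with the loss $h^{-\delta|\alpha|}$ per derivative gives an error symbol in $h^{1-2\delta}S^0_\delta$, hence $\mathcal{O}(h^{1-2\delta})$ in operator norm by Calder\'on--Vaillancourt, exactly as you say. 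Two technical points should be corrected. First, the exact smoothing identity relates the Wick quantization to the \emph{Weyl} quantization, $\mathrm{Op}^{\mathrm{Wick}}_h(a) = \mathrm{Op}^{W}_h(a \ast G_h)$, not to Kohn--Nirenberg; converting Weyl to the paper's Kohn--Nirenberg quantization adds a further correction of the form $\tfrac{h}{2i}\,\partial_x\!\cdot\partial_\xi a + \cdots$, which for $a\in S^0_\delta$ again lies in $h^{1-2\delta}\Psi^{-1}_\delta$ and is absorbed by the same error term, but your identity as written is not exact. Second, in the corollary, passing from $\|\mathrm{Op}_h(a)^*u\|^2 \leq (M^2 + Ch^{1-2\delta})\|u\|^2$ with $M=\sup_{T^*M}|a|$ to $\|\mathrm{Op}_h(a)\| \leq M + \mathcal{O}(h^{1-2\delta})$ requires $M$ to be bounded away from $0$ so that the square root can be Taylor expanded; subadditivity of the square root alone gives only $M + \mathcal{O}\bigl(h^{(1-2\delta)/2}\bigr)$. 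With implied constants allowed to depend on the fixed symbol $a$ (as is the convention for the statement) this is harmless, but it is worth flagging since the paper applies the estimate to $h$-dependent symbols such as $q^T$ and $\widehat{q}^{\,T}$.
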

For our proof, we also need the local form of sharp Gårding's inequality, similar to the local form of the Calderón-Vaillancourt inequality (see \cite{AN2}):
\begin{theorem}\label{localCVineq}
    Let $M$ be a $d$-dimensional compact Riemannian manifold. Take $a \in S_\delta^2(T^*M)$ with $0 \leq \delta<1/2$. Let $I$ be an open interval with compact closure $\overline{I}\subset \mathbb{R}^{+}$ and let $\lambda$ belong to $I$. Then, for all $u \in H^2(M)$:
    \begin{equation}
      \begin{gathered}
    \|\mathrm{Op}_{h}(a) u\|_{L^2} \leq \left(\sup\limits_{p^{-1}(I)} |a|+\mathcal{O}(h^{1-2\delta})\right)\|u\|_{L^2}+\mathcal{O}(1)\|(P-\lambda) u\|_{L^2}\\
    \Re \langle \mathrm{Op}_{h}(a) u, u\rangle \geq \left(\inf\limits_{p^{-1}(I)} |a|-\mathcal{O}(h^{1-2\delta})\right)\|u\|_{L^2}^2-\mathcal{O}(1)\|(P-\lambda) u\|_{L^2}\|u\|_{L^2}.
      \end{gathered}
    \end{equation}
    Here, $P\in \Psi^2(M)$ with $\sigma_{h}(P)(x,\xi)=p(x,\xi)=\frac{1}{2}|\xi|_{x}^2$.
\end{theorem}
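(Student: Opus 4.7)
The overall strategy is to use a microlocal cutoff to separate the contribution of $\mathrm{Op}_h(a)$ near the energy shell $p^{-1}(\overline{I})$, where the global bounds of Theorem \ref{Garding} are available, from the contribution away from the shell, where $P-\lambda$ is elliptic and hence can be inverted modulo a smoothing remainder. Concretely, pick $\chi \in C_c^{\infty}(\mathbb{R}^{+})$ with $\chi \equiv 1$ on a small neighborhood of $\overline{I}$, and such that $\sup_{p^{-1}(\mathrm{supp}\,\chi)}|a|$ is arbitrarily close to $\sup_{p^{-1}(\overline{I})}|a|$ (by continuity of $|a|$) and similarly $\inf_{p^{-1}(\mathrm{supp}\,\chi)}|a|$ is arbitrarily close to $\inf_{p^{-1}(\overline{I})}|a|$. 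Then $\chi(p) \in S^{-\infty}_{\delta}(T^{*}M)$ since it is compactly supported in $\xi$, and $p - \lambda$ is elliptic of order $2$ on $\mathrm{supp}(1-\chi(p))$. A standard parametrix construction (iterated use of Proposition \ref{prop1}) yields $B \in \Psi^{-2}_{\delta}(M)$ with
\begin{equation*}
B(P-\lambda) = I - \mathrm{Op}_{h}(\chi(p)) + R, \qquad R = \mathcal{O}(h^{\infty})_{L^{2}\to L^{2}}.
\end{equation*}

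For the upper bound, decompose $\mathrm{Op}_h(a) u = \mathrm{Op}_h(a)\mathrm{Op}_h(\chi(p))u + \mathrm{Op}_h(a)(I-\mathrm{Op}_h(\chi(p)))u$. By the composition rule of Proposition \ref{prop1}, the first piece equals $\mathrm{Op}_h(a\chi(p))u$ modulo an operator in $h^{1-2\delta}\Psi_{\delta}^{1}$ acting on $u$; since $a\chi(p) \in S^{-\infty}_{\delta}$ with $L^{\infty}$-norm at most $\sup_{p^{-1}(\mathrm{supp}\chi)}|a|$, the global Calderón–Vaillancourt bound from Theorem \ref{Garding} gives the desired control by $(\sup_{p^{-1}(I)}|a|+\mathcal{O}(h^{1-2\delta}))\|u\|_{L^{2}}$. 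The second piece becomes $\mathrm{Op}_{h}(a)B(P-\lambda)u + \mathcal{O}(h^{\infty})\|u\|_{L^{2}}$, and $\mathrm{Op}_{h}(a)B \in \Psi^{0}_{\delta}(M)$ is $L^{2}$-bounded, producing the $\mathcal{O}(1)\|(P-\lambda)u\|_{L^{2}}$ contribution.

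For the Gårding-type lower bound, write $m := \inf_{p^{-1}(\overline{I})}|a|$ and split
\begin{equation*}
\Re \langle \mathrm{Op}_{h}(a) u, u\rangle = \Re \langle \mathrm{Op}_{h}(a\chi(p))u,u\rangle + \Re \langle \mathrm{Op}_{h}(a(1-\chi(p)))u,u\rangle.
\end{equation*}
Apply Theorem \ref{Garding} to the symbol $a\chi(p) - m\chi(p)$, which is nonnegative (up to arbitrarily small error absorbed in $\mathcal{O}(h^{1-2\delta})$) on all of $T^{*}M$ by the choice of $\chi$, to obtain
\begin{equation*}
\Re\langle \mathrm{Op}_{h}(a\chi(p))u,u\rangle \;\geq\; m\,\Re\langle\mathrm{Op}_{h}(\chi(p))u,u\rangle - \mathcal{O}(h^{1-2\delta})\|u\|_{L^{2}}^{2}.
\end{equation*}
Using $\Re\langle \mathrm{Op}_{h}(\chi(p))u,u\rangle = \|u\|_{L^{2}}^{2} - \Re\langle(I-\mathrm{Op}_{h}(\chi(p)))u,u\rangle$ and substituting $I-\mathrm{Op}_{h}(\chi(p)) = B(P-\lambda) + \mathcal{O}(h^{\infty})$ from the parametrix, the correction is bounded by $\mathcal{O}(1)\|(P-\lambda)u\|_{L^{2}}\|u\|_{L^{2}}$. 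The off-shell term $\Re\langle \mathrm{Op}_{h}(a(1-\chi(p)))u,u\rangle$ is handled identically using the parametrix for the factor $1-\chi(p)$. Combining produces the second inequality. The main technical obstacle is coordinating the three approximations that must all be controlled simultaneously: (i) the replacement of $\sup / \inf$ over $p^{-1}(\mathrm{supp}\,\chi)$ by the corresponding extrema over $p^{-1}(\overline{I})$, which requires choosing $\chi$ carefully and absorbing the slack into $\mathcal{O}(h^{1-2\delta})$; (ii) the sharpness of the Gårding remainder $\mathcal{O}(h^{1-2\delta})$, which degrades as $\delta \to 1/2$ and forces the proof to stay within $S^{2}_{\delta}$ with $\delta < 1/2$; and (iii) ensuring that the parametrix $B$ and its adjoint remain in the $\delta$-exotic class so that the composition $\mathrm{Op}_{h}(a)B$ is genuinely $L^{2}$-bounded with norm independent of $h$.
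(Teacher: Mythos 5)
The paper itself does not supply a proof of Theorem \ref{localCVineq}: it is stated as a sharp G\aa{}rding analogue of the ``local Calder\'on--Vaillancourt inequality'' with a pointer to Anantharaman's paper, so your proposal is a reconstruction rather than something to be matched against an in-text argument. The overall strategy --- split $\mathrm{Op}_h(a)$ using a compactly supported energy cutoff $\chi(p)$, control the on-shell piece with the global bounds of Theorem \ref{Garding}, and control the off-shell piece with an elliptic parametrix for $P-\lambda$ --- is the right one.

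There is, however, a genuine gap in the choice of the cutoff $\chi$. You take $\chi\equiv1$ on a neighborhood of $\overline{I}$, so $\operatorname{supp}\chi\supsetneq\overline{I}$, and you pay $\sup_{p^{-1}(\operatorname{supp}\chi)}|a|$, which you then propose to replace by $\sup_{p^{-1}(\overline I)}|a|$ ``by continuity,'' promising in your item~(i) to absorb the slack into $\mathcal{O}(h^{1-2\delta})$. That cannot be done. For a fixed $\chi$, the slack $\sup_{p^{-1}(\operatorname{supp}\chi)}|a|-\sup_{p^{-1}(I)}|a|$ is an $\mathcal{O}(1)$ quantity, not $\mathcal{O}(h^{1-2\delta})$: take $a$ vanishing on $p^{-1}(\overline I)$ but of size one just outside. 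To force the slack down to $\mathcal{O}(h^{1-2\delta})$ you would need the annulus $\operatorname{supp}\chi\setminus\overline{I}$ to have width $\lesssim h^{1-\delta}$ (since $|\nabla a|=\mathcal{O}(h^{-\delta})$ for $a\in S^2_\delta$); but then $\chi(p)$ lies in $S_{1-\delta}$ with $1-\delta>\tfrac12$, outside the range where the $S_\delta$ calculus, the $\mathcal{O}(h^{1-2\delta})$ G\aa{}rding remainder, and the uniform parametrix bounds are available. The correct choice is the opposite one: pick $\chi\in C_c^\infty(I)$ with $\chi\equiv1$ on a neighborhood of the single point $\lambda$ (possible since $\lambda\in I$ and $I$ is open). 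Then $\sup_{p^{-1}(\operatorname{supp}\chi)}|a|\le\sup_{p^{-1}(I)}|a|$ and $\inf_{p^{-1}(\operatorname{supp}\chi)}a\ge\inf_{p^{-1}(I)}a$ with no approximation whatsoever, while $p-\lambda$ is still bounded away from zero on $\operatorname{supp}(1-\chi(p))$ and elliptic of order two at infinity, so the parametrix $B$ with $B(P-\lambda)=I-\mathrm{Op}_h(\chi(p))+\mathcal{O}(h^\infty)$ exists with an $\mathcal{O}(1)$ norm (constant depending only on $d(\lambda,\partial I)$). With that modification, the rest of your argument --- including the $L^2$-boundedness of $\mathrm{Op}_h(a)B\in\Psi^0_\delta$ and the treatment of the off-shell term --- goes through.
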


In what follows, we always assume that $M$ is a $d$-dimensional compact Anosov manifold. Let the Hamiltonian be $p(x,\xi)=\frac{1}{2}|\xi|_{x}^2$ on $T^*M$, and note that the geodesic flow $\varphi^{t}$ is generated by the Hamiltonian vector field $X$ associated with $p$. Note that $P \in \Psi^2(M)$ with $\sigma_{h}(P)=p$. We fix an arbitrary $C>0$ as the width of the spectral window in \eqref{eq9}.
\section{Averaging and Perturbations}\label{sec3}
Since the conjugation of the operator $P(z,h)$ does not change the spectrum of $P(z,h)$, we follow the construction of the conjugate operator as outlined in Sjöstrand \cite{SJ1}. Recall that the operator $A_T$ is defined as $A_{T}=\mathrm{Op}_{h}\left(e^{g_T}\right)$ (see Definition \ref{defofquanandprin}). The function $g_T$ is constructed as follows. First, we define
\begin{equation}
\widetilde{g}_T := \frac{1}{2} \int_0^{T / 2}\left(\frac{2 s}{T}-1\right) q \circ \varphi^s \, d s + \frac{1}{2} \int_{-T / 2}^0\left(\frac{2 s}{T}+1\right) q \circ \varphi^s \, d s.
\end{equation}
The function $\widetilde{g}_T$ satisfies $\left\{p, \widetilde{g}_T\right\}=q-\langle q\rangle_T$ on $T^*M\setminus 0$. We fix a small $0<\epsilon_{0}<1/8$, and choose $\chi_{0}(t) \in C_{0}^{\infty}(\mathbb{R})$ such that $\chi_{0}=1$ on $\left[\frac{1}{2}-\epsilon_{0},\frac{1}{2}+\epsilon_0\right]$ and $\mathrm{supp}(\chi_{0})\subset \left(\frac{1}{2}-2\epsilon_{0},\frac{1}{2}+2\epsilon_0\right)$. Then we define 
\begin{equation}\label{eq11}
  g_T(x,\xi) = \widetilde{g}_{T}(x,\xi) \chi_{0}(p(x,\xi)).
\end{equation}
Thus, for any fixed $T>0$, we have $e^{g_{T}}(x,\xi) \in S^{0}(T^*M)$, and $e^{g_{T}}$ is strictly positive on $T^*M$. By the Calderón-Vaillancourt inequality and Gårding's inequality, $A_{T}=\mathrm{Op}_{h}\left(e^{g_T}\right)$ is bounded and invertible on $L^2(M)$. In Anantharaman \cite{AN2}, she considers the long time $T$ such that $T$ is close to the Ehrenfest time on $M$. We define the maximal expansion rate of geodesic flow on the energy shell and cosphere bundle respectively as follows,
\begin{equation}\label{maximalexpansion}
  \begin{gathered}
\Lambda:=\lim _{t \rightarrow \infty} \sup _{\left|p(\rho)-\frac{1}{2}\right|\leq 2\epsilon_{0}} \frac{1}{t} \log \left\|d\varphi_{\rho}^{t}\right\| \\
\Lambda_{0}:=\lim _{t \rightarrow \infty} \sup _{\rho \in S^*M} \frac{1}{t} \log \left\|d\varphi_{\rho}^{t}\right\|.
  \end{gathered}
\end{equation}
Now we fix $\epsilon>0$, and the Ehrenfest time is defined as follows:
\begin{equation}
    T = \frac{(1-4\epsilon) |\log h|}{\Lambda}.
\end{equation}
\begin{proposition}[Anantharaman \cite{AN2}]
There exists a bounded and invertible operator on $L^2(M)$, $A_T \in \Psi_{\frac{1}{2}-\epsilon}^{0}(M)$, such that for any $\left|z-\frac{1}{2}\right|=\mathcal{O}(h)$, we have
\begin{equation}
  \mathcal{P}_T(z,h) := A_T^{-1}(P + i h Q(z)) A_T = P + i h Q_{T} + h \widetilde{R}_T(z),
\end{equation}
where $Q_{T}=\mathrm{Op}_{h}\left(q^T\right) \in \Psi_{\frac{1}{2}-\epsilon}^{1}(M)$, $q^T \in S^{1}_{\frac{1}{2}-\epsilon}(T^*M)$, $q^{T} = \langle q\rangle_T$ in $p^{-1}\left(\left[\frac{1}{2}-\epsilon_{0},\frac{1}{2}+\epsilon_0\right]\right)$, and $\widetilde{R}_T(z)\in h^{\epsilon} \Psi_{\frac{1}{2}-\epsilon}^{1}(M)$ depends holomorphically on $z$. 
\end{proposition}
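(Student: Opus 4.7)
The plan is to carry out a semiclassical conjugation by $A_T=\mathrm{Op}_h(e^{g_T})$ and expand the conjugate using the symbolic calculus from Proposition \ref{prop1}, arranging that the commutator $[P,A_T]$ exactly cancels the oscillatory part of $ihQ(z)$ on the energy shell, leaving only the dynamical average $\langle q\rangle_T$ plus controllable remainders.

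First, I would verify that $g_T,\,e^{g_T}\in S^0_{\frac{1}{2}-\epsilon}(T^*M)$. Since $\widetilde{g}_T$ averages $q\circ\varphi^s$ over $|s|\le T/2$, each derivative of order $|\alpha|$ produces a factor at most $\|d\varphi^s\|^{|\alpha|}\le e^{\Lambda|s||\alpha|}$ by the Anosov condition, and with $T=(1-4\epsilon)|\log h|/\Lambda$ this yields $\partial^\alpha\widetilde g_T=\mathcal{O}(h^{-(\frac{1}{2}-2\epsilon)|\alpha|})$, comfortably inside $S^0_{\frac{1}{2}-\epsilon}$. The cut-off $\chi_0(p)$ localizes $g_T$ to a compact $\xi$-shell, and the Faà di Bruno formula transfers the same estimates to $e^{g_T}$. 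The Calderón--Vaillancourt theorem then yields $L^2$-boundedness of $A_T$, while the pointwise positivity $e^{g_T}>0$ combined with an elliptic parametrix construction produces $A_T^{-1}\in\Psi^0_{\frac{1}{2}-\epsilon}(M)$ with principal symbol $e^{-g_T}$.

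Next, I expand
\[
A_T^{-1}(P+ihQ(z))A_T=P+ihQ(z)+A_T^{-1}[P,A_T]+ihA_T^{-1}[Q(z),A_T].
\]
The crucial algebraic identity is $\{p,g_T\}=\chi_0(p)(q-\langle q\rangle_T)$, which follows from $\{p,\widetilde{g}_T\}=q-\langle q\rangle_T$ (an integration-by-parts calculation from the definition of $\widetilde{g}_T$) together with $\{p,\chi_0(p)\}=0$, so that $\{p,e^{g_T}\}=e^{g_T}\chi_0(p)(q-\langle q\rangle_T)$. Applying Proposition \ref{prop1} with $\delta=0$ for $P,Q(z)$ and $\delta=\frac{1}{2}-\epsilon$ for $A_T$, and then peeling off the factor $e^{g_T}$ on the left via the composition formula, yields
\[
A_T^{-1}[P,A_T]=-ih\,\mathrm{Op}_h(\chi_0(p)(q-\langle q\rangle_T))+\mathcal{O}(h^{1+2\epsilon})\Psi^0_{\frac{1}{2}-\epsilon},
\]
while $ihA_T^{-1}[Q(z),A_T]$ lies in a class of order $\mathcal{O}(h^{1+\epsilon})\Psi^1_{\frac{1}{2}-\epsilon}$ by the same proposition. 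Setting $q^T:=(1-\chi_0(p))q+\chi_0(p)\langle q\rangle_T$ (which is independent of $z$ and equals $\langle q\rangle_T$ on $p^{-1}([\frac{1}{2}-\epsilon_0,\frac{1}{2}+\epsilon_0])$), and noting that $q_z-q=\mathcal{O}(h)$ in $S^1$ by the holomorphy of $Q(z)$ on $|z-\frac{1}{2}|=\mathcal{O}(h)$, all the leftover terms assemble into a holomorphic $\widetilde{R}_T(z)\in h^{\epsilon}\Psi^1_{\frac{1}{2}-\epsilon}(M)$.

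The main technical obstacle is the first step, because the sup-norm of $g_T$ grows like $\mathcal{O}(T)=\mathcal{O}(|\log h|)$, so $e^{g_T}$ is a priori of size $h^{-N}$ for some $N$. One must confirm that the derivative estimates coming from the Anosov expansion dominate this pointwise growth and that the resulting class $S^0_{\frac{1}{2}-\epsilon}$ is stable under the composition/commutator formulas of Proposition \ref{prop1} with the expected gain of $h^{1+2\epsilon}$ preserved after left-multiplication by $A_T^{-1}$. Once the symbol-class membership of $e^{g_T}$ and the invertibility of $A_T$ are in place, the identification of $q^T$ and the bookkeeping of error terms is a routine symbolic computation.
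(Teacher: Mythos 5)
The overall strategy you describe — conjugate by $A_T=\mathrm{Op}_h(e^{g_T})$, exploit the cohomological identity $\{p,g_T\}=\chi_0(p)\left(q-\langle q\rangle_T\right)$, and expand via the symbolic calculus of Proposition \ref{prop1} — is indeed the argument of Sjöstrand and Anantharaman to which the paper defers. However, there is a real gap at your first step, one that your final paragraph flags as ``one must confirm'' but does not close. The estimate $\partial^\alpha\widetilde g_T=\mathcal{O}\left(h^{-(\frac12-2\epsilon)|\alpha|}\right)$ is correct for $|\alpha|\ge 1$ (the exponential $e^{\Lambda|s||\alpha|}$ dominates the $\int_{|s|\le T/2}$), but it is false for $|\alpha|=0$: one only has $\|\widetilde g_T\|_\infty=\mathcal{O}(T)=\mathcal{O}(|\log h|)$, and this is sharp, for instance along a closed geodesic whose Birkhoff average of $q$ differs from $\overline q$. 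Consequently $\|e^{g_T}\|_\infty\sim h^{-c}$ for a constant $c>0$ depending on $\|q\|_\infty/\Lambda$, so $e^{g_T}$ does \emph{not} lie in $S^0_{\frac12-\epsilon}(T^*M)$ as defined by \eqref{eq53}, which requires uniform boundedness at order zero. Your appeal to Calderón--Vaillancourt then only yields $\|A_T\|=\mathcal{O}(h^{-c})$, your elliptic parametrix for $A_T^{-1}$ is similarly $\mathcal{O}(h^{-c})$, and a blind application of the composition formula to $A_T^{-1}[P,A_T]$ produces remainders polluted by $h^{-2c}$ factors that need not be $o(1)$.

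What actually rescues the argument is the \emph{cancellation} of the exponential weights in the conjugated symbol, and this must be tracked term by term rather than estimated by separate seminorms of $e^{g_T}$ and $e^{-g_T}$. Every term in the Moyal-type expansion of $A_T^{-1}[P,A_T]$ has the form $e^{-g_T}\cdot\left(\text{polynomial in derivatives of }e^{g_T}\right)$, and since $\partial^\alpha e^{g_T}=e^{g_T}\cdot\left(\text{polynomial in }\partial^{\le\alpha}g_T\right)$, the factor $e^{\pm g_T}$ cancels identically, leaving only $\partial^\alpha g_T$ with $|\alpha|\ge 1$, which do satisfy the $S^0_{\frac12-\epsilon}$ bounds without loss. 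Carrying this cancellation through the full expansion — for the parametrix of $A_T$, for $A_T^{-1}[P,A_T]$, and for $A_T^{-1}[Q(z),A_T]$ — is the essential technical content of the cited proposition and is missing from your proposal. Once that is in place, the remaining steps you sketch (the identity $\{p,\widetilde g_T\}=q-\langle q\rangle_T$, the definition of $q^T$, the $\mathcal{O}(h)$ comparison of $q_z$ with $q$, and the holomorphy of $\widetilde R_T(z)$) are correct.
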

For any $\left|z-\frac{1}{2}\right|=\mathcal{O}(h)$, by Theorem \ref{localCVineq}, for any $u \in L^2(M)$, we have:
\begin{equation}\label{eq31}
  \|\widetilde{R}_T(z) u\|_{L^2} = \mathcal{O}\left(h^{\epsilon}\right)\left(\|u\|_{L^2} + \|\left(2P-1\right)\|_{L^2}\right).
\end{equation}
In the following context, we write $\mathcal{P}_{T} = \mathcal{P}_T(z,h)$ when it cannot cause confusion. Next, we construct a perturbation $\widetilde{\mathcal{P}}_{T}$ of $\mathcal{P}_{T}$ such that there is good control over $(\widetilde{\mathcal{P}}_{T} - z)^{-1}$ and the trace class norm $\left\|\widetilde{\mathcal{P}}_{T} - \mathcal{P}_{T}\right\|_{\mathrm{tr}}$. We want to change the poles of $(z-\mathcal{P}_T(z,h))^{-1}$ to the zeros of the determinant of the trace class operator in a given open set $\Omega_{h}$. Thus, we can apply Jensen's formula to give an upper bound on the number of poles in $\Omega_{h}$. Our construction also follows Sjöstrand \cite{SJ1} and Anantharaman \cite{AN2}; however, our domain $\Omega_{h}$ depends on $h$, so we must deal with the $h$ dependence carefully. \\
\indent We choose a width $w(h)$ satisfying $1/w(h) = o\left(|\log h|^{1/2}\right)$ as $h \to 0$. Now we construct a smooth function $\chi(E) \in C^{\infty}(\mathbb{R}, \mathbb{R})$ satisfying:
\begin{itemize}
  \item $\chi(E) = E$ for $E \leq 0$, and $\chi(E) = \frac{1}{2}$ for $E \geq 1$;\\
  \item $\chi(E) \leq \frac{1}{2}$ for all $E \in \mathbb{R}$;\\
  \item $0 \leq \chi^{\prime}(E) \leq 1$ and $\mathrm{supp}(\chi^{\prime}(E)) \subset [0,1]$.
\end{itemize}
Now we choose fix an arbitrary $0<\kappa<1$, we define an $h$-dependent function $\chi_{h}(E) \in C^{\infty}(\mathbb{R}, \mathbb{R})$: 
\begin{equation}\label{eq20}
  \chi_{h}(E) = \frac{(1-\kappa)w(h)}{2}\chi\left(\frac{E - \overline{q} - \kappa w(h)}{\frac{1-\kappa}{2}w(h)}\right) + \overline{q} + \kappa w(h).
\end{equation}
It satisfies:
\begin{itemize}
  \item $\chi_{h}(E) = E$ for $E \leq \overline{q} + \kappa w(h)$; \\ 
  \item $\chi_{h}(E) \leq \overline{q} + \frac{1+3\kappa}{4}w(h)$ for all $E \in \mathbb{R}$;\\
  \item $0 \leq \chi^{\prime}_{h}(E) \leq 1$ and $\mathrm{supp}(\chi^{\prime}_{h}(E)) \subset \left[\overline{q} + \kappa w(h), \overline{q} + \frac{1+\kappa }{2}w(h)\right]$.
\end{itemize}
Furthermore, for any $k \in \mathbb{N}^{+}$ and any $E \in \mathbb{R}$, 
$$\frac{\partial^{k} \chi_{h}(E)}{\partial E^{k}} = \mathcal{O}\left(\frac{1}{w(h)^{k-1}}\right) = \mathcal{O}\left(|\log h|^{\frac{k-1}{2}}\right).$$
Let $\delta = \frac{1-\epsilon}{2} > \frac{1}{2} - \epsilon$. By Faà di Bruno's formula, it follows that $\chi_{h}(q^T) \in S_{\delta}^{1}(T^*M)$. Let $$\widehat{q}^{T}(\rho) = \left(q^{T}(\rho) - \chi_{h}(q^T)\right)\chi_{0}(\rho), \quad \rho \in T^*M,$$ 
see Figure \ref{differenttruncate}. We have $0 \leq \widehat{q}^{T} \in S^{-\infty}_{\delta}(T^*M)$. Then we define
\begin{equation}
  \mathcal{Q}_{T} := \frac{1}{2}\left(\mathrm{Op}_{h}\left(\widehat{q}^{T}(x,\xi)\right) + \left(\mathrm{Op}_{h}\left(\widehat{q}^{T}(x,\xi)\right)\right)^{*}\right) \in \Psi_{\delta}^{-\infty}(M).
\end{equation}
It follows that $\mathcal{Q}_{T}$ is a self-adjoint operator on $L^2(M)$. 

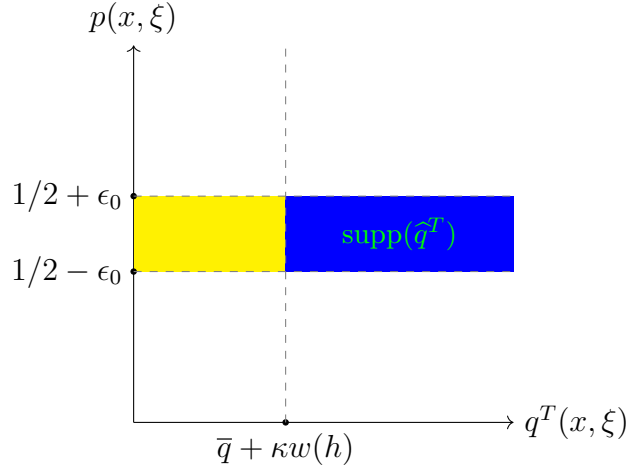
\begin{figure}
  \centering
\begin{tikzpicture}[xscale=5, yscale=5]
  % 设置坐标轴
  \draw[->] (0,0) -- (1,0) node[right] {$q^T(x,\xi)$};
  \draw[->] (0,0) -- (0,1) node[above] {$p(x,\xi)$};
  
  % 红色四边形
  \fill[yellow] (0,0.4) -- (0,0.6) -- (0.4,0.6) -- (0.4,0.4) -- cycle;

  % 蓝色四边形
  \fill[blue] (0.4,0.4) -- (0.4,0.6) -- (1,0.6) -- (1,0.4) -- cycle;
  \filldraw (0.4,0) circle (0.2pt) node[below]{$\overline{q}+\kappa w(h)$};
  \filldraw (0,0.4) circle (0.2pt) node[left]{$1/2-\epsilon_{0}$};
  \filldraw (0,0.6) circle (0.2pt) node[left]{$1/2+\epsilon_{0}$};
  % 添加辅助网格线
  \draw[dashed, gray] (0.4,0) -- (0.4,1);
  \draw[dashed, gray] (0,0.4) -- (1,0.4);
  \draw[dashed, gray] (0,0.6) -- (1,0.6);
  \node[font=\small, text=green] at (0.7, 0.5) {\(\mathrm{supp}(\widehat{q}^{T})\)};
  % % 标注关键点
  % \foreach \x/\y in {0/0.4, 0/0.6, 0.6/0.4, 0.6/0.6} {
  %     \node[circle, fill=black, inner sep=1pt] at (\x,\y) {};
  % }
\end{tikzpicture}
\caption{The horizontal axis represents \(q^{T}(x,\xi)\), and the vertical axis represents \(p(x,\xi)\). The colored region represents the energy shell \(p^{-1}\left(\left[\frac{1}{2}-\epsilon_{0},\frac{1}{2}+\epsilon_{0}\right]\right)\), where \(q^{T} = \langle q \rangle_{T}\) in this region. The yellow region represents the part where \(\langle q \rangle_{T} \leq \overline{q} + \kappa w(h)\), and in this region, \(q^{T} = \chi_{h}(q^{T})\). The blue region represents the part where \(\langle q \rangle_{T} \geq \overline{q} + \kappa w(h)\), and this region contains \(\mathrm{supp}\left(\widehat{q}^{T}\right)\).}
\label{differenttruncate}
\end{figure}

Let $0 \leq f \in \mathcal{S}(\mathbb{R})$ with $f \geq 1$ on $[-10C, 10C]$, and $\widehat{f} \in C_0^{\infty}(\mathbb{R})$, where $\widehat{f}(t) = \int_{\mathbb{R}} e^{-i t E} f(E) \, d E$ is the Fourier transform. Define
\begin{equation}
\widetilde{\mathcal{P}}_{T}(z) = P + i h \widehat{Q}_T + h \widetilde{R}_T(z),
\end{equation}
with
\begin{equation}
\widehat{Q}_T = Q_T - f\left(\frac{2 P - 1}{h}\right)\mathcal{Q}_{T} f\left(\frac{2 P - 1}{h}\right).
\end{equation}
Similar perturbations based on $f\left(\frac{2 P - 1}{h}\right)$ are used by Bony \cite{JFB}. We note that
$$f\left(\frac{2 P - 1}{h}\right) = \frac{1}{2\pi} \int_{\mathbb{R}} e^{it\frac{2P-1}{h}} \widehat{f}(t) \, dt,$$
so $f\left(\frac{2 P - 1}{h}\right)$ is a bounded and trace class operator on $L^2(M)$. Furthermore, we have
\begin{equation}\label{eq58}
  \left\|f\left(\frac{2P-1}{h}\right)\right\| = \mathcal{O}(1), \quad \left\|f\left(\frac{2P-1}{h}\right)\right\|_{\mathrm{tr}} = \mathrm{Tr}\left(f\left(\frac{2P-1}{h}\right)\right) = \mathcal{O}\left(h^{1-d}\right).
\end{equation}

\section{Control the Perturbation}\label{sec4}
In this section, we will prove the control of the trace norm of the perturbation $\widetilde{\mathcal{P}}_{T}-\mathcal{P}_{T}$ and the operator norm of $\left(z-\widetilde{\mathcal{P}}_{T}(z,h)\right)^{-1}$. The following theorem is proved in \cite{SJ1} for fixed $T$ and in \cite{AN2} for $T=\frac{(1-4 \epsilon)|\log h|}{\Lambda}$.

\begin{proposition}[Anantharaman \cite{AN2}]\label{tracethm}
There exist constants $C_{d}>0$ and $0<h_{0}\leq 1$ such that for $0<h\leq h_{0}$, we have:
\begin{equation}\label{eq21}
\begin{aligned}
\left\| \widetilde{\mathcal{P}}_{T}(z,h)- \mathcal{P}_{T}(z,h) \right\|_{\mathrm{tr}} \leq C_d h^{2-d}\left(\|f\|_{L^2(\mathbb{R})}^2 \int_{S^*M}\left(q^T-\chi_{h}\left(q^{T}\right)\right) d\mu_{L}(\rho)+\mathcal{O}(h^{1-2\delta})\right).
\end{aligned}
\end{equation}
\end{proposition}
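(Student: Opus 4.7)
The difference factors as
\[
\widetilde{\mathcal{P}}_T(z,h)-\mathcal{P}_T(z,h) \;=\; -ih\,F\,\mathcal{Q}_T\,F, \qquad F := f\!\left(\tfrac{2P-1}{h}\right),
\]
where $F$ and $\mathcal{Q}_T$ are self-adjoint and, since $f\geq 0$ with $P=P^*$, also $F\geq 0$. The plan is to use the non-negativity of $\widehat{q}^T\in S_\delta^{-\infty}$ (with $\delta=(1-\epsilon)/2<\tfrac12$) together with the sharp Gårding inequality to convert the trace-class norm into an ordinary trace, and then compute that trace semiclassically via the co-area formula in the shell $\{|p-\tfrac12|=O(h)\}$ selected by $F$.

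\textbf{Step 1 (sharp Gårding $\leadsto$ trace).} Applying Theorem \ref{Garding} in the exotic class $\Psi_\delta^{-\infty}$ to the self-adjoint $\mathcal{Q}_T$ with principal symbol $\widehat{q}^T\geq 0$ yields a constant $C_1>0$ with $\mathcal{Q}_T+C_1h^{1-2\delta}\mathrm{Id}\geq 0$. Sandwiching by the non-negative self-adjoint operator $F$ preserves positivity, and $F^2\geq 0$ as well, so each of $F(\mathcal{Q}_T+C_1h^{1-2\delta})F$ and $C_1h^{1-2\delta}F^2$ is positive trace-class with trace-class norm equal to its trace. The triangle inequality and cyclicity of the trace then give
\[
\|F\mathcal{Q}_T F\|_{\mathrm{tr}} \;\leq\; \mathrm{Tr}(F^2\mathcal{Q}_T) \;+\; 2 C_1\,h^{1-2\delta}\,\mathrm{Tr}(F^2).
\]
By \eqref{eq58} applied to the Schwartz function $f^2$, one has $\mathrm{Tr}(F^2) = \mathcal{O}(h^{1-d})$, so after multiplying by the outer $h$ the second summand contributes exactly a $C_d h^{2-d}\mathcal{O}(h^{1-2\delta})$ remainder.

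\textbf{Step 2 (trace formula $+$ co-area).} Realize $F^2=g\!\left(\tfrac{2P-1}{h}\right)$ with $g=f^2$ via the Helffer--Sjöstrand functional calculus; its symbol equals $g\!\left(\tfrac{2p-1}{h}\right)$ to leading order and is concentrated in the $h$-window $\{|p-\tfrac12|=O(h)\}$. Since $\delta+\tfrac12<1$, composition with $\mathcal{Q}_T\in\Psi_\delta^{-\infty}$ is well-behaved, and the semiclassical trace formula gives
\[
\mathrm{Tr}(F^2\mathcal{Q}_T) \;=\; \frac{1}{(2\pi h)^d}\int_{T^*M}\!f\!\left(\tfrac{2p(x,\xi)-1}{h}\right)^{\!2}\widehat{q}^T(x,\xi)\,dx\,d\xi \;+\; \mathcal{O}\!\left(h^{2-d-2\delta}\right).
\]
Changing variables $E=(2p-1)/h$ (so $dp=\tfrac{h}{2}dE$) and applying the co-area formula for $p=\tfrac12|\xi|_x^2$,
\[
\int_{T^*M}\!f\!\left(\tfrac{2p-1}{h}\right)^{\!2}\widehat{q}^T\,dx\,d\xi \;=\; \tfrac{h}{2}\,\|f\|_{L^2(\mathbb{R})}^{2}\int_{S^*M}\widehat{q}^T\,\frac{dS}{|\nabla p|} \;+\; \mathcal{O}(h^2),
\]
and $dS/|\nabla p|$ on $S^*M$ is proportional to $d\mu_L$ with a purely geometric constant absorbed into $C_d$. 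Since $\chi_0\equiv 1$ on $S^*M$, $\widehat{q}^T=q^T-\chi_h(q^T)$ there; collecting the factors $h\cdot(2\pi h)^{-d}\cdot\tfrac{h}{2}$ produces precisely the main term $C_d\,h^{2-d}\,\|f\|_{L^2}^{2}\int_{S^*M}(q^T-\chi_h(q^T))\,d\mu_L$ in \eqref{eq21}.

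\textbf{Main obstacle.} The delicate technical point is the exotic symbol calculus at the borderline scale for $F^2$: derivatives of $f\!\left(\tfrac{2p-1}{h}\right)$ grow like $h^{-|\beta|}$ (formally in $S_1^{-\infty}$), worse than the nominal $\delta=\tfrac12$ bound, so one must exploit the Schwartz decay in the rescaled variable and the tight localization to the $h$-band $\{|p-\tfrac12|=O(h)\}$ to legitimize the semiclassical trace formula for the product $F^2\mathcal{Q}_T$ with remainder of the precise order $\mathcal{O}(h^{2-d-2\delta})$ -- a factor of $h^{1-2\delta}$ smaller than the leading $O(h^{1-d})$ trace. Establishing this uniform-in-$h$ remainder estimate (following the Helffer--Sjöstrand-type analysis of \cite{AN2, SJ1, DZ19}) is the heart of the argument, while sharp Gårding, the trace cyclicity, and the co-area computation that follow are routine.
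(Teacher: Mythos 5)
Your proposal is correct and follows essentially the same route as the paper: sharp Gårding applied to $\mathcal{Q}_T$ (so that $\mathcal{Q}_T + C' h^{1-2\delta}\mathrm{Id}\geq 0$), together with positivity of $F=f\big(\tfrac{2P-1}{h}\big)$ and cyclicity of the trace, reduces the trace-class norm to $\mathrm{Tr}(F^2\mathcal{Q}_T)$ plus an $\mathcal{O}(h^{2-d-2\delta})$ remainder via $\mathrm{Tr}(F^2)=\mathcal{O}(h^{1-d})$, and that trace is then evaluated semiclassically to produce the $\|f\|_{L^2}^2\int_{S^*M}\widehat{q}^{\,T}\,d\mu_L$ main term with the correct $h^{2-d}$ prefactor. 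The only (cosmetic) divergence is that you realize $F^2$ via the Helffer--Sjöstrand functional calculus and a co-area change of variables in the $h$-shell, whereas the paper writes $F^2=\tfrac{1}{2\pi}\int\widehat{f^2}(t)\,e^{it(2P-1)/h}\,dt$ and evaluates the trace by stationary phase in the time variable, citing \cite[(5.1)]{AN2} for the remainder; you rightly single out the borderline exotic-symbol control for $F^2\mathcal{Q}_T$ as the delicate step, which the paper likewise delegates to \cite{AN2}.
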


\begin{proof}
We note that
$$\widetilde{\mathcal{P}}_{T}(z,h)- \mathcal{P}_{T}(z,h)=f\left(\frac{2 P - 1}{h}\right)\mathcal{Q}_{T} f\left(\frac{2 P - 1}{h}\right).$$
By sharp Gårding's inequality \ref{Garding}, there exists $C^{\prime}>0$ such that $$ \mathcal{Q}_{T}+C^{\prime}h^{1-2\delta}\cdot \mathrm{Id}\geq 0.$$ Hence, 
\begin{equation}\label{eq42}
\begin{aligned}
& \left\|f\left(\frac{2P-1}{h}\right)\mathcal{Q}_{T} f\left(\frac{2P-1}{h}\right)\right\|_{\mathrm{tr}} \\ 
& \leq \left\|f\left(\frac{2P-1}{h}\right)(\mathcal{Q}_{T}+C^{\prime} h^{1-2\delta}) f\left(\frac{2P-1}{h}\right)\right\|_{\mathrm{tr}} + C^{\prime} h^{1-2\delta}\left\|f\left(\frac{2P-1}{h}\right)^2\right\|_{\mathrm{tr}} \\ 
& \leq \operatorname{Tr}\left(f\left(\frac{2P-1}{h}\right)(\mathcal{Q}_{T}+C^{\prime} h^{1-2\delta}) f\left(\frac{2P-1}{h}\right)\right)+\mathcal{O}\left(h^{2-d-2\delta}\right) \\ 
& \leq \operatorname{Tr}\left(f\left(\frac{2P-1}{h}\right)\mathcal{Q}_{T} f\left(\frac{2P-1}{h}\right)\right)+\mathcal{O}\left(h^{2-d-2\delta}\right).
\end{aligned}
\end{equation}
Here,
\begin{equation}\label{eq43}
\begin{aligned}
\operatorname{Tr}\left(f\left(\frac{2P-1}{h}\right)\mathcal{Q}_{T} f\left(\frac{2P-1}{h}\right)\right)=\operatorname{Tr}\left(f\left(\frac{2P-1}{h}\right)^2\mathcal{Q}_{T}\right)=\operatorname{Tr} \frac{1}{2 \pi} \int \widehat{f^2}(t) e^{i t \frac{2P-1}{h}}\mathcal{Q}_{T}d t.
\end{aligned}
\end{equation}
Similar to \eqref{eq58}, applying the stationary phase method in the time variables, we obtain the following expansion; see \cite[(5.1)]{AN2}:
\begin{equation}\label{eq47}
\begin{aligned}
\operatorname{Tr} \frac{1}{2 \pi} \int \widehat{f^2}(t) e^{i t \frac{2 P-1}{h}}\mathcal{Q}_{T} d t = C_d h^{1-d}\left[\widehat{f^2}(0) \int_{p^{-1}(1 / 2)}\widehat{q}^{T}(\rho) d\mu_{L}(\rho)+\mathcal{O}\left(h^{(1-2 \delta)}\right)\right],
\end{aligned}
\end{equation}
where $C_d>0$ only depends on the dimension $d$ of $M$. Thus, \eqref{eq43} is equal to
\begin{equation}\label{eq45}
C_d h^{1-d}\left(\int_{S^*M}(q^{T}-\chi_{h}\left(q^{T}\right))(\rho) d\mu_{L}(\rho) \widehat{f^2}(0)+\mathcal{O}(h^{1-2\delta})\right), \quad h \rightarrow 0.
\end{equation}
Furthermore, $\widehat{f^2}(0)=\|f\|_{L^2(\mathbb{R})}^2$, so combining this with \eqref{eq43}, we get
\begin{equation}\label{eq46}
\begin{aligned}
\left\| f\left(\frac{2P-1}{h}\right)\mathcal{Q}_{T} f\left(\frac{2P-1}{h}\right) \right\|_{\mathrm{tr}}
\leq C_{d} h^{1-d} \left(\int_{S^*M}(q^{T}-\chi_{h}\left(q^{T}\right)) d\mu_{L}(\rho)\|f\|_{L^2(\mathbb{R})}^2+\mathcal{O}\left(h^{1-2\delta}\right)\right).
\end{aligned}
\end{equation}
Equations \eqref{eq42} and \eqref{eq46} provide estimates for the operator and trace norms of
\begin{equation}\label{eq54}
\|\widetilde{\mathcal{P}}_{T}-\mathcal{P}_{T}\|_{\mathrm{tr}}\leq C_d h^{2-d}\left(\|f\|_{L^2(\mathbb{R})}^2 \int_{S^*M}\left(q^T-\chi_{h}\left(q^{T}\right)\right) d\mu_{L}(\rho)+\mathcal{O}(h^{1-2\delta})\right).
\end{equation}
\end{proof}

For proof of Theorem \ref{mainthm1}, let $$\|q\|_{\infty}:=\sup\limits_{\rho \in p^{-1}\left(\left[\frac{1}{2}-\epsilon_{0},\frac{1}{2}+\epsilon_{0}\right]\right)}|q(\rho)|.$$ We will choose a sufficiently large \( L \geq 1 \) in the following \eqref{eq64}, then we define:
\begin{equation}\label{eq28}
  \begin{gathered}
\widetilde{\Omega}_{h}=\left\{\frac{1}{2}-2Ch \leq \Re z \leq \frac{1}{2}+2Ch\right\} \cap\left\{\overline{q}+\frac{3+\kappa}{4}w(h) \leq \frac{\Im z}{h} \leq 4L\|q\|_{\infty} \right\} \subset \mathbb{C},\\
\Omega_{h}=\left\{\frac{1}{2}-Ch \leq \Re z \leq \frac{1}{2}+Ch\right\} \cap\left\{\overline{q}+w(h) \leq \frac{\Im z}{h} \leq 3L\|q\|_{\infty} \right\} \subset \widetilde{\Omega}_{h} .
  \end{gathered}
\end{equation}
For any $z\in \widetilde{\Omega}_{h}$, then $f((2\Re z-1)/h)\geq 1$, and for any $\rho \in p^{-1}\left(\left[\frac{1}{2}-\epsilon_{0},\frac{1}{2}+\epsilon_{0}\right]\right)$, we have
\begin{equation}\label{eq27}
\frac{\Im z}{h}-\widehat{q}^{T}(\rho)=\frac{\Im z}{h}-q^T(\rho)+f\left(\frac{2\Re z -1}{h}\right)^2\left(q^T(\rho)-\chi_{h}\left(q^{T}\right)(\rho)\right) \geq \frac{1-\kappa}{2}w(h).
\end{equation}
Now we prove the following theorem similarly to \cite{SJ1} and \cite{AN2}:
\begin{proposition}\label{invertthm}
There exist $0<h_{0}\leq 1$ and $K=K(M,q,f,C,\kappa)$ independent with $z,h$ such that for any $h<h_{0}$ and $z \in \widetilde{\Omega}_{h}$, we have $\left(z-\widetilde{\mathcal{P}}_{T}(z,h)\right)^{-1}$ exists, and we have
$$
\left\|\left(z-\widetilde{\mathcal{P}}_{T}(z,h)\right)^{-1}\right\|\leq \frac{K}{w(h)h}.
$$
\end{proposition}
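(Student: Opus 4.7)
The plan is to establish the two-sided coercivity bound
$$
\left\| (z - \widetilde{\mathcal{P}}_T(z))u \right\|_{L^2} \geq \frac{w(h)h}{K}\,\|u\|_{L^2}, \qquad u \in L^2(M),
$$
for every $z \in \widetilde{\Omega}_h$ and $h \leq h_0$, along with the same bound applied to the adjoint $(z - \widetilde{\mathcal{P}}_T(z))^*$. Once both are established, $z - \widetilde{\mathcal{P}}_T(z)$ is injective with closed range equal to $\ker((z-\widetilde{\mathcal{P}}_T(z))^*)^\perp = L^2(M)$, hence bijective with inverse of norm $\leq K/(w(h)h)$.

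First I would take the imaginary part of the inner product with $u$:
$$
\Im\langle (z - \widetilde{\mathcal{P}}_T(z))u, u\rangle = \Im z\,\|u\|^2 - h\,\langle \widehat{Q}_T u, u\rangle - h\,\Im\langle \widetilde{R}_T(z)u, u\rangle,
$$
using self-adjointness of $P$ and of $\widehat{Q}_T$ by construction. By Cauchy--Schwarz, it suffices to prove
$$
\Im z\,\|u\|^2 - h\,\langle \widehat{Q}_T u, u\rangle \geq \tfrac{1-\kappa}{2}\,w(h)h\,\|u\|^2 - Ch\,\|(P - \Re z)u\|\,\|u\| - Ch^{1+\epsilon'}\|u\|^2
$$
for some $\epsilon' > 0$ depending only on $\epsilon$. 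The $\widetilde{R}_T$-term is absorbed using \eqref{eq31}, while the $(P-\Re z)u$-factor is absorbed into the left-hand side of the coercivity bound via the trivial estimate $\|(P - \Re z)u\| \leq \|(z-\widetilde{\mathcal{P}}_T(z))u\| + O(h)\|u\|$. The hypothesis $w(h)^{-1} = o(|\log h|^{1/2})$ guarantees $h^{1+\epsilon'} = o(h w(h))$, so the remaining error is negligible compared to the main term.

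The crux is the symbol-level inequality. Using the pointwise decomposition $q^T = \chi_h(q^T) + \widehat{q}^T + r$, where $r = (q^T - \chi_h(q^T))(1-\chi_0)$ vanishes on $p^{-1}([\frac{1}{2}-\epsilon_0, \frac{1}{2}+\epsilon_0])$ and $\mathcal{Q}_T = \mathrm{Op}_h(\widehat{q}^T) + O(h^{1-2\delta})$ by Proposition \ref{prop1}, I would write
$$
\langle \widehat{Q}_T u, u\rangle = \langle \mathrm{Op}_h(\chi_h(q^T))u, u\rangle + \bigl\langle (\mathcal{Q}_T - f\mathcal{Q}_T f)u, u\bigr\rangle + \langle \mathrm{Op}_h(r)u, u\rangle + O(h^{1-2\delta})\|u\|^2,
$$
and bound each piece separately. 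For the first piece, local sharp Gårding (Theorem \ref{localCVineq}) applied to the non-negative symbol $\overline{q} + \tfrac{1+3\kappa}{4}w(h) - \chi_h(q^T)$ yields the main positive contribution once combined with the hypothesis $\Im z/h \geq \overline{q} + \tfrac{3+\kappa}{4}w(h)$ on $\widetilde{\Omega}_h$. The third piece, whose symbol is supported outside the energy shell where $|p - \Re z| \gtrsim \epsilon_0$, only contributes $O(\|(P-\Re z)u\|\|u\|)$ by the first estimate in Theorem \ref{localCVineq}.

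The main obstacle is the middle term $\langle (\mathcal{Q}_T - f\mathcal{Q}_T f)u, u\rangle$. The operator $f((2P-1)/h)$ lies in no class $\Psi^0_\delta$ with $\delta < 1/2$ (its naive symbol $f((2p-1)/h)$ has derivatives of size $h^{-1}$), so symbolic calculus does not apply directly. Instead I would exploit the Fourier representation
$$
f((2P-1)/h) = \frac{1}{2\pi}\int \widehat{f}(t)\,e^{it(2P-1)/h}\,dt,
$$
and Egorov's theorem for the finite-time evolution generated by $2P - 1$ (valid since $\widehat{f} \in C_0^\infty$) to write $f\mathcal{Q}_T f$ as the quantization of a symbol well-defined modulo remainders of size $O(h^{1-2\delta})$. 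Coupled with the positivity $\widehat{q}^T \geq 0$ and sharp Gårding applied to $\mathcal{Q}_T$, this yields
$$
\bigl\langle (\mathcal{Q}_T - f\mathcal{Q}_T f)u, u\bigr\rangle \geq -O(h^{1-2\delta})\|u\|^2 - O(1)\|(P - \Re z)u\|\|u\|,
$$
which is the missing piece. The hard part is tracking the Egorov remainders uniformly in $T = (1-4\epsilon)|\log h|/\Lambda$ and verifying that all error terms remain $o(hw(h))$ under the stated bounds on $w(h)$.
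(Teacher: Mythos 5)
Your overall strategy (take $\Im\langle(z-\widetilde{\mathcal{P}}_T)u,u\rangle$, apply local sharp G\aa rding, absorb errors into the coercivity constant) matches the paper's, but there is a genuine gap in how you absorb the $\|(P-\Re z)u\|\|u\|$ terms, and your proposed ``trivial estimate'' $\|(P-\Re z)u\|\leq\|(z-\widetilde{\mathcal{P}}_T)u\|+\mathcal{O}(h)\|u\|$ is not strong enough.

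Concretely, after dividing by $h$, the G\aa rding step and the elimination of $f$ each leave an error of size $\mathcal{O}(1)\|u\|\,\|h^{-1}(P-\Re z)u\|$ (not $\mathcal{O}(h)\|(P-\Re z)u\|\|u\|$ as you wrote: the $f$-elimination already produces a factor $\|h^{-1}(P-\Re z)u\|$, cf.\ \eqref{eq57}--\eqref{eq62} and \eqref{eq40}). If you plug in an estimate of the form $\|(P-\Re z)u\|\leq 2\|(\widetilde{\mathcal{P}}_T-z)u\|+E\|u\|$, the resulting extra term is $\mathcal{O}(E/h)\|u\|^2$, which must be $o(w(h))\|u\|^2$, i.e.\ one needs $E=o(hw(h))$. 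With $E=\mathcal{O}(h)$ and $w(h)\to 0$ this fails: the error swallows the main term $\tfrac{(1-\kappa)w(h)}{4}\|u\|^2$. The paper instead proves \eqref{eq36}, namely $E=\mathcal{O}\bigl(h|\log h|^{-1/2}\bigr)=o(hw(h))$, and this gain comes from the near-invariance of $q^T$: on the energy shell, $\{p,q^T\}=\bigl(q\circ\varphi^{T/2}-q\circ\varphi^{-T/2}\bigr)/T=\mathcal{O}(1/T)=\mathcal{O}(1/|\log h|)$, so the commutator $\bigl[P,\widehat Q_T\bigr]$ is $\mathcal{O}(h/|\log h|)$ in a suitable sense; feeding this into the identity $\|(A+iB)u\|^2=\|Au\|^2+\|Bu\|^2+i\langle[A,B]u,u\rangle$ yields \eqref{eq36}. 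You do not invoke this cancellation anywhere, and without it the argument does not close.

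A second, more minor issue is your treatment of $f\mathcal{Q}_T f$. Since $f((2P-1)/h)$ is not a pseudodifferential operator in any class $\Psi^0_\delta$ with $\delta<1/2$, and since $f(A)Bf(A)$ (unlike $f(A)Bf(A)^{-1}$) is not a conjugation, the Egorov-based reduction you sketch is far from automatic -- you would be left with residual propagators $e^{i(s+t)(2P-1)/h}$ that are not pseudodifferential. The paper sidesteps this entirely by writing $f(\lambda)=f(\mu)+g_\mu(\lambda)(\lambda-\mu)$ with $\mu=(2\Re z-1)/h$ a scalar, so $f((2P-1)/h)=f((2\Re z-1)/h)\,\mathrm{Id}+g((2P-1)/h)\tfrac{2P-2\Re z}{h}$; this replaces $f\mathcal{Q}_T f$ by the scalar multiple $f((2\Re z-1)/h)^2\mathcal{Q}_T$ (to which G\aa rding applies directly since $\mathcal{Q}_T\in\Psi^{-\infty}_\delta$) plus a remainder already of the form $\mathcal{O}(1)\|u\|\,\|h^{-1}(P-\Re z)u\|$, with no Egorov machinery. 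You should adopt this factorization and, most importantly, supply the commutator estimate based on $\{p,q^T\}=\mathcal{O}(1/T)$.
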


\begin{proof}
  Recall that for any self-adjoint operator $A,B$ on $L^2(M)$ and any $u ,v, w \in L^2(M)$, the following identity and inequality hold:
  $$
  \|(A+i B) u\|^2=\|A u\|^2+\|B u\|^2+i\langle [A, B] u, u\rangle, \quad 2||v-w||^2\geq ||v||^2-2||w||^2.
  $$
  By \eqref{eq31}, for any $u \in H^2(M)$, we have:
  \begin{equation}\label{eq34}
    2\left\|\left(\widetilde{\mathcal{P}}_T-z\right) u\right\|^2 \geq\left\|\left(P+i h \widehat{Q}_T-z\right) u\right\|^2-\mathcal{O}\left(h^{4-4\delta}\right)\left(\left\|\left(2P-1\right) u\right\|^2+\|u\|^2\right),
  \end{equation}
  and 
  \begin{equation}\label{eq35}
    \begin{aligned}
    \left\|\left(P+i h \widehat{Q}_T-z\right) u\right\|^2=\|(P-\Re z) u\|^2+h^2\left\|\left(\frac{\Im z}{h}-\widehat{Q}_T\right) u\right\|^2+i h\left\langle \left[P, \widehat{Q}_T\right] u, u\right\rangle.
    \end{aligned}
  \end{equation}
  The final term satisfies
  \begin{equation}\label{eq55}
    \begin{aligned}
    \left[P, \widehat{Q}_T\right]&=\left[P, Q_T\right] + \left[P, f\left(\frac{2P-1}{h}\right)\mathcal{Q}_{T}f\left(\frac{2P-1}{h}\right)\right] \\
    &=\left[P, Q_T\right]+ f\left(\frac{2P-1}{h}\right)\left[P, \mathcal{Q}_{T}\right]f\left(\frac{2P-1}{h}\right).
    \end{aligned}
  \end{equation}
By Proposition \ref{prop1}, for any $a \in S_{\delta}^{1}(T^*M)$ with $0\leq \delta <\frac{1}{2}$, since $p=\sigma_{h}(P) \in S^{2}(T^*M)$, we have:
$$[P,\mathrm{Op}_{h}(a)]-\frac{h}{i}\mathrm{Op}_{h}(\{p,a\}) \in h^{2-2\delta}\Psi^{1}_{\delta}(M).$$
Combine with local form of Gårding inequality \ref{localCVineq}, for any $u\in H^2(M)$, we have
\begin{equation}\label{eq81}
\|[P,\mathrm{Op}_{h}(a)]u\|_{L^2}\leq h\left(\sup_{p^{-1}\left(\frac{1-\epsilon_{0}}{2},\frac{1+\epsilon_{0}}{2}\right)}\{p,a\}+\mathcal{O}\left(h^{1-2\delta}\right)\right)\|u\|_{L^2}+\mathcal{O}\left(h\right)\|(2P-1)u\|_{L^2}.
\end{equation}
 Recall that $q^{T}=\langle q \rangle_{T}$ on $p^{-1}\left(\left[\frac{1}{2}-\epsilon_{0},\frac{1}{2}+\epsilon_{0}\right]\right)$, and $X$ is the Hamiltonian vector field generated by geodesic flow $\varphi^{t}$,
\begin{equation*}
  \begin{aligned}
&\sup_{p^{-1}\left(\frac{1-\epsilon_{0}}{2},\frac{1+\epsilon_{0}}{2}\right)}\{p,q^{T}\}=\sup_{\rho \in p^{-1}\left(\frac{1-\epsilon_{0}}{2},\frac{1+\epsilon_{0}}{2}\right)}X\left(\langle q \rangle_{T}(\rho)\right)\\
=&\sup_{\rho \in p^{-1}\left(\frac{1-\epsilon_{0}}{2},\frac{1+\epsilon_{0}}{2}\right)}\frac{q(\varphi^{T/2}(\rho))-q(\varphi^{-T/2}(\rho))}{T}=\mathcal{O}\left(\frac{1}{T}\right).
  \end{aligned}
\end{equation*}
Similarly,
\begin{equation*}
  \begin{aligned}
&\sup_{p^{-1}\left(\frac{1-\epsilon_{0}}{2},\frac{1+\epsilon_{0}}{2}\right)}\{p,\chi_{h}\left(q^{T}\right)\}=\sup_{\rho \in p^{-1}\left(\frac{1-\epsilon_{0}}{2},\frac{1+\epsilon_{0}}{2}\right)}X\left(\chi_{h}\circ\langle q \rangle_{T}(\rho)\right)\\
=&\sup_{\rho \in p^{-1}\left(\frac{1-\epsilon_{0}}{2},\frac{1+\epsilon_{0}}{2}\right)}\chi_{h}^{\prime}\left(\langle q \rangle_{T}(\rho)\right)X\left(\langle q \rangle_{T}(\rho)\right)=\mathcal{O}\left(\frac{1}{T}\right).
  \end{aligned}
\end{equation*}
Recall that $T=(1-4\epsilon)|\log h|/\Lambda$, $\left|\Re z-\frac{1}{2}\right|=\mathcal{O}(h)$.We apply \(a = q^{T}\) and \(a = \widehat{q}^{T}\) in \eqref{eq81}; thus, we have
\begin{equation}\label{eq56}
  \begin{aligned}
    &ih\left\langle \left[P, \widehat{Q}_T\right] u, u\right\rangle=ih\left\langle \left[P, Q_{T} \right] u, u\right\rangle+ih \left\langle \left[P, \mathcal{Q}_{T}\right] f\left(\frac{2P-1}{h}\right)u, f\left(\frac{2P-1}{h}\right)u \right\rangle\\
    &=\mathcal{O}\left(h^2\right)\left(\left(\frac{1}{|\log h|}+h^{1-2\delta}\right)\left(\|u\|_{L^2}+\left\|f\left(\frac{2P-1}{h}\right)u\right\|_{L^2}\right)\right)\\
    &+\mathcal{O}\left(h^2\right) \left(\|(2P-1)u\|_{L^2}+\left\|(2P-1)f\left(\frac{2P-1}{h}\right)u\right\|_{L^2}\right)\\
    &=\mathcal{O}\left(\frac{h^2}{|\log h|}\right)\|u\|^2+\mathcal{O}(h^2)\|\left(P-\Re z\right)u\|^2.
  \end{aligned}
\end{equation}
Here we use that $\|(2P-1)f\left(\frac{2P-1}{h}\right)\|=\mathcal{O}(h)$. Combine \eqref{eq34}, \eqref{eq35}, and \eqref{eq56}, we obtain that:
\begin{equation}\label{eq36}
  \|(P-\Re z) u\|\leq 2\left\|\left(\widetilde{\mathcal{P}}_T-z\right) u\right\|+\mathcal{O}\left(\frac{h}{|\log h|^{\frac{1}{2}}}\right)\|u\|.
\end{equation}
On the other hand, we have
\begin{equation}\label{eq38}
  \Im \left\langle \frac{1}{h}\left(z-\widetilde{\mathcal{P}}_T\right) u, u\right\rangle=\left\langle\left(\frac{\Im z}{h}-\widehat{Q}_T\right) u, u\right\rangle+\mathcal{O}\left(h^{1-2 \delta}\right)(\|u\|+\|(\Re z-P) u\|)\|u\|,
\end{equation}
and by the definition of $\widehat{Q}_{T}$,
\begin{equation}\label{eq39}
\begin{aligned}
\left\langle\left(\frac{\Im z}{h}-\widehat{Q}_T\right) u, u\right\rangle=\left\langle\left(\frac{\Im z}{h}-Q_T+f\left(\frac{2 P-1}{h}\right)\mathcal{Q}_{T} f\left(\frac{2 P-1}{h}\right)\right) u, u\right\rangle.
\end{aligned}
\end{equation}

Here we would like to eliminate $f\left(\frac{2P-1}{h}\right)$ and for that purpose we factorize
$$
f(\lambda)=f(\mu)+g_\mu(\lambda)(\lambda-\mu),
$$
so that $g_\mu(\lambda)=\int_0^1 f^{\prime}(\mu+t(\lambda-\mu)) d t$ and $\left|g_\mu(\lambda)\right| \leq \left\|f^{\prime}\right\|_{L^{\infty}(\mathbb{R})}$. Let $g(\lambda):=g_{\frac{2\Re z-1}{h}}(\lambda)$, we have
$$f\left(\frac{2P-1}{h}\right) - f\left(\frac{2\Re z-1}{h}\right)=g\left(\frac{2P-1}{h}\right)\frac{2P-2\Re z}{h}.$$
Thus, we get
\begin{equation}\label{eq57}
  \begin{aligned}
    & \left|\left\langle f\left(\frac{2P-1}{h}\right)\mathcal{Q}_{T} f\left(\frac{2P-1}{h}\right) u,u\right\rangle - f\left(\frac{2\Re z-1}{h}\right)^2 \langle \mathcal{Q}_{T} u, u \rangle \right| \\
    & \leq \left|\left\langle \mathcal{Q}_{T} \left(f\left(\frac{2P-1}{h}\right) - f\left(\frac{2\Re z-1}{h}\right)\right) u, f\left(\frac{2P-1}{h}\right) u\right\rangle\right| \\
    & \quad + \left|\left\langle f\left(\frac{2\Re z-1}{h}\right)\mathcal{Q}_{T}u, \left(f\left(\frac{2P-1}{h}\right) - f\left(\frac{2\Re z-1}{h}\right)\right)u \right\rangle \right| \\
    & \leq \left|\left\langle \mathcal{Q}_{T} g\left(\frac{2P-1}{h}\right) \frac{2P-2\Re z}{h} u, f\left(\frac{2P-1}{h}\right) u\right\rangle\right| \\
    & \quad + \left|\left\langle \mathcal{Q}_{T} f\left(\frac{2\Re z-1}{h}\right) u, g\left(\frac{2P-1}{h}\right) \frac{2P-2\Re z}{h} u\right\rangle\right| \\
    & \leq \left( 2 \left(\sup _{\rho \in T^*M} \widehat{q}^{T}(\rho)\right) \|f\|_{\infty} \left\| f^{\prime} \right\|_{\infty} + \mathcal{O}(h^{1-2\delta}) \right) \|u\| \left\| \frac{P - \Re z}{h} u \right\|\\
    & =\mathcal{O}(1)\|u\| \left\| \frac{P - \Re z}{h} u \right\|.
  \end{aligned}
  \end{equation}

\begin{equation}\label{eq62}
  \begin{aligned}
  \left\langle\left(\frac{\Im z}{h}-\widehat{Q}_T\right) u, u\right\rangle=\left\langle\left(\frac{\Im z}{h}-Q_T+f\left(\frac{2 \Re z-1}{h}\right)^2\mathcal{Q}_{T}\right) u, u\right\rangle+\mathcal{O}(1)\|u\|\left\|\frac{P-\Re z}{h} u\right\|.
  \end{aligned}
  \end{equation}

Recall \eqref{eq27}, it follows from the local form of Gårding inequality Theorem \ref{localCVineq} that for such $z \in \widetilde{\Omega}_{h}$,
\begin{equation}\label{eq40}
\begin{aligned}
&\left\langle\left(\frac{\Im z}{h}-Q_T+f\left(\frac{2 \Re z-1}{h}\right)^2\mathcal{Q}_{T}\right) u, u\right\rangle\\
\geq &\left(\frac{(1-\kappa)w(h)}{2}-\mathcal{O}\left(h^{1-2 \delta}\right)\right)\|u\|^2-\mathcal{O}(1)\|u\|\|(P-\Re z) u\| .
\end{aligned}
\end{equation}
Then for sufficiently small $h$, we get
\begin{equation}\label{eq41}
\begin{aligned}
\Im \left\langle\frac{1}{h}\left(z-\widetilde{\mathcal{P}}_T\right) u, u\right\rangle \geq \frac{(1-\kappa)w(h)}{4}\|u\|^2-\mathcal{O}(1)\|u\|\left\|\frac{P-\Re z}{h} u\right\|.
\end{aligned}
\end{equation}
Since $$\Im \left\langle\frac{1}{h}\left(z-\widetilde{\mathcal{P}}_T\right) u, u\right\rangle\leq \|u\|\left\|\frac{z-\widetilde{\mathcal{P}}_T}{h} u\right\|,$$
by \eqref{eq36} and \eqref{eq41}, we have
\begin{equation}\label{eq61}
  \begin{aligned}
  \frac{(1-\kappa)w(h)}{4}\|u\| \leq \mathcal{O}(h^{-1})\left(2\left\|\left(\widetilde{\mathcal{P}}_T-z\right) u\right\|+\mathcal{O}\left(\frac{h}{|\log h|^{\frac{1}{2}}}\right)\|u\|\right).
  \end{aligned}
  \end{equation}
Recall that $\lim\limits_{h\to 0}w(h)|\log h|^{\frac{1}{2}}=\infty$. We find finally for sufficiently small $h$, $z \in \widetilde{\Omega}_{h}$, we have $w(h)\|u\|\leq \mathcal{O}(h^{-1})\left\|\left(\widetilde{\mathcal{P}}_T-z\right) u\right\|.$ It implies that the operator $z-\widetilde{\mathcal{P}}_T$ is invertible, and
$$
\left\|\left(\widetilde{\mathcal{P}}_T-z\right)^{-1}\right\|=\mathcal{O}\left(\frac{1}{w(h)h}\right).
$$
We finish the proof of Theorem \ref{invertthm}. 
\end{proof}
\section{Proof of main Theorems \ref{mainthm1}, \ref{concentrationforDWE}, and \ref{zerosoftsz}}\label{sec5}
For $z \in \widetilde{\Omega}_{h}$, we can write
$$
\mathcal{P}_T-z=\left(\widetilde{\mathcal{P}}_T-z\right)(1+K(z))
$$
where $K(z)$ is the trace class operator $\left(\widetilde{\mathcal{P}}_T-z\right)^{-1}\left(\mathcal{P}_T-\widetilde{\mathcal{P}}_T\right)$. We can bound the number of $\Sigma \bigcap \Omega_{h}$ by the number of zeros of the determiant $g(z)=\operatorname{det}(1+K(z))$, which is holomorphic in $z \in \widetilde{\Omega}_{h}$. Now we estimate the trace norm of $K(z)$, we recall that $q^{T}=\langle q \rangle_{T}$ on $p^{-1}(1/2)=S^*M$. There exists a constant $C_{1}>0$ such that:
\begin{equation}\label{eq48}
  \left\|\left( \widetilde{\mathcal{P}}_T-z \right)^{-1}\left(\mathcal{P}_T-\widetilde{\mathcal{P}}_T\right)\right\|_{\mathrm{tr}}\leq \frac{C_{1}}{w(h)h} \cdot h^{2-d} \left( \int_{S^*M} \left(q^T-\chi_{h}\left(q^{T}\right)\right) d\mu_{L}(\rho) + \mathcal{O}(h^{1-2\delta}) \right).
\end{equation}
Recall Corollary \ref{coroMDPA} of Theorem \ref{MDPA} and $T=(1-4\epsilon)|\log h|/\Lambda$, for any $$0<c<\widetilde{c}(q,M,\epsilon_{0},\epsilon,\kappa)=\frac{(1-4\epsilon)\kappa^2}{2\Lambda \sigma_{q}^2}$$ such that:
\begin{equation}\label{eq49}
  \begin{aligned}
    &\int_{S^*M} \left(q^T-\chi_{h}\left(q^{T}\right)\right) d\mu_{L}(\rho)=\int_{\langle q \rangle \geq \overline{q} + \kappa w(h)} \left(\langle q \rangle_{T}(\rho)-\chi_{h}(\langle q \rangle_{T}(\rho))\right) d\mu_{L}(\rho)\\ 
    \leq & 2\|q\|_{\infty} \mu_{L}\left\{\rho \in S^*M \ \Bigg| \ \langle q\rangle_{T}(\rho)-\overline{q}\geq \kappa w(h) \right\}\\
    =&\mathcal{O} \left(e^{-cw(h)^2|\log h|}\right).
  \end{aligned}
\end{equation}
Thus, we obtain 
\begin{equation}\label{eq50}
\|K(z)\|_{\mathrm{tr}}=\left\|\left( \widetilde{\mathcal{P}}_T-z \right)^{-1}\left(\mathcal{P}_T-\widetilde{\mathcal{P}}_T\right)\right\|_{\mathrm{tr}}=\mathcal{O}\left(\frac{h^{1-d}}{e^{cw(h)^2|\log h|}w(h)}\right).
\end{equation}
Let us call $N\left(g, \Omega_{h}\right)$ this number of zeros. We will apply Jensen's formula to bound the number of zeros $N\left(g, \Omega_{h}\right)$. However, the domain $\widetilde{\Omega}_{h}$ and $\Omega_{h}$ are both depending on $h$, so we need to deal with distance between two domains.

First we introduce $z_0=\frac{1}{2}+2 i hL\|q\|_{\infty}$. Let $$F_{h}(z)=\frac{z-1/2}{h}-i\left(\overline{q}+\frac{3+\kappa}{4}w(h)\right),$$
and $$\Omega_{0}^{\prime}:=\left\{\left|\Re z^{\prime}\right|\leq 2C, \quad 0 \leq \Im z^{\prime} \leq \frac{7}{2}L\|q\|_{\infty}\right\}.$$
Without loss of generality, we assume that $0\leq w(h)\leq \|q\|_{\infty}$, then for sufficiently large $L\geq 5$,
$$\Omega^{1}_{h}=F_{h}(\Omega_{h})=\left\{\left|\Re z^{\prime}\right|\leq C, \quad \frac{(1-\kappa)w(h)}{4} \leq \Im z^{\prime} \leq 3L\|q\|_{\infty}-\left(\overline{q}+\frac{3+\kappa}{4}w(h)\right)\right\}$$
satisfying
$$\Omega^{1}_{h}\Subset \Omega_{0}^{\prime} \Subset F_{h}(\widetilde{\Omega}_{h}),$$
and $$z_{0}^{\prime}=F_{h}(z_{0})=\left(2\|q\|_{\infty}-\overline{q}-\frac{3+\kappa}{4}w(h)\right)i \in \Omega^{1}_{h}.$$

We see that $\Omega^{1}_{h}$ is close to the edge $[-2C,2C]$ of $\Omega_{0}^{\prime}$ with distance $(1-\kappa)w(h)/4$. However, $\Omega^{1}_{h}$ is far from the corner of $\Omega_{0}^{\prime}$ with distance larger than $C>0$. Then we choose a $C^{\infty}$ Jordan curve $\gamma$ such that $\gamma$ is smooth near the corner, coincides with $\partial \Omega_{0}^{\prime}$ except the a sufficiently neighbourhood of the corner, and $\gamma$ encloses a fix simply connected domain $\Omega_{0}$ which is independent with $h$ such that $\Omega^{1}_{h} \Subset \Omega_{0}$, see Figure \ref{figure1}.

\begin{figure}
  \center
\includegraphics[scale=0.6]{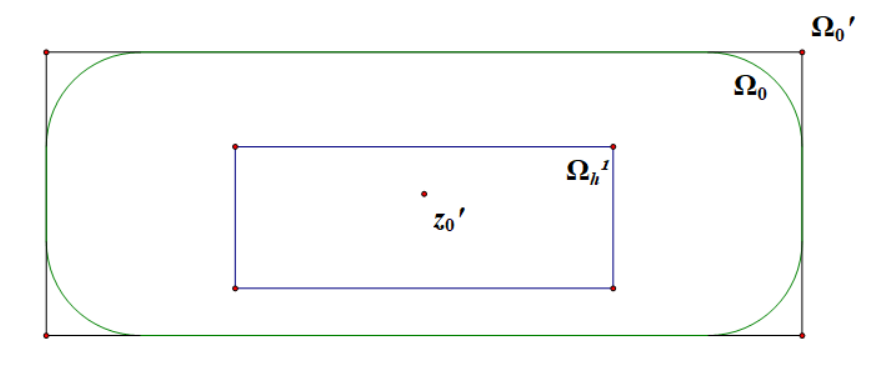}
\caption{The blue line is the boundary of domain $\Omega_{h}^{1}$, the green line is the smooth boundary of domain $\Omega_{0}$, the black line is the boundary of domain $\Omega_{0}^{\prime}$.}
\label{figure1}
\end{figure}

Since $\Omega_{0}$ is bounded simply connect domain with smooth boundary $\gamma=\partial \Omega_{0}$ and $\gamma$ is smooth diffeomorphic to $S^1=\partial \mathbb{D}(0,1)$, by Riemann mapping Theorem and Painlevé's Theorem, see \cite{BS90}, there exists a biholomorphic map $\Phi: \Omega_{0}\mapsto \mathbb{D}(0,1)$ which can be $C^\infty$ extended to $\overline{\Omega_{0}}$, and $d\Phi$ is nowhere vanishing on $\overline{\Omega_{0}}$. Let $\phi_{h}: \mathbb{D}(0,1) \mapsto \mathbb{D}(0,1)$ be the biholomorphic map such that $$ \phi_{h}(z)=\frac{z-\Phi(z^{\prime}_{0})}{1-\overline{\Phi(z^{\prime}_{0})}z}.$$ We notice that there exists $c_{0}>0$ independent with $h$ such that $d(\Phi(z^{\prime}_{0}), \partial \mathbb{D}(0,1))>c_{0}$. Then $\Phi_{h}=\phi_{h}\circ \Phi$ satisfies uniform bound of derivative, i.e. there exists $0<c_{1}<1$ such that $$c_{1}<|\Phi_{h}^{\prime}(z)|<c_{1}^{-1}, \quad z \in \overline{D(0,1)}.$$ Let $\Phi_{h}(z_1)$ belongs to the boundary of $\Phi_{h}(\Omega_{h}^{1})$, $\partial (\Phi_{h}(\Omega_{h}^{1}))$, $\Phi_{h}(z_2) \in \partial \mathbb{D}(0,1)$, then $$d(\Phi_{h}(z_{1}),\Phi_{h}(z_{2}))\geq c_{1}d(z_{1},z_{2})\geq \frac{(1-\kappa)c_{1}w(h)}{4}.$$ Then we have $$\Phi(\Omega^{1}_{h}) \subset \mathbb{D}\left(0,1-\frac{(1-\kappa)c_{1}w(h)}{4}\right).$$ We consider the $\widetilde{g}=g\circ (\Phi_{h} \circ F_{h})^{-1}$ on $\overline{\mathbb{D}(0,1)}$, then it is holomorhic on $\mathbb{D}(0,1)$, by Jensen's formula:
$$
\int_0^1 \frac{N(t)}{t} d t=\frac{1}{2 \pi} \int_0^{2 \pi} \log \left|\widetilde{g}\left(e^{i \theta}\right)\right| d \theta-\log |\widetilde{g}(0)|.
$$
where $N(t)$ denotes the number of zeros of $\widetilde{g}$ in the disc of radius $t$ centered at the origin. Let $t(h)=1-\frac{(1-\kappa)c_{1}w(h)}{4}$, then we have:
$$
N(t(h))\leq \frac{\int_{t(h)}^1 \frac{N(t)}{t} d t}{\int_{t(h)}^{1}\frac{dt}{t}}\leq \frac{1}{\int_{t(h)}^{1}\frac{dt}{t}}\left(\log ||\widetilde{g}||_{\infty,\overline{\mathbb{D}(0,1)}}-\log |\widetilde{g}(0)|\right).
$$
It implies 
$$
N(t(h))=\mathcal{O}\left(\frac{1}{w(h)}\right)\left( \log ||\widetilde{g}||_{\infty,\overline{\mathbb{D}(0,1)}}-\log |\widetilde{g}(0)|\right).
$$
Then we obtain that :
\begin{equation}\label{eq60}
N\left(g, \Omega_{h}\right) \leq N(t(h))=\mathcal{O}\left(\frac{1}{w(h)}\right)\left(\log \|g\|_{\infty, \widetilde{\Omega}_{h}}-\log \left|g\left(z_0\right)\right|\right).
\end{equation}
On the one hand, for all $z \in \widetilde{\Omega}_{h}$, by \eqref{eq50}, we have:
\begin{equation}\label{eq51}
\begin{aligned}
\log |g(z)|=\log |\operatorname{det}(1+K(z))| \leq \|K(z)\|_{\mathrm{tr}}=\mathcal{O}\left(\frac{h^{1-d}}{e^{cw(h)^2|\log h|}w(h)}\right).
\end{aligned}
\end{equation}

On the other hand, by the same argument in the proof of Theorem \ref{invertthm}, since $z_{0}$ has large imaginary part, $\mathcal{P}_T-z_{0}$ is invertible and $$\|\left(\mathcal{P}_T-z_{0}\right)^{-1}\|=\mathcal{O}\left(\frac{1}{hL\|q\|_{\infty}}\right).$$ 
Then we choose a sufficiently large $L\gg 5\|f\|_{L^{\infty}(\mathbb{R})}^2$ such that,
\begin{equation}\label{eq64}
\left\|h\left(\mathcal{P}_T-z_{0}\right)^{-1}\left(f\left(\frac{2P-1}{h}\right)\mathcal{Q}_{T}f\left(\frac{2P-1}{h}\right)\right)\right\|<\frac{1}{2}.
\end{equation}
By Neumann series, it implies that
\begin{equation}\label{eq63}
  \begin{aligned}
&\|\left(1+K\left(z_0\right)\right)^{-1}\|=\left\|\left(\mathcal{P}_T-z_{0}\right)^{-1}\left(\widetilde{\mathcal{P}}_T-z_{0}\right)\right\|\\
=&\left\|I+ih\left(\mathcal{P}_T-z_{0}\right)^{-1}\left(f\left(\frac{2P-1}{h}\right)\mathcal{Q}_{T}f\left(\frac{2P-1}{h}\right)\right)\right\|<2.
  \end{aligned}
\end{equation}
We use the same calculation as in \cite{SJ1, AN2} and get
\begin{equation}\label{eq52}
  \begin{aligned}
    &-\log |g(z_{0})|=\log \left|\operatorname{det}\left(1+K\left(z_0\right)\right)^{-1}\right|=\log \left|\operatorname{det}\left(1-K(z_0)(1+K(z_0))^{-1}\right)\right|\\
    \leq &\left\|K\left(z_0\right)\left(1+K\left(z_0\right)\right)^{-1}\right\|_{\mathrm{tr}}\leq \left\|K\left(z_0\right)\right\|_{\mathrm{tr}}\left\|\left(1+K\left(z_0\right)\right)^{-1}\right\|=\mathcal{O}\left(\frac{h^{1-d}}{e^{cw(h)^2|\log h|}w(h)}\right).
  \end{aligned}
  \end{equation}
Combining \eqref{eq60}, \eqref{eq51}, \eqref{eq52}, then we obtain that
$$
N\left(g, \Omega_{h}\right)=\mathcal{O}\left(\frac{h^{1-d}}{e^{cw(h)^2|\log h|}w(h)^2}\right).
$$
Since \( N(g, \Omega_{h}) \) equals the number of poles of \( (z - P(z, h))^{-1} \) in \( \Omega_{h} \), and we define  
\[
c(q, M) := \lim_{\kappa \to 1, \epsilon_{0} \to 0, \epsilon \to 0} \frac{(1 - 4\epsilon)\kappa^2}{2\Lambda \sigma_{q}^2} = \frac{1}{2\Lambda_{0}\sigma_{q}^2},
\]  
we conclude the proof of Theorem \ref{mainthm1}.

Finally, we prove Theorems \ref{concentrationforDWE} and \ref{zerosoftsz}. Since Theorem \ref{zerosoftsz} is equivalent to \eqref{eq59}, we only need to prove \eqref{eq59}. First, we prove Theorems \ref{concentrationforDWE}. Let $\lambda=h^{-1}\gg 1$, $\tau_{n}$ be the eigenvalue of the operator \eqref{DWEop} with $\Re \tau_{n} \sim \lambda=h^{-1}$, then $z_{n}=h^{2}\tau_{n}^2/2$ is the pole of $P(z,h)=-\frac{h^2\Delta}{2}-ih\sqrt{2z}a(x)$. For any $C_{1}>0$, if $\tau_{n}$ satisfies
$$\lambda- C_{1}\leq \Re \tau_{n} \leq \lambda+C_{1}, \quad |\Im \tau_{n} +\overline{a}|\geq |\log \lambda|^{-\frac{1-\alpha}{2}}.$$
For any $c>0$, then we take a sufficiently small $\varepsilon>0$ such that $\frac{c}{(1-\varepsilon)^2}<c(a,M)$, there exists $C>0$ depending on $a$ and $C_{1}$ such that for sufficiently large $\lambda=h^{-1}>0$, 
$$\frac{1}{2}- Ch\leq \Re z_{n} \leq \frac{1}{2}+Ch, \quad \left|\frac{\Im z_{n}}{h} +\overline{a}\right|\geq (1-\varepsilon)|\log h|^{-\frac{1-\alpha}{2}}.$$

We take $q_{z}(x,\xi)=-\sqrt{2z}a(x)$ and $w(h)=(1-\varepsilon)|\log h|^{-\frac{1-\alpha}{2}}=(1-\varepsilon)|\log \lambda|^{-\frac{1-\alpha}{2}}$, then $\frac{c}{(1-\varepsilon)^2}w(h)^2|\log h|=c|\log h|^{\alpha}$, and $q(x,\xi)$ equals to $-a(x)$, it implies $\overline{q}=-\overline{a}$. By Theorem \ref{mainthm1}, there exists $\lambda_{0}>0$ and $C_{2}>0$ such that for any $\lambda\geq \lambda_{0}$, we have 
$$
  \#\left\{\tau_{n} \ | \ \lambda- C_{1}\leq \Re \tau_{n} \leq \lambda+C_{1}, \ |\Im \tau_{n} +\overline{a}|\geq |\log \lambda|^{-\frac{1-\alpha}{2}}\right\}\leq \frac{C_{2}\lambda^{d-1}}{e^{c|\log \lambda|^{\alpha}} |\log  \lambda|^{\alpha-1}}.
$$
Let 
$$N_{k}:=\#\left\{\tau_{n} \ | \ \lambda_{0}+(k-1)C_{1}\leq \Re \tau_{n} \leq \lambda_{0}+(k+1)C_{1}, \ |\Im \tau_{n} +\overline{a}|\geq |\log (\lambda_{0}+kC_{2})|^{-\frac{1-\alpha}{2}}\right\}.$$
We notice that $f(\lambda)=\frac{\lambda^{d-1}}{e^{c|\log \lambda|^{\alpha}} |\log  \lambda|^{\alpha-1}}$ is a monotone increasing function for $\lambda \gg 1$, then 
$\sum_{k=0}^{N}f(k)\leq \mathcal{O}\left(Nf(N)\right)$ as $N \to \infty$, thus we have
\begin{equation}
  \begin{aligned}
    &\#\left\{n \ | \ 0 \leq \Re \tau_n \leq \lambda, \ \left|\Im \tau_n+\overline{a}\right|\geq (\log \lambda)^{-\frac{1-\alpha}{2}} \right\}=\sum\limits_{0\leq k\leq \frac{\lambda-\lambda_{0}}{2C_{1}}} N_{k}+\mathcal{O}(1)\\
    =& \mathcal{O}\left(\frac{\lambda^{d}}{e^{c|\log \lambda|^{\alpha}} |\log  \lambda|^{\alpha-1}}\right), \quad \lambda \to \infty.
  \end{aligned}
\end{equation}
We have proved Theorem \ref{concentrationforDWE}. For proof of \eqref{eq59}, let $P_{\omega}=-\frac{1}{2}h^2\Delta_{\omega}=P+ihQ$, where $$\quad P=\frac{1}{2}\left(-h^2\Delta+2h\langle ih\Im \omega,d\cdot \rangle -h^2|\omega|_{x}^2\right), \quad Q:=\langle -ih\Re \omega, d\cdot\rangle.$$ Then $$\sigma_{h}(P)=\frac{1}{2}|\xi|_{x}^2, \quad q(x,\xi):=\sigma_{h}(Q)(x,\xi)=\Re\langle \omega,\xi\rangle_{x}, \quad \overline{q}=\int_{S^*M}q(\rho)d\mu_{L}(\rho)=0.$$ Thus we apply Theorem \ref{mainthm1} and the same argument in the proof of Theorem \ref{concentrationforDWE} to obtain \eqref{eq59}. We have proved Theorem \ref{zerosoftsz}.

\section{Appendix: Proof of moderate deviation principles Theorem \ref{MDPA}}\label{appendix}

Here we give the proof of Theorem \ref{MDPA} in this appendix. The method of the proof of the MDP Theorem \ref{MDPA} can be found in Rey-Bellet and Young \cite[Theorem 4.3]{RY08}, Wu \cite[Theorem 1.2, Theorem 1.4]{WL95}, and Dolgopyat and Sarig \cite{DS23}. The moderate deviation is an application of Gärtner-Ellis theorem \cite{GJ77, ER84}, here we introduce a 1-dimension version, see \cite[Theorem A.1]{DS23}. 
\begin{definition}
  The Legendre-Fenchel transform of a convex function $\varphi: \mathbb{R} \rightarrow \mathbb{R}$ is the function $\varphi^*: \mathbb{R} \rightarrow \mathbb{R} \cup\{+\infty\}$ given by

$$
\varphi^*(\eta):=\sup _{\xi \in \mathbb{R}}\{\xi \eta-\varphi(\xi)\}.
$$

\end{definition}
\begin{theorem}[Gärtner-Ellis theorem]\label{GETLDP}
  Suppose $t>0$, $\lim\limits_{t\to \infty}b(t)=\infty$, and let $W_{t}$ be a family of random variables such that $\mathbb{E}\left(\mathrm{e}^{\xi W_t}\right)<\infty$ for all $\xi \in \mathbb{R}$. Assume that the limit

  $$
  \mathcal{F}(\xi):=\lim _{t \rightarrow \infty} \frac{1}{b(t)} \log \mathbb{E}\left(\mathrm{e}^{\xi W_{t}}\right)
  $$
  
  \noindent exists for all $\xi \in \mathbb{R}$, and is differentiable and strictly convex on $\mathbb{R}$. Let $\mathcal{I}(\eta)$ be the Legendre-Fenchel transform of $\mathcal{F}(\xi)$. Then:
  \begin{itemize}
  \item For every closed set $F \subset \mathbb{R}$, $\limsup\limits _{t \rightarrow \infty} \frac{1}{b(t)} \log \mathbb{P}\left[W_{t} / b(t) \in F\right] \leq-\inf\limits_{\eta \in F} \mathcal{I}(\eta)$.
  \item For every open set $G \subset \mathbb{R}$, $\liminf\limits _{t \rightarrow \infty} \frac{1}{b(t)} \log \mathbb{P}\left[W_{t} / b(t) \in G\right] \geq-\inf\limits_{\eta \in G} \mathcal{I}(\eta)$.
  \end{itemize}
  \end{theorem}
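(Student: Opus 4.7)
The plan is to prove the upper and lower bounds separately: the upper bound follows from a Chernoff-type exponential Markov inequality, while the lower bound rests on the classical exponential change of measure (tilting) argument. Both are made to work on closed/open sets using the properties of the Legendre-Fenchel transform, in particular that strict convexity and differentiability of $\mathcal{F}$ force $\mathcal{I}$ to be a good convex rate function with $\mathcal{I}(\eta)=\xi^*\eta-\mathcal{F}(\xi^*)$ where $\xi^*$ is uniquely characterized by $\mathcal{F}'(\xi^*)=\eta$.

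For the upper bound I would first prove the one-sided tail estimate: for any $a\in\mathbb{R}$ and $\xi\geq 0$, Chebyshev's inequality gives
\[
\mathbb{P}[W_t/b(t)\geq a]\leq e^{-\xi b(t)a}\,\mathbb{E}[e^{\xi W_t}],
\]
so after taking $\tfrac{1}{b(t)}\log$ and passing to the lim sup one obtains the bound $-(\xi a-\mathcal{F}(\xi))$. Optimizing over $\xi\geq 0$ (and symmetrically over $\xi\leq 0$ for left tails) yields $-\mathcal{I}(a)$ on each half-line. For a general closed set $F$, I would use that the level sets $\{\mathcal{I}\leq L\}$ are compact intervals (goodness of $\mathcal{I}$), reduce to the case where $F$ meets a compact sublevel set, and cover $F\cap\{\mathcal{I}\leq L\}$ by finitely many half-lines $[a_i,\infty)$ or $(-\infty,b_j]$ on which the one-sided bound applies, concluding the closed-set inequality after letting $L\to\infty$.

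For the lower bound I would fix $\eta\in G$ and pick $\delta>0$ with $(\eta-\delta,\eta+\delta)\subset G$. Let $\xi^*$ be the unique point with $\mathcal{F}'(\xi^*)=\eta$, and introduce the tilted family of laws
\[
\frac{d\mathbb{Q}_t}{d\mathbb{P}}=\frac{e^{\xi^* W_t}}{\mathbb{E}[e^{\xi^* W_t}]}.
\]
A direct computation shows that the scaled log-moment generating function of $W_t$ under $\mathbb{Q}_t$ is $\mathcal{F}(\xi+\xi^*)-\mathcal{F}(\xi^*)$, whose derivative at $\xi=0$ is $\eta$; applying the already-proved upper bound to $W_t$ under $\mathbb{Q}_t$ (with rate function centered at $\eta$) gives $\mathbb{Q}_t[|W_t/b(t)-\eta|\geq\delta]\to 0$. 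Then, reverting the tilt,
\[
\mathbb{P}[W_t/b(t)\in G]\geq e^{-\xi^*(\eta+\delta)b(t)}\,\mathbb{E}[e^{\xi^* W_t}]\cdot \mathbb{Q}_t\bigl[|W_t/b(t)-\eta|<\delta\bigr],
\]
(with the sign of $\delta$ chosen according to $\mathrm{sgn}(\xi^*)$), and taking $\tfrac{1}{b(t)}\log$ produces $-\mathcal{I}(\eta)-|\xi^*|\delta+o(1)$. Letting $\delta\downarrow 0$ and then taking the supremum over $\eta\in G$ gives the desired lower bound.

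The main obstacle is the concentration step under the tilted measure: one must verify that the upper bound half of the theorem, applied to $\mathbb{Q}_t$, really produces decay at rate $b(t)$ around $\eta$. The strict convexity of $\mathcal{F}$ is decisive here, because it guarantees that the tilted rate function $\widetilde{\mathcal{I}}(\cdot)=\mathcal{I}(\cdot)-\xi^*(\cdot-\eta)-(\mathcal{F}(\xi^*)-\xi^*\eta)$ has a unique zero at $\eta$ and is strictly positive on $\{|\cdot-\eta|\geq\delta\}$; differentiability of $\mathcal{F}$ is what lets us solve $\mathcal{F}'(\xi^*)=\eta$ and identify the correct tilt. Beyond this, the only additional care needed is that $\mathbb{E}[e^{\xi W_t}]<\infty$ for all $\xi$ (so tilting is always admissible), which is part of the hypotheses.
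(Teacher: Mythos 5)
The paper does not actually prove this statement: Theorem \ref{GETLDP} is presented as a cited classical result (references [GJ77], [ER84], and Dolgopyat--Sarig [DS23, Theorem A.1]) and the paper's appendix then \emph{applies} it to deduce the moderate deviation principle. So there is no ``paper's own proof'' to compare against; what you have written is a blind proof of the cited theorem itself.

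That said, your proposal is essentially a correct sketch of the standard Gärtner--Ellis argument, and it is worth recording a few points of care that a complete write-up would need. First, your covering of $F\cap\{\mathcal{I}\le L\}$ by half-lines and the exponential-tightness limit $L\to\infty$ is more machinery than the one-dimensional setting requires: because $\mathcal{F}$ is finite everywhere, $\mathcal{I}$ is a good convex rate function with unique minimizer $\eta_0=\mathcal{F}'(0)$, increasing on $[\eta_0,\infty)$ and decreasing on $(-\infty,\eta_0]$, so the closed-set bound follows immediately by splitting $F$ at $\eta_0$ and applying the two one-sided Chernoff bounds -- no covering or truncation is needed. Second, in the lower bound you write ``Let $\xi^*$ be the unique point with $\mathcal{F}'(\xi^*)=\eta$'' for a generic $\eta\in G$; strictly speaking such $\xi^*$ exists only when $\eta$ lies in the open interval $\mathrm{range}(\mathcal{F}')=\mathrm{int}(\mathrm{dom}\,\mathcal{I})$. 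You must first observe that $\inf_G\mathcal{I}=\inf_{G\cap\mathrm{int}(\mathrm{dom}\,\mathcal{I})}\mathcal{I}$ (which holds for open $G$ by continuity of $\mathcal{I}$ on the interior of its domain), and then run the tilting argument only for such $\eta$. Third, when you invoke ``the already-proved upper bound'' under the tilted laws $\mathbb{Q}_t$, you are applying it to a family of reference measures that itself varies with $t$; this is fine, but it should be noted explicitly that the Chernoff bound and the limit $\tfrac{1}{b(t)}\log\mathbb{E}_{\mathbb{Q}_t}[e^{\xi W_t}]\to\mathcal{F}(\xi+\xi^*)-\mathcal{F}(\xi^*)$ are all that is used, so the argument is self-contained rather than an application of the statement verbatim. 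With these clarifications the proof is correct; note that in the paper's actual application $\mathcal{F}(\xi)=\tfrac12\sigma_q^2\xi^2$ is quadratic, so $\mathrm{dom}\,\mathcal{I}=\mathbb{R}$ and the second caveat is vacuous there.
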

  In our case, $(S^*M,\mu_{L})$ is a probability space, for $t>0$, let $q_{t}(\rho)=\int_{0}^{t}q \circ \varphi^{s}ds-t\overline{q}$ be a family of random variables 
   and we define
  $$
  Z_{t}(\xi):=\mathbb{E}\left(\mathrm{e}^{\xi q_t}\right)= \int_{S^*M}e^{\xi q_{t}(\rho)}d\mu_{L}(\rho)<\infty, \quad F_{t}(\xi)=\frac{1}{t}\log Z_{t}(\xi).
  $$
  In the proof of LDP, see Kifer \cite{KY}, and Young \cite{LSY}, show that for any $\xi \in \mathbb{R}$, $\lim\limits_{t \to \infty} F_{t}(\xi)$ exists, we denote it by $F(\xi)$, and we have:
  $$F(\xi):=\lim\limits_{t \to \infty}F_{t}(\xi)=\mathrm{Pr}(\xi(q-\overline{q})).$$
  In the following, we will show that for a sufficiently large $t_{0}>0$, $\{F_{t}(\xi)\}_{t\geq t_{0}}$ is a normal family of holomorphic functions on a neighbourhood of $\xi$.
  \begin{proposition}\label{normalfamily}
  There exists $\delta>0$, $t_{0}>0$ and $C>0$ such that for any $t\geq t_{0}$ and $|\xi|\leq \delta$, we have
  \begin{equation}
  |F_{t}(\xi)|\leq C.
  \end{equation}
  It implies that $F(\xi)$ has an analytic continuation on $D(0,\delta)$, and for any $n$-th derivative, $\lim\limits_{t \to \infty} F^{(n)}_{t}(\xi) \to F^{(n)}(\xi)$ uniformly on any compact subset of $D(0,\delta)$.
  \end{proposition}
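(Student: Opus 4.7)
The plan is to prove uniform boundedness of $F_t$ on a complex disk $D(0,\delta)$ via a spectral decomposition of the twisted transfer semigroup of the geodesic flow, then invoke Vitali's theorem to upgrade real-axis convergence to holomorphic convergence on $D(0,\delta)$. Let $\mathcal{L}^t$ denote the transfer operator semigroup of $\varphi^t$ acting on the anisotropic Banach space $\mathcal{B}$ on which Tsujii's theorem \ref{transfer} applies, and consider the twisted transfer operator
\begin{equation*}
\mathcal{L}_{\xi}^{t} f(\rho) := e^{\xi \int_0^t (q - \overline{q}) \circ \varphi^s(\rho) \, ds}\, (\mathcal{L}^t f)(\rho), \qquad \xi \in \mathbb{C}.
\end{equation*}
A change of variables using the $\varphi^t$-invariance of $\mu_L$ yields $Z_t(\xi) = \int_{S^*M} \mathcal{L}_{\xi}^{t} \mathbf{1} \, d\mu_L$. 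Since multiplication by the smooth function $q - \overline{q}$ is bounded on $\mathcal{B}$, the family $\xi \mapsto \mathcal{L}_{\xi}^{t}$ is holomorphic, and the generators $\mathcal{A}_\xi = \mathcal{A}_0 + \xi(q - \overline{q})$ form an analytic family of type (A) in the sense of Kato.

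By Tsujii's theorem, $\mathcal{A}_0$ has $0$ as a simple isolated eigenvalue (with eigenfunction the Liouville density, by ergodicity of $\varphi^t$) separated from the rest of its spectrum by a strip of width $\gamma > 0$. Analytic perturbation theory then furnishes $\delta > 0$, a holomorphic leading eigenvalue $\mu(\xi)$ with $\mu(0) = 0$, and a holomorphic rank-one spectral projector $\Pi(\xi)$ on $D(0,\delta)$, giving the uniform decomposition
\begin{equation*}
\mathcal{L}_{\xi}^{t} = e^{t\mu(\xi)} \Pi(\xi) + R_{\xi}^{t}, \qquad \|R_{\xi}^{t}\|_{\mathcal{B} \to \mathcal{B}} = \mathcal{O}\bigl(e^{t(\Re \mu(\xi) - \gamma/2)}\bigr).
\end{equation*}
Pairing against $\mu_L$ yields $Z_t(\xi) = e^{t\mu(\xi)}\bigl(c(\xi) + \mathcal{O}(e^{-\gamma t/2})\bigr)$, where $c(\xi) := \int_{S^*M} \Pi(\xi)\mathbf{1} \, d\mu_L$ is holomorphic on $D(0,\delta)$ with $c(0) = 1$; shrinking $\delta$ if necessary keeps $c(\xi)$ bounded and bounded away from zero.

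Taking logarithms and dividing by $t$ gives $F_t(\xi) = \mu(\xi) + t^{-1}\log c(\xi) + \mathcal{O}(t^{-1} e^{-\gamma t/2})$, so that $|F_t(\xi)| \leq C$ uniformly for $t \geq t_0$ and $\xi \in D(0,\delta)$. This uniform bound makes $\{F_t\}_{t \geq t_0}$ a normal family on $D(0,\delta)$; combined with the already established pointwise limit $F_t(\xi) \to F(\xi) = \mathrm{Pr}(\xi(q - \overline{q}))$ on the real segment $(-\delta,\delta)$, Vitali's convergence theorem implies $F_t \to \mu$ uniformly on compact subsets of $D(0,\delta)$, so that $F$ admits the claimed holomorphic extension, and Cauchy's integral formula delivers uniform convergence of every derivative on compact subsets. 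The main obstacle is the spectral input: one must verify that multiplication by $q - \overline{q}$ preserves $\mathcal{B}$ and is bounded there, and that $0$ is algebraically simple (not merely an isolated eigenvalue of higher multiplicity) for $\mathcal{A}_0$ — both follow from the ergodicity of $\varphi^t$ with respect to $\mu_L$ together with the standard properties of the Faure-Sjöstrand type anisotropic spaces underlying Tsujii's theorem.
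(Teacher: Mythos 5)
Your proposal is correct in spirit and reaches the same conclusion, but it takes a genuinely different route: you perturb the \emph{generator} $\mathcal{A}_\xi = X + \xi(q-\overline{q})$ of the twisted semigroup, whereas the paper fixes a discrete time $t_1$, applies Kato's analytic perturbation theory to the single bounded operator $\mathcal{L}^{t_1}_{\xi}$, and then iterates ($Z_t(\xi)=\int \mathcal{L}^{nt_1}_\xi f\,d\mu_L$ with $n=[t/t_1]$, $f=\mathcal{L}^{t-nt_1}_\xi \mathbf{1}$, and $(\mathcal{L}^{t_1}_\xi(I-\Pi_\xi))^n=\mathcal{L}^{nt_1}_\xi-\lambda(\xi)^n\Pi_\xi$). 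The paper also uses Montel rather than Vitali, but that's cosmetic; the substantive divergence is discrete-time versus continuous-time.

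The weak link in your version is the sentence claiming that \emph{by Tsujii's theorem} the generator $\mathcal{A}_0$ has $0$ as a simple isolated eigenvalue separated from the rest of its spectrum by a strip of width $\gamma>0$. Tsujii's theorem as quoted (Theorem \ref{spectralgapfortransfer}) is a statement about the essential spectral radius of the time-$t$ transfer operator $\mathcal{L}^t$, not about the spectrum of its generator; converting one into the other is not automatic because the spectral mapping theorem can fail for $C_0$-semigroups. One can in fact recover a spectral gap for $\mathcal{A}_0$ from quasi-compactness of $\mathcal{L}^{t_1}$ for a single $t_1>0$ via a spectral-mapping argument for the isolated part of the spectrum, or one can invoke Liverani's exponential mixing result for contact Anosov flows directly; but this extra step needs to be said. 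Once the generator gap is in hand, your subsequent steps — type (A) analyticity since $q-\overline{q}$ is a bounded multiplier on the anisotropic space, the holomorphic family $(\mu(\xi),\Pi(\xi))$, the decomposition $\mathcal{L}^t_\xi = e^{t\mu(\xi)}\Pi(\xi)+R^t_\xi$, pairing against $\mu_L$, and then Vitali plus Cauchy for derivatives — are all sound, though the uniformity of the remainder estimate in $\xi$ also deserves a word (it follows from upper semicontinuity of the isolated spectrum under the bounded perturbation). The paper's choice to work with the bounded operator at a fixed $t_1$ sidesteps all of the generator subtleties and requires only classical Kato perturbation theory for bounded operators, which is why it is the more economical route here.
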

  First, we introduce the Tsujii's theorem \cite[Theorem 1.1]{TM10} which showed that the quasi-compactness of transfer operators for contact Anosov flows, and the geodesic flow on cosphere bundle is contact Anosov flow.
  \begin{theorem}[Tsujii, \cite{TM10}]\label{spectralgapfortransfer}
    We assume that $M$ is a compact Anosov manifold, for each $\beta>0$, there exists a Hilbert space $B^\beta$ with inner product $\left(-,-\right)_{B^{\beta}}$, which contains the Sobolev space $H^s(S^*M)$ and is contained in the dual space $H^{-s}(S^*M)$ for $s>\beta$, such that the transfer operator $\mathcal{L}^t$ which is defined as
    \begin{equation}\label{transfer}
    \mathcal{L}^{t}f:=f\circ \varphi^{t}, \quad f\in C^{\infty}(S^*M,\mathbb{C}).
    \end{equation}
    for sufficiently large $t$ extends to a bounded operator on $B^\beta$ and the essential spectral radius of the extension $\mathcal{L}^t: B^\beta \rightarrow B^\beta$ is bounded by $e^{-\sigma t}$, where $\sigma>0$.
    \end{theorem}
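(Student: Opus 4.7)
The plan is to construct, for each $\beta > 0$, an anisotropic Hilbert space $B^\beta$ adapted to the hyperbolic splitting on which a Lasota--Yorke inequality for the transfer operator $\mathcal{L}^t$ holds, and then to invoke Nussbaum's formula to extract the essential spectral radius bound. First I would define $B^\beta$ via a semiclassical wave-packet (FBI) decomposition: using a microlocal partition of unity on $T^*(S^*M)$ subordinate to a cone decomposition transverse to the unstable cotangent bundle, one assigns an anisotropic weight of order $e^{\beta}$ along the stable cotangent directions and $e^{-\beta}$ along the unstable cotangent directions. The symplectic lift of $\varphi^t$ contracts the unstable cotangent cone and expands the stable one, so the weighted norm decays along the lifted dynamics on the microlocally non-flow parts. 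This choice ensures $H^s(S^*M) \subset B^\beta \subset H^{-s}(S^*M)$ for $s > \beta$ and, via an Egorov-type argument, boundedness of $\mathcal{L}^t$ on $B^\beta$.

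The next step is to prove a Lasota--Yorke estimate
\[
\|\mathcal{L}^t f\|_{B^\beta} \leq C_1 e^{-\sigma t}\|f\|_{B^\beta} + C_2 \|f\|_{H^{-s}(S^*M)}
\]
for some $\sigma > 0$ and $t$ sufficiently large. The first term captures the hyperbolic gain at high frequency: the Anosov expansion rate along $E^+$ (or, dually, the contraction on the conormal cone) produces exponential decay of the anisotropic weights along orbits. The second, compact, remainder absorbs the low-frequency content through the compact embedding $B^\beta \hookrightarrow H^{-s}(S^*M)$. Combining this inequality with Hennion's theorem (Nussbaum's formula for the essential spectral radius) immediately yields $r_{\mathrm{ess}}(\mathcal{L}^t : B^\beta \to B^\beta) \leq C_1 e^{-\sigma t}$.

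The hard part, and the step where the contact structure enters essentially, is establishing that $\sigma$ can be taken strictly positive. This is a Dolgopyat-style transversality estimate for the oscillatory integrals that arise when decomposing $\mathcal{L}^t$ in frequency space: the non-integrability of $\mathbb{R} X \oplus E^+$ and $\mathbb{R} X \oplus E^-$ enforced by the contact form $\alpha$ produces phase cancellations along orbits and therefore exponential off-diagonal decay after microlocalisation. Without the contact hypothesis only a polynomial bound would be available, so the geometric input of the contact structure is the genuine obstacle; once the transversality estimate is in hand, the construction of $B^\beta$ and the Lasota--Yorke inequality proceed by fairly routine semiclassical calculus.
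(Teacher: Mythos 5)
This result is not proved in the paper at all: Theorem~\ref{spectralgapfortransfer} is imported verbatim as Theorem~1.1 of Tsujii \cite{TM10}, and the text explicitly presents it as a citation (``we introduce Tsujii's theorem\ldots''). There is therefore no ``paper's own proof'' to compare your sketch against; the paper uses the quasi-compactness of $\mathcal{L}^t$ on $B^\beta$ as a black box to set up the perturbation theory and Montel-type argument in Proposition~\ref{normalfamily}. Your roadmap (anisotropic Hilbert space via a conic, wave-packet decomposition; Lasota--Yorke plus Hennion/Nussbaum; Dolgopyat-type cancellation from the contact form) does track the broad architecture of Tsujii's argument, so as a summary of where \cite{TM10} comes from it is serviceable.

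That said, the sketch is internally circular on the one point that matters, and it badly undersells the remaining work. You first \emph{assert} a Lasota--Yorke inequality $\|\mathcal{L}^t f\|_{B^\beta} \le C_1 e^{-\sigma t}\|f\|_{B^\beta} + C_2\|f\|_{H^{-s}}$ with $\sigma>0$, and only afterwards declare that ``the hard part\ldots is establishing that $\sigma$ can be taken strictly positive'' --- but that is exactly what the displayed inequality already presupposes. Because the time-$t$ map of a flow has a neutral center (the flow direction and its conormal dual in $T^*(S^*M)$), a naive anisotropic weight produces no exponential gain along that one-dimensional cone; the Dolgopyat/non-integrability mechanism is what supplies the decay there, and it must be built into the proof of the Lasota--Yorke inequality itself, not appended as a second step. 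Moreover, the transversality/oscillatory-integral estimate you label as ``the genuine obstacle'' and then set aside is, in effect, the entire content of Tsujii's paper; describing the remainder as ``fairly routine semiclassical calculus'' is not a fair accounting of the difficulty. As a blind proof proposal this would not suffice --- it is an outline of a citation, with the one nontrivial step deferred --- but since the paper itself does not attempt a proof, the honest conclusion is simply that the statement should be treated as an external input, as the paper does.
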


  We notice that for any $s>\beta$, $H^{s}\subset B^{\beta}\subset H^{-s}$, then for any $f \in B^{\beta}$, we have $\left|\int_{S^*M}f(x)d\mu_{L}(x)\right|\leq \|f\|_{H^{-s}}\| 1 \|_{H^{s}}$. Therefore, there exists a constant $C_{s,\beta}>0$ such that 
  \begin{equation}\label{eq76}
    \left|\int_{S^*M}f(x)d\mu_{L}(x)\right|\leq C_{s,\beta}\|f\|_{B^{\beta}}.
  \end{equation}
  Here the essential spectral radius of a bounded operator $\mathcal{L}$ on a Banach space $\mathcal{B}$ is the infimum of the real numbers $\rho>0$ so that, outside of the disc of radius $\rho$, the spectrum of $\mathcal{L}$ on $\mathcal{B}$ consists of isolated eigenvalues of finite multiplicity. 
  
  Recall that $\lambda=1$ is the simple eigenvalue of $\mathcal{L}^{t}$ by the ergodicity of $\varphi^{t}$, and there is no eigenvalue on the region $\{\lambda \ | \ |\lambda|\geq 1\}\setminus \{1\}$ of $\mathcal{L}^{t}$ by the mixing of $\varphi^{t}$. Let $\Pi$ be the orthogonal projection on $B^{\beta}$ such that $\Pi f=(f,1)_{B^{\beta}}$. Thus, we obtain that there exists a $t_{1}>0$ and $0<\sigma_{1}<1$ such that for any $f \in B^{\beta}$ and $t\geq t_{1}$, the spectral radius of $\mathcal{L}^{t_{1}}-\Pi$ satisfies $$\rho\left(\mathcal{L}^{t_{1}}-\Pi\right)<\sigma_{1}.$$ 
  We recall that the spectral radius of bounded operator $\mathcal{L}$ is defined as 
  $$\rho(\mathcal{L})=\lim\limits_{n\to \infty}\|\mathcal{L}^{n}\|^{\frac{1}{n}}.$$
  Now we prove Proposition \ref{normalfamily}.
  \begin{proof}
  Since $q_{t} \in C^{\infty}(S^*M) \subset B^{\beta}$, the multiplication operator $M_{q_{t}}f:=q_{t}f$ on $B^{\beta}$ is bounded operator. We define a strongly continuous group $\mathcal{L}^{t}_{\xi}(f)=e^{\xi q_{t}}f \circ \varphi^{t}$ on $B^{\beta}$. This family of bounded operators is holomorphically depending on $\xi \in \mathbb{C}$. Now we consider the $t=t_{1}$, then by Kato's theorems \cite[Chapter 7, Theorem 1.7 and 1.8]{KT66}, for sufficiently small $|\xi|$, $\mathcal{L}^{t_{1}}_{\xi}$ has the simple eigenvalue $\lambda(\xi)$ with largest modulus and $\Pi_{\xi}$ is the projection to the eigenfunction associated with $\lambda(\xi)$ of $\mathcal{L}^{t_{1}}_{\xi}$. We notice that $\lambda(\xi)$ and $\Pi_{\xi}$ are both continuous on a neighbourhood of $0$. Then there exist $\delta_{0}>0$ and $\epsilon>0$ such that for any $\xi \in \overline{D(0,\delta_{0})}$,
  \begin{equation}\label{eq78}
  \rho\left( \mathcal{L}^{t_{0}}_{\xi}(I-\Pi_{\xi}) \right)<\sigma_{1}<1-2\epsilon<1-\epsilon<|\lambda(\xi)|<1+\epsilon, \quad \|\Pi_{\xi}-\Pi\|<\epsilon.
  \end{equation}

  Recall that $\Pi_{\xi}$ commutes with $\mathcal{L}^{t_{1}}_{\xi}$, then we have
  \begin{equation}\label{eq77}
  \left(\mathcal{L}^{t_{1}}_{\xi}(I-\Pi_{\xi})\right)^n=\mathcal{L}^{nt_{1}}_{\xi}(I-\Pi_{\xi})=\mathcal{L}^{nt_{1}}_{\xi}-\lambda(\xi)^{n}\Pi_{\xi}.
  \end{equation}
  For any $t\geq t_{1}$, let $n:=[t/t_{1}]$, and $r_{1}:=t-nt_{1} \in [0,t_{1}]$ and we define $$f(\rho)=f_{t,\xi}(\rho)=\mathcal{L}^{r_{1}}_{\xi}(1)(\rho)=e^{\xi q_{r_{1}}(\rho)}, \quad \rho \in S^*M.$$ Then we have
  $$Z_{t}(\xi)=\int_{S^*M} \mathcal{L}^{t}_{\xi}(1)(\rho)d\mu_{L}(\rho)=\int_{S^*M} \mathcal{L}^{nt_{0}}_{\xi}(f)(\rho)d\mu_{L}(\rho).$$
  By \eqref{eq76} and \eqref{eq77}, we have
  \begin{equation}\label{eq79}
  \left|\int_{S^*M} \left(\mathcal{L}^{t_{1}}_{\xi}(I-\Pi_{\xi})\right)^{n}(f)(\rho)d\mu_{L}(\rho)\right|\leq C_{s,\beta}\|\left(\mathcal{L}^{t_{1}}_{\xi}(I-\Pi_{\xi})\right)^{n}f\|_{B^{\beta}}.
  \end{equation}
  For sufficiently large $n_{1}$, for any $n\geq n_{1}$, by \eqref{eq78}, we have
  $$\|\left(\mathcal{L}^{t_{1}}_{\xi}(I-\Pi_{\xi})\right)^{n}\|^{\frac{1}{n}}<\sigma_{1}+\epsilon.$$
  We notice that there exists $C>0$ such that for any $|\xi|\leq \delta_{0}$ and $t\geq t_{0}$, $\|f\|_{B^{\beta}}\leq C$. By \eqref{eq77} and \eqref{eq79}, we have
  \begin{equation*}
  \left|Z_{t}(\xi)-\lambda(\xi)^{n}\int_{S^*M}\Pi_{\xi}(f)(\rho)d\mu_{L}(\rho)\right|\leq C_{s,\beta}C(\sigma_{1}+\epsilon)^{n}.
  \end{equation*}
  There exists a $\delta \in (0,\delta_{0})$ sufficiently small, such that for any $|\xi|\leq \delta$,
  $$\left|\int_{S^*M}\Pi_{\xi}(f)(\rho)d\mu_{L}(\rho)-1\right|<\epsilon.$$
  Thus we have
  \begin{equation}\label{eq80}
  \frac{1}{t}\log |Z_{t}(\xi)|-\frac{[t/t_{1}]}{t}\log |\lambda (\xi)|=\mathcal{O}\left(\frac{1}{t}\right), \quad \text{uniformly for } \ |\xi|\leq \delta.
  \end{equation}
  Since $D(0,\delta)$ is a simply connected region, we choose a principal value of $\log Z_{t}(\xi)$ such that it is a holomorphically on $D(0,\delta)$, and $|\log Z_{t}(\xi)|\leq |\log |Z_{t}(\xi)||+2\pi$. By Montel's theorem, $\{F_{t}(\xi)=\frac{1}{t}\log Z_{t}(\xi)\}_{t}$ is a normal family, then it has a subsequence which uniformly convergent on each compact subset of $D(0,\delta)$. By \eqref{eq80}, for $\xi \in \mathbb{R}$, we have $\lim\limits_{t \to \infty}F_{t}(\xi) \to \log |\lambda(\xi)|/t_{1}.$ It implies that for any limit function $g(\xi)$ of the normal family $\{F_{t}(\xi)\}_{t\geq t_{0}}$, $g(\xi)=\log |\lambda(\xi)|/t_{1}$. So we prove that $F_{t}(\xi)$ is uniformly convergent to $g$ as $t\to \infty$, then also their any order derivatives. We obtain that $g(\xi)=F(\xi)=\mathrm{Pr}(\xi(q-\overline{q}))$ on $D(0,\delta)$, thus we prove that for any $n\geq 0$, $$F_{t}^{(n)}(\xi) \to F(\xi), \quad \text{uniformly for } \ t\geq t_{0}, \ |\xi|\leq \delta.$$
  \end{proof}
  Now we prove Theorem \ref{MDPA}. Let $a(t)$ be a family of positive real numbers such that $\lim\limits_{t \rightarrow \infty} a(t)=\infty$ and $\lim\limits_{t \rightarrow \infty} a(t)t^{-\frac{1}{2}}=0$. Then we define $b(t)=\frac{t}{a(t)^2}$, and $W_{t}=\frac{q_{t}}{a(t)}$,
  $$\mathcal{F}_{t}(\xi)=\frac{1}{b(t)}\log \mathbb{E}(\xi W_{t})=\frac{a(t)^2}{t}\log \mathbb{E}\left(\frac{\xi}{a(t)} q_{t}\right)=a(t)^2F_{t}\left(\frac{\xi}{a(t)}\right).$$ 
  For any $\eta \in [-\delta/2,\delta/2]$, there exists a $\eta^{\prime} \in \mathbb{R}$ with $|\eta^{\prime}|\leq \eta \leq \delta/2$, we have $$F_{t}(\eta)=F_{t}(0)+F_{t}^{(1)}(t)(0)\eta+\frac{1}{2}F_{t}^{(2)}(0)\eta^2+\frac{1}{6}F_{t}^{(3)}(\eta^{\prime})\eta^3.$$ 
  We note that for any $|\eta|\leq \delta/2$ and $t\geq t_{0}$, there exists a $C^{\prime}>0$ such that $|F^{(3)}_{t}(\eta)|\leq C^{\prime}$ by the uniformly convergence of $\{F^{(3)}_{t}\}$. By the definition of $F_{t}(\xi)$, we have $F_{t}(0)=0$, $F_{t}^{(1)}(0)=0$. Thus for any $R>0$, we take $t^{\prime} \gg t_{0}$ such that $R/a(t^{\prime}) \leq \delta/2$, for any $|\xi|\leq R$, $t\geq t^{\prime}$, as $t \to \infty$, we have
  $$\mathcal{F}_{t}(\xi)=a(t)^2F_{t}\left(\frac{\xi}{a(t)}\right)=\frac{1}{2}F_{t}^{(2)}(0)\xi^2+\mathcal{O}_{R}\left(\frac{1}{a(t)}\right).$$
  So we prove the assumption in the Gärtner-Ellis Theorem \ref{GETLDP}, i.e. for any $\xi \in \mathbb{R}$, we have 
  $$\mathcal{F}(\xi):=\lim_{t\to \infty}\mathcal{F}_{t}(\xi)=\frac{1}{2}F^{(2)}(0)\xi^2.$$
  By the definition of $F_{t}(\xi)$, we have
  $$F_{t}^{(2)}(0)=\frac{1}{t}\int_{S^*M}q_{t}^{2}(\rho)d\mu_{L}(\rho).$$
  Since we assume that $q$ is not cohomologous to $\overline{q}$, by \eqref{limitofvariance}, we have $F^{(2)}(0)=\sigma_{q}^2>0$. So we find that $\mathcal{F}(\xi)=\frac{1}{2}\sigma_{q}^2\xi^2$ is a smooth and strcitly convex function. Then the Legendre-Fenchel transform of $\mathcal{F}(\xi)$ is 
  $$\mathcal{I}(\eta)=\sup_{\xi \in \mathbb{R}}\left\{\xi\eta-\frac{1}{2}\sigma_{q}^{2}\xi^2\right\}=\frac{\eta^2}{2\sigma_{q}^2}.$$
  By Gärtner-Ellis Theorem \ref{GETLDP}, we prove the Theorem \ref{MDPA}.

\bibliographystyle{amsalpha}
\bibliography{article.bib}
  
\end{document}